\documentclass[a4paper,11pt,reqno]{amsart}
\usepackage{amsmath}
\usepackage{relsize}
\usepackage[noadjust]{cite}
\usepackage{color}
\usepackage{graphicx}
\usepackage[dvipsnames]{xcolor}

\usepackage{hyperref, enumitem}
\hypersetup{
  colorlinks   = true, 
  urlcolor     = blue, 
  linkcolor    = Purple, 
  citecolor   = black 
}
\usepackage{amsmath,amsthm,amssymb}
\usepackage{setspace}
\setstretch{1.03}
\usepackage{enumitem}
\setitemize{itemsep=-1pt}
\setenumerate{itemsep=-1pt}
\usepackage[margin=2.5cm]{geometry}

\usepackage{stmaryrd}
\usepackage{longtable}

\usepackage{array}
\newcolumntype{x}[1]{>{\centering\arraybackslash\hspace{0pt}}p{#1}}

\theoremstyle{definition}
\newtheorem{theorem}{Theorem}[section]
\newtheorem{definition}[theorem]{{{Definition}}}
\newtheorem{example}[theorem]{{{Example}}}
\newtheorem{notation}[theorem]{{{Notation}}}
\newtheorem{remark}[theorem]{{{Remark}}}

\newtheorem{corollary}[theorem]{{{Corollary}}}
\newtheorem{proposition}[theorem]{{{Proposition}}}
\newtheorem{lemma}[theorem]{{{Lemma}}}


\newcommand{\numberset}{\mathbb}
\newcommand{\N}{\numberset{N}}

\newcommand{\Z}{\numberset{Z}}
\newcommand{\ZZ}{\numberset{Z}}
\newcommand{\Q}{\numberset{Q}}
\newcommand{\R}{\numberset{R}}

\newcommand{\F}{\numberset{F}}

\newcommand{\T}{\mathcal{T}}
\newcommand{\mA}{\mathcal{A}}
\newcommand{\C}{\mathcal{C}}
\newcommand{\mC}{\mathcal{C}}
\newcommand{\G}{\mathcal{G}}
\newcommand{\mP}{\mathcal{P}}
\newcommand{\B}{\mathcal{B}}
\newcommand{\M}{\mathcal{M}}

\newcommand{\mU}{\mathcal{U}}
\newcommand{\U}{\mathcal{U}}

\newcommand{\kL}{\mathcal{L}}
\newcommand{\X}{\mathcal{X}}

\newcommand{\mW}{\mathcal{W}}

\newcommand{\mH}{\mathcal{H}}

\newcommand{\Fq}{\F_q}

\newcommand{\colsp}{\textnormal{colsp}}
\newcommand{\col}{\textnormal{colsp}}

\newcommand{\mL}{\mathcal{L}}

\newcommand{\Fqm}{\mathbb{F}_{q^m}}

\newcommand{\pp}{\mathbb{P}}

\newcommand\qbin[3]{\left[\begin{matrix} #1 \\ #2 \end{matrix} \right]_{#3}}

\newcommand{\mS}{\mathcal{S}}

\newcommand{\h}{\textnormal{h}}
\newcommand{\Tr}{\textnormal{Tr}}

\newcommand{\rk}{\textnormal{rk}}

\newcommand{\crit}{\textnormal{crit}}


\newcommand{\fqm}{\mathbb{F}_{q^m}}

\DeclareMathOperator{\zero}{\mathbf{0}}
\DeclareMathOperator{\one}{\mathbf{1}}
\DeclareMathOperator{\GL}{GL}
\DeclareMathOperator{\supp}{supp}

\DeclareMathOperator{\PG}{PG}
\DeclareMathOperator{\cl}{cl}

\DeclareMathOperator{\lker}{lker}

\DeclareMathOperator{\rowsp}{rowsp}



\title{Recursive properties of the characteristic polynomial of weighted lattices}
 \usepackage[foot]{amsaddr}
 \author{Gianira N. Alfarano$^1$}
 \author{Eimear Byrne$^2$}
 \address{$^1$Université de Rennes, France.}
 \address{$^2$University College Dublin, Ireland.}
 \email{gianira-nicoletta.alfarano@univ-rennes.fr, ebyrne@ucd.ie}

\begin{document}

\begin{abstract}
 In this paper, we describe properties of the characteristic polynomial of a weighted lattice and show that it has a recursive description, which we use to obtain results on the critical exponent of $q$-polymatroids. We give a Critical Theorem for representable $q$-polymatroids and we provide a lower bound on the critical exponent. We show that $q$-polymatroids arising from certain families of rank-metric codes attain this lower bound.
\end{abstract}

\maketitle

\section{Introduction}
The characteristic polynomial is a well-studied invariant of matroid theory and its generalisations, which has been used to establish duality theorems for codes, matroids and $q$-polymatroids; see \cite{britzshiromotomacwill,byrneweighted,greene}. In the case of matroids, the characteristic polynomial is a generalised Tutte-Grothendieck (T-G) invariant and can be obtained by taking evaluations of the Tutte and rank generating polynomials; see~\cite{bryl_oxley}. The characteristic polynomial of a {\em weighted lattice} and its connection to critical problems was studied by Whittle in \cite{whittle,whittlecriticalpoly}, thus unifying the theory for matroids, polymatroids, geometric lattices and graded lattices. 

Critical problems concerning these structures are highly related to the characteristic polynomial.
The critical exponent of a collection $\mS$ of $n$ points in the projective space $\PG(k-1,q)$ is defined to be the minimum number of hyperplanes whose intersection with $\mS$ is empty. A reformulation of this definition from the point of view of coding theory can be read in \cite{greene}: the critical exponent of the matroid arising from a code $C\leq \F_q^n$ is the minimum dimension of a subcode of $C$ whose support is $\{1,\ldots,n\}$. 
The critical problem for a representable matroid is the determination of its critical exponent and is strongly related to the celebrated MDS conjecture; see \cite{segre1955curve}. 
In \cite{CrapoRota}, Crapo and Rota proved that the critical exponent of a collection of vectors or projective points is actually equal to an evaluation of the characteristic polynomial of the matroid represented by those points, which is a result known as the Critical Theorem. This theorem provides a more general setting for many results in extremal combinatorics
 and has been the subject of numerous applications and generalisations; see \cite{alon,britz2005extensions,bryl_oxley,tutte_1954,zaslavsky1987mobius,koga+,IMAMURA,gruica2022rank, imamura2023}.

 In this paper, we consider an extension of these topics. 
 We consider the characteristic polynomial of a weighted lattice and study its recursive properties in relation to the characteristic polynomials of its minors. Unlike the approach of Whittle in \cite{whittle}, the minors we consider retain the weighted lattice properties of the original weighted lattice. In exchanging the role of the Boolean lattice for an arbitrary lower semimodular lattice~$\mL$ in the theory of polymatroids we arrive at $\mL$-polymatroids. In the special case that $\mL$ is the lattice of subspaces of a vector space over $\F_q$, we have a $q$-polymatroid. Such objects have many connections to rank-metric codes. Such expressions can be used to obtain results on the critical exponent of a representable $q$-polymatroid, which is one that arises from a matrix code. 
 In specialising these results to the Boolean lattice, one recovers familiar identities for matroids and polymatroids; see e.g.~\cite{asano,bryl_oxley}. 
  We provide a lower bound on the critical exponent of a representable $q$-polymatroid and identify some classes of matrix codes that meet this lower bound. 
 
 This paper is organised as follows. In section \ref{sec:preliminaries}, we provide the necessary background material on lattices, weighted lattices, $q$-polymatroids and rank-metric codes. In section~\ref{sec:charpoly}, we derive expressions of the characteristic polynomial ${\mathbb P}(\mW;z)$ of a weighted lattice $\mW$ in terms of characteristic polynomials of the minors of $\mW$. We furthermore establish criteria for the positivity of evaluations of the characteristic polynomials of some of the minors of $\mW$ in terms of evaluations of ${\mathbb P}(\mW;z)$.
 In section~\ref{sec:crit_theorem}, we give a $q$-analogue of the Critical Theorem and hence define the critical exponent of a representable $q$-polymatroid. We describe a geometric interpretation of the critical exponent for vector rank-metric codes. We apply the results of section~\ref{sec:charpoly} to obtain $q$-analogues of results on critical exponents such as those found in \cite{asano}.
 In section~\ref{sec:lower_bound}, we give a lower bound on the critical exponent of a representable $q$-polymatroid and we give sufficient conditions that ensure this bound is met with equality for certain families of codes. In section~\ref{sec:gen_britz}, we consider the $q$-analogue of several generalisations of the classical Critical Theorem, such as those given in \cite{britz2005extensions}.

\medskip

\paragraph{\textbf{Notation}}
    Throughout this paper, $q$ is a prime power, while $n$ and $m$ are integers satisfying $n,m\geq 2$. We denote by $\F_q$ the finite field with $q$ elements and by $\F_{q^m}$ its field extension of degree $m$.
    We let $E$ denote an $n$-dimensional vector space over~$\F_q$ and we let $\{e_1,\ldots,e_n\}$ denote the standard basis of $E$. We let $\F_{q}^{n \times m}$ denote the space of $n\times m$ matrices with entries in $\F_q$. For a matrix $M\in\F_q^{n\times m}$ we denote by $\colsp(M)$ the column-space of $M$ over~$\F_q$.
    For any positive integer $s$, we define $[s]:=\{1,\dots,s\}$.

\section{Preliminaries}\label{sec:preliminaries}

In this section, we introduce $q$-polymatroids, rank-metric codes and we collect some well-known facts about lattices.

\subsection{Lattices}
In this short subsection we recall some preliminary results on lattices. 
We refer the reader to~\cite{birkhoff1940lattice,romanlattice,stanley2011enumerative} for further reading on ordered lattices.

\begin{definition}
	Let $(\mathcal{L}, \leq)$ be a partially ordered set (poset). Let $a,b,v \in \mathcal{L}$.
	We say that $v$ is an \textbf{upper bound} of $a$ and $b$ if $a\leq v$ and $b\leq v$ and furthermore, we say	that $v$ is a \textbf{least upper bound} of $a$ and $b$ if $v \leq u$ for any $u \in  \mathcal{L} $ that is also an upper bound of $a$ and $b$.  
	If a least upper bound of $a$ and $b$ exists, then it is unique, is denoted by $a\vee b$, which is called the \textbf{join} of $a$ and $b$.
	We analogously define \textbf{a lower bound} and \textbf{the greatest lower bound} of $a$ and $b$ and denote the unique greatest lower bound of $a$ and $b$ by $a\wedge b$, which is called the \textbf{meet} of $a$ and $b$.
    The poset $\mathcal{L}$ is called a \textbf{lattice} if each pair of elements has a least upper bound and a greatest lower bound and it is denoted by $(\mathcal{L}, \leq, \vee, \wedge)$. 
    An element in $\mL$ that is not smaller than any other element is called \textbf{maximal} element of $\mL$ and it is denoted by $\boldsymbol{1}_{\mL}$ and an element that is not bigger than any other element is called \textbf{minimal} element of $\mL$ and it is denoted by~$\boldsymbol{0}_{\mL}$. If there is no risk of confusion, we simply write $\boldsymbol{1}$ and $\boldsymbol{0}$.
\end{definition}

\begin{definition}
    For $i \in \{1,2\}$, let $(\mathcal{L}_i, \leq_i, \vee_i, \wedge_i)$ be a lattice. 
    A bijection $\phi:\mL_1 \longrightarrow \mL_2$ is~called 
    \begin{enumerate}
        \item a \textbf{lattice isomorphism} if
    $\phi(a \vee_1 b) = \phi(a) \vee_2 \phi(b)$ and $\phi(a\wedge_1b) = \phi(a) \wedge_2 \phi(b)$ for all $a,b \in \mL_1$;
        \item 
        a \textbf{lattice anti-isomorphism} if
    $\phi(a \vee_1 b) = \phi(a) \wedge_2 \phi(b)$ and $\phi(a\wedge_1b) = \phi(a) \vee_2 \phi(b)$ for all $a,b \in \mL_1$.
    \end{enumerate}
    
\end{definition}

\begin{definition}
Let $\mathcal{L}$ be a lattice with meet $\wedge$ and join $\vee$.
Let $a, b\in\mathcal{L}$. 
\begin{enumerate}
    \item Let $a\leq b$. The {\bf interval} $[a,b]\subseteq\mathcal{L}$ is the set of all $x\in\mathcal{L}$ such that $a\leq x\leq b$. It defines the {\bf interval sublattice} $([a,b],\leq,\vee,\wedge)$.
    \item Let $a \leq b$ and let $c\in  [a,b]$. We say that $d$ is a \textbf{complement} of $c$ in $[a,b]$  if $c \wedge d = a$ and $c \vee d = b$. 
    \item $\mL$ is called is called {\bf complemented} if every $c \in \mL$ has a complement in $\mL$. $\mL$ is called {\bf relatively complemented} if every interval of $\mL$ is complemented.
    \item Let $a\leq b$. If $[a,b]\subseteq \mL$ is such that for any $x\in \mL$, $x\in[a,b]$ implies that $x=a$ or $x=b$, then $b$ is called a {\bf cover} of $a$ and we write $a\lessdot b$. 
    \item An {\bf atom} or {\bf point} of $\mL$ is any element that is a cover of $\textbf{0}$. A {\bf coatom} or {\bf copoint} of $\mL$ is any element that is covered by \textbf{1}. We define $\mA([a,b]):=\{ x \in [a,b]: a \lessdot x \}$ and 
    $\mH([a,b]):=\{x \in [a,b] : x \lessdot b\}$. We also define
    $\mA(b):=\{x \in [\zero,b] : \zero \lessdot \;x \}$ and $\mH(b):=\{x \in [\zero,b] : x \lessdot b\}$.
    \item 
    A {\bf chain} from $a$ to $b$ is a totally ordered subset of $[a,b]$ with respect to $\leq$. A chain from $a$ to $b$ is called a {\bf maximal} chain in $[a,b]$ if it is not properly contained in any other chain from $a$ to $b$. 
    A finite {\bf chain} from $a$ to $b$ is a sequence of the form
    $ a = x_1 < \cdots < x_{k+1}=b $ with $x_j\in\mathcal{L}$ for $j \in [k]$, in which case we say that the chain has length $k$. 
    \item Let $a \leq b$. If $\mL$ has no infinite chains, we define the {\bf length} of $[a,b]$, denoted $\ell([a,b])$, to be the maximum length over all maximal chains from $a$ to $b$, if this maximum is finite. Otherwise, $\ell([a,b])$ is infinite. The {\bf height} of $b$ is defined to be $\h(b):=\ell([\zero,b])$.
    \item $\mL$ is called \textbf{lower} (resp. \textbf{upper}) {\bf semimodular} if $a \lessdot a \vee b$ implies $a \wedge b \lessdot b $ (resp. $a \wedge b \lessdot b$ implies $a \lessdot a \vee b$).
    \item $\mL$ is called {\bf modular} if for all $a,b,c \in \mL$, we have that
$a \geq c$ implies $(a \wedge b) \vee c = a \wedge (b \vee c).$
\end{enumerate}
\end{definition}

Every modular lattice is both upper and lower semimodular, while the converse holds for lattices of finite length.
If $\mL$ has no infinite chains and is either lower or upper semimodular, then it has the Jordan-Dedekind chain property, which means that any pair maximal chains from $A$ to $B$ in $\mL$ have the same length. Furthermore, this implies that $\mL$ is graded with respect to its height function, i.e., if $X,Y \in \mL$ and $X \lessdot Y$, then $\h(Y)=\h(X)+1$. Finally, if $\mL$ is lower (respectively, upper) semimodular, then for any anti-isomorphism $\varphi$ on $\mL$, we have that $\varphi(\mL)$ is upper (respectively, lower) semimodular. The interested reader is referred to \cite{romanlattice} for further details.

\begin{notation}
  Throughout this paper, unless stated otherwise, $\mL$ will denote a 
  lattice of finite length $n$, with order relation $\leq$, meet $\wedge$, and join $\vee$. We will denote atoms of $\mL$ in lowercase, while if the height of an element is not specified to be 1, we will use uppercase. 
  We write $\mH$ to denote the coatoms of $\mL$ and we write $\mA$ to denote the set of atoms of $\mL$.
\end{notation}

With respect to the connections with coding theory discussed in this paper, we are particularly interested in the subspace lattice $(\mathcal{L}(E), \leq, \vee, \wedge)$, which is the lattice of $\mathbb{F}_q$-subspaces of $E$, ordered with respect to inclusion and for which the join of a pair of subspaces is their vector space sum and the meet of a pair of subspaces is their intersection. That is, for all subspaces $A,B \leq E$ we have:
$ A \vee B = A + B, \; A \wedge B = A \cap B.$
The minimal element of $\mL(E)$ is $\zero=\langle 0 \rangle$ and its maximal element is $\textbf{1}=E$. 
$\mL(E)$ is a modular, complemented lattice. 
For each $U \in \mL(E)$, we write $U^\perp$ to denote the orthogonal complement of $U$ with respect to a fixed non-degenerate bilinear form on $E$. The map $U\mapsto U^\perp$ is an involutory anti-automorphism of~$\mL(E)$. We let $\mA(E)$ and $\mH(E)$ denote the atoms and coatoms, respectively, of $\mL(E)$.

We may think of $\mL(E)$ as a $q$-analogue of the Boolean lattice $\mL(S)$, where $S$ is a set of cardinality $n$. 
In $\mL(S)$, the meet operation is again intersection while the join operation is set-theoretic union. Several formulae and invariants related to the objects associated with $\mL(E)$ can be obtained in the Boolean case by setting $q=1$.  

We recall the definition of a closure operator on a lattice; see \cite{crapo1969mobius,whittle}.

\begin{definition}
   A {\bf closure operator} on the lattice $\mL$ is a function $\text{cl}: \mL \longrightarrow \mL$ such that
   \begin{enumerate}
       \item $\cl(X) \geq X$, for all $X \in \mL$; 
       \item $\cl(X) \leq \cl(Y)$ for all $X,Y \in \mL$ such that $X \leq Y$;
       \item $\cl(\cl(X))=\cl(X)$ for all $X \in \mL$. 
   \end{enumerate}
\end{definition}

Let $\cl$ be a closure operator on $\mL$. An element $X \in \mL$ is called {\bf closed} with respect to $\cl$ if $X=\cl(X)$. The quotient lattice of $\mL$ with respect to $\cl$ is the lattice of closed elements of $\mL$. 

\begin{definition}[\cite{whittle}]
    Let $f:\mL \longrightarrow \N_0$ such that $f(\zero)=0$ and for all $A,B\in \mL$, $f(A) \leq f(B)$ whenever $A \leq B$. We say that the pair $(\mL,f)$ is an (integer) {\bf weighted lattice}.
\end{definition}

\begin{definition}
    Let $\mW=(\mL,f)$ be a weighted lattice and let $X,Y \in \mL$.
    Define the map $f_{[X,Y]}: \mL \longrightarrow \N_0$ by 
    $f_{[X,Y]}(T) = f(T)-f(X)$ for every $T \in [X,Y]$.
Then $\mW([X,Y])$ is the weighted lattice defined by the pair $([X,Y],f_{[X,Y]})$.
The weighted lattice $\mW([X,Y])$ is called a {\bf minor} of $\mW$.
\end{definition}

\begin{definition}
 We say that two weighted lattices $\mW_1=(\mL_1,f_1)$ and 
$\mW_2=(\mL_2,f_2)$ are {\bf scaling-lattice-equivalent}
if there exists a lattice isomorphism $\phi: \mL_1\longrightarrow \mL_2$ and $\lambda \in \Q$ such that $f_1(A) = \lambda f_2(\phi(A))$ for all $A \in \mL_1$. If $\lambda =1$, we say that $\mW_1$ and $\mW_2$ are {\bf lattice-equivalent} and we write $\mW_1 \cong \mW_2$. 
\end{definition}

We mention some other well-known facts on weighted lattices; see \cite{whittle} for a more detailed treatment.
  The element $F \in \mL$ is called a {\bf flat} of the weighted lattice $(\mL,f)$ if $f(A)>f(F)$ whenever
    $A>F$.
    A closure operator $\cl$ on $\mL$ is called {\bf respectful} on $(\mL,f)$ if $f(A)=f(\cl(A))$ for all $A \in \mL$. A closure operator is respectful on $(\mL,f)$ if and only if every flat $F$ of $(\mL,f)$ satisfies $\cl(F)=F$, i.e. is closed. 
    The collection of flats of $(\mL,f)$ forms a lattice called the \textbf{lattice of flats} of $(\mL,f)$. We define the {\bf principal closure} of $A \in \mL$ to be the meet of all the flats of $(\mL,f)$ that contain $A$. Thus the quotient lattice with respect to the principal closure operator on $(\mL,f)$ coincides with the lattice of flats of $(\mL,f)$. 
We say that an atom $x \in \mL$ is a {\bf loop} of the weighted lattice $(\mL,f)$ if $f(x)=0$.

The M\"obius function (see, e.g. ~\cite[Chapter 25]{Lint}) is necessary for the definition of a characteristic polynomial. 

\begin{definition}
Let $(\mP,\leq)$ be a partially ordered set. The M\"obius function for $\mP$ is defined via the recursive formula for all $X,Y \in \mP$: 
\begin{align*}
\mu(X,Y) := \left\{ 
        \begin{array}{cl}
        1 & \text{ if } X=Y,\\
        -\sum\limits_{{X\leq Z<Y}}\mu(X,Z) & \text{ if } X<Y, \\
        0 & \text{otherwise}.
        \end{array}
          \right.
\end{align*}
\end{definition}
An equivalent definition of $\mu$ is given by:
\begin{equation}\label{eq:mob}
     \sum_{X \leq  Z \leq Y} \mu(Z,Y) =0 \:\:\forall X,Y \in \mP, \; X < Y \text{ and }\mu(X,X)=1 \:\:\forall X \in \mL.
\end{equation}

\begin{lemma}[The M\"obius Inversion Formula]
 Let $f,g:\mP\longrightarrow \mathbb{Z}$ be functions on a poset $\mP$. The following hold.
	\begin{enumerate}
		\item$\displaystyle f(X)=\sum_{X\leq Y}g(Y)\:\forall\:X \in \mP \mbox{ if and only if }g(X)=\sum_{X\leq Y}\mu(X,Y)f(Y) \:\forall\:X \in \mP$.
		\item$\displaystyle f(X)=\sum_{X\geq Y}g(Y)\:\forall\:X \in \mP\mbox{ if and only if }g(X)=\sum_{X\geq Y}\mu(Y,X)f(Y)\:\forall\:X \in \mP$.
	\end{enumerate}
\end{lemma}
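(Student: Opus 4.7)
The plan is to reduce both parts to the fundamental orthogonality relations satisfied by $\mu$, then conclude by a standard swap-of-summation argument. Throughout, I assume $\mP$ is finite (or, more generally, locally finite) so that all sums involved are well-defined.

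First, I would extract from the defining recursion the two orthogonality identities
\[
\sum_{x \leq z \leq y} \mu(x,z) \;=\; \delta_{x,y} \quad \text{and} \quad \sum_{x \leq z \leq y} \mu(z,y) \;=\; \delta_{x,y},
\]
where $\delta_{x,y}$ equals $1$ if $x=y$ and $0$ otherwise. The first is the recursive definition rewritten: for $x=y$ the sum is $\mu(x,x)=1$, and for $x<y$ the definition $\mu(x,y) = -\sum_{x\leq z<y}\mu(x,z)$ rearranges to $\sum_{x\leq z\leq y}\mu(x,z)=0$. The second identity is precisely equation~\eqref{eq:mob} together with the base case $\mu(y,y)=1$.

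For the forward direction of part~(1), I would substitute the hypothesis into the proposed expression for $g$ and swap the order of summation:
\begin{align*}
\sum_{x \leq y} \mu(x,y)\, f(y)
  &= \sum_{x \leq y} \mu(x,y) \sum_{y \leq z} g(z) \\
  &= \sum_{x \leq z} g(z) \sum_{x \leq y \leq z} \mu(x,y) \\
  &= \sum_{x \leq z} g(z)\, \delta_{x,z} \;=\; g(x),
\end{align*}
using the first orthogonality identity at the third step. The converse direction is entirely symmetric: starting from $g(y)=\sum_{y\leq z}\mu(y,z)f(z)$, substitute into $\sum_{x\leq y}g(y)$, swap, and apply the same orthogonality identity.

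Part~(2) follows by the same reasoning applied to the opposite poset $\mP^{\mathrm{op}}$, whose M\"obius function at $(x,y)$ is $\mu(y,x)$; equivalently, one repeats the computation above using the second orthogonality identity instead of the first. There is no real obstacle here beyond keeping careful track of the direction of the inequality and the order of the arguments of $\mu$; the result is essentially a bookkeeping exercise once the two orthogonality relations are in place.
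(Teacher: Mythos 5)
Your proof is correct and follows the standard route: state the two orthogonality relations for $\mu$, substitute, swap the order of summation, and collapse with a Kronecker delta. The paper does not supply a proof of this lemma at all (it is cited as a standard fact, e.g.\ from van Lint's text), so there is no paper proof to compare against; your argument is exactly the textbook one.

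One small imprecision worth flagging: in the converse direction of part~(1), after substituting $g(y)=\sum_{y\leq z}\mu(y,z)f(z)$ into $\sum_{x\leq y}g(y)$ and swapping, the inner sum is $\sum_{x\leq y\leq z}\mu(y,z)$, which collapses via the \emph{second} orthogonality identity $\sum_{x\leq z\leq y}\mu(z,y)=\delta_{x,y}$, not the first as your phrase ``the same orthogonality identity'' suggests. Both identities are genuinely needed in part~(1) --- one per direction --- and you have already stated both, so this does not affect the soundness of the argument, but the attribution should be corrected. The reduction of part~(2) to part~(1) via the opposite poset (using $\mu_{\mP^{\mathrm{op}}}(x,y)=\mu_{\mP}(y,x)$) is fine.

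Also note that finiteness or local finiteness of $\mP$ (which you correctly assume to justify the swap of summation and convergence) is implicitly in force throughout the paper, since $\mP$ is always a finite lattice there, so this hypothesis causes no friction with the rest of the text.
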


We recall the following result of Crapo \cite[Theorem 1]{crapo1969mobius}.

\begin{lemma}[\cite{crapo1969mobius}]\label{lem:crapo}
   Let $\mL'$ be the quotient lattice of $\mL$ with respect to a closure operator $\cl$ of~$\mL$ and let $\mu'$ be its corresponding M\"obius function. Let $X,Y \in \mL$. Then 
    \[
       \sum_{Z \in \mL: \cl(Z) = \cl(Y)}\mu(X,Z) 
       = \left\{ \begin{array}{cc}
            \mu'(\cl(X),\cl(Y))& \text{ if } \cl(X) = X,   \\
            0 & \text{ if } \cl(X) > X.
         \end{array}\right.
    \]
\end{lemma}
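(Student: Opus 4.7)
The plan is to apply M\"obius inversion on the quotient lattice $\mL'$, exploiting the Galois connection $(\cl,\iota)$, where $\iota\colon\mL'\hookrightarrow\mL$ denotes inclusion. The central observation is that for every $v\in\mL'$, we have the equivalence $\cl(z)\le v \iff z\le v$: the forward direction uses $z\le\cl(z)$, and the reverse combines monotonicity of $\cl$ with $\cl(v)=v$. Hence $z\mapsto\cl(z)$ partitions the principal ideal $\{z\in\mL:z\le v\}$ into the fibers $\{z\in\mL:\cl(z)=u\}$ indexed by $u\in\mL'\cap[\zero,v]$.

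Set $N(u,y):=\sum_{z\in\mL\,:\,\cl(z)=u}\mu(z,y)$ for $u\in\mL'$. For the closed case $\cl(y)=y$, the partition observation immediately yields
\[
\sum_{z\le v}\mu(z,y)\;=\;\sum_{u\in\mL',\,u\le v}N(u,y)\qquad\text{for every }v\in\mL'.
\]
I would then show by reverse induction on the length of a maximal chain from $v$ to $y$ in $\mL'$ that $N(v,y)=\mu'(v,y)$: the base case $v=y$ uses the recursion $\mu(y,y)=1$ together with the defining identity $\sum_{w\le z\le y}\mu(z,y)=[w=y]$ in $\mL$, while the inductive step applies M\"obius inversion within $\mL'$ to the displayed identity and compares with the defining recursion of $\mu'$. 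Specializing $v=\cl(x)$ then yields the desired equality.

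For the non-closed case $\cl(y)>y$, I would establish vanishing by a cancellation argument. Since $y$ is not closed, one may fix an element $w$ with $y\lessdot w\le\cl(y)$; this $w$ induces a sign-reversing involution on the summation set $\{z\in\mL:\cl(z)=\cl(x),\,z\le y\}$ by pairing $z$ with an element derived from the local structure of $[\zero,y]$ relative to $w$, preserving the closure class $\cl(x)$ while negating the M\"obius value by the defining recursion. Alternatively, one may iterate the partition-and-invert argument from the closed case and observe that the absence of $y$ from $\mL'$ forces every term in the resulting expression for $N(\cl(x),y)$ to cancel against its neighbors.

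The main obstacle is the inductive verification in the closed case, where one must carefully align the partial sums of $N(\cdot,y)$ over intervals in $\mL'$ with those of $\mu'(\cdot,y)$ using only the Galois connection and the defining properties of $\cl$ and $\mu$; the construction of a sign-reversing involution in the non-closed case demands an equally delicate combinatorial argument that simultaneously respects the closure condition $\cl(z)=\cl(x)$ and produces the required pairwise cancellation of M\"obius contributions.
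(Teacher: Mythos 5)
Your key observation --- the covariant adjunction $\cl(z)\le v\iff z\le v$ for closed $v$, and the resulting partition of the principal ideal $[\zero,v]$ into fibers of $\cl$ --- is exactly the right engine, but the inversion step you describe cannot reach the target. Inverting your displayed identity
\[
\sum_{z\le v}\mu(z,y)\;=\;\sum_{u\in\mL',\,u\le v}N(u,y)\qquad(v\in\mL')
\]
in $\mL'$ over the variable $v$ gives $N(v,y)=\sum_{u\in\mL',\,u\le v}\mu'(u,v)\sum_{z\le u}\mu(z,y)$, an expression in $\mu'(u,v)$ over the lower interval $[\zero,v]$; there is no route from here to $\mu'(v,y)$. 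Indeed the base case of your proposed induction is already false: $N(y,y)=\sum_{z:\,\cl(z)=y}\mu(z,y)$, and the fiber $\{z:\cl(z)=y\}$ is not an interval, so the identity $\sum_{w\le z\le y}\mu(z,y)=[w=y]$ does not apply. On the Boolean lattice of $\{1,2\}$ with the closure fixing $\emptyset,\{1\},\{1,2\}$ and sending $\{2\}\mapsto\{1,2\}$, one computes $N(\{1,2\},\{1,2\})=\mu(\{2\},\{1,2\})+\mu(\{1,2\},\{1,2\})=-1+1=0$, whereas $\mu'(\{1,2\},\{1,2\})=1$.

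The obstruction is in the statement itself: as printed it has the two arguments of $\mu$ transposed relative to Crapo's theorem, which reads
\[
\sum_{z\in\mL:\,\cl(z)=\cl(y)}\mu(x,z)=\begin{cases}\mu'(x,\cl(y))&\text{if }\cl(x)=x,\\ 0&\text{if }\cl(x)\gneq x,\end{cases}
\]
i.e.\ the summation runs over the \emph{second} argument of $\mu$ and the closure condition is imposed on the \emph{first}. (This transposed form is also what Proposition~\ref{prop:charloop} actually requires: one groups $\sum_{B\ge A}\mu(A,B)z^{f(\one)-f(B)}$ by the value of $\cl(B)$, with $A$ fixed in the first slot.) Once the arguments are straightened out, your Galois-connection strategy closes the proof in one step, with no induction and no involution: set $N(x,v):=\sum_{z:\cl(z)=v}\mu(x,z)$ for $v\in\mL'$; the adjunction gives $\sum_{u\in\mL',\,u\le v}N(x,u)=\sum_{z\le v}\mu(x,z)=\sum_{x\le z\le v}\mu(x,z)=[x=v]$, and M\"obius inversion in $\mL'$ yields $N(x,v)=\sum_{u\in\mL',\,u\le v}\mu'(u,v)[x=u]$, which equals $\mu'(x,v)$ when $\cl(x)=x$ and $0$ otherwise.
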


Given $U,V \in \mL(E)$ of dimensions $\text{u}$ and $\text{v}$, respectively, we have that
\begin{equation*}\label{eq:mobinv}
\mu\left(U,V\right)=\left\lbrace
\begin{array}{cl}
(-1)^{\text{v}-\text{u}}q^{\binom{\text{v}-\text{u}}{2}}&\mbox{ if }U\leq V,\\
\\
0 & \mbox{ otherwise.}
\end{array}
\right.
\end{equation*}


\subsection{Basic Notions of $\mL$-Polymatroids and {\em q}-Polymatroids}

We outline basic facts and definitions of $\mL$-Polymatroids and $q$-polymatroids. In all of the following, the definitions are extensions of those found in the literature on matroids and polymatroids; we simply consider these structures to be defined on more general lattices than the Boolean lattice.

For structures defined on the subspace lattice, the main difference to those defined over the Boolean lattice is brought about by the fact that the latter is distributive, while the subspace lattice is not. Furthermore, while both lattices are relatively complemented, the complement of an element of the subspace lattice is not unique. Finally, over a finite field, an orthogonal complement of an element is not in general one of its complements in the subspace lattice. 
The first results on $q$-polymatroids and codes can be read in \cite{shiromoto19} and \cite{gorla2019rank}.

\begin{definition}
An \textbf{$(\mL,r)$-(integer) polymatroid} is a pair $\M=(\mL,\rho)$ for which $r \in \N_0$ and $\rho$ is a function $\rho: \kL \longrightarrow {\mathbb N}_0$ satisfying the following axioms.

\begin{itemize}
	\item[(R1)] Boundedness: $0\leq \rho(A) \leq r \cdot \h(A)$, for all $A \in \mL$.
	\item[(R2)] Monotonicity: $A\leq B \Rightarrow \rho(A)\leq \rho(B)$,  for all $A,B \in \mL$.
	\item[(R3)] Submodularity: $\rho(A \vee B)+\rho(A\wedge B)\leq \rho(A) +\rho(B)$, for all $A,B \in \mL$.
\end{itemize}
If $\mL=\mL(E)$, we say that $\M$ is a $(q,r)$-polymatroid.
\end{definition}
If it is not necessary to specify $r$, we will simply refer to such an object as an $\mL$-polymatroid, in the case of arbitrary $\mL$, or as a $q$-polymatroid for $\mL=\mL(E)$. We define a \textbf{$q$-matroid} to be a $(q,1)$-polymatroid. A $(1,r)$-polymatroid is an integer polymatroid and a $(1,1)$-polymatroid is simply a matroid.
Clearly, an $\mL$-polymatroid $(\mL, \rho)$ is a weighted lattice whose underlying function $\rho$ is bounded and submodular.
We say that an atom $x \in \mL$ is a {\bf loop} of the $\mL$-polymatroid $\M=(\mL,\rho)$ if $\rho(x)=0$. A coatom $H \in \mL$ is called a {\bf coloop} of $\M$ if $\rho(H) = \rho(\one)-r$.

\begin{lemma}\label{lem:maxr}
    Let $\mL$ be a lower semimodular or an upper semimodular lattice. Let $\M=(\mL,\rho)$ be an $\mL$-polymatroid. Let $[X,Y]$ be an interval of $\mL$ and let $r=\max\{ \rho(T)-\rho(S): S,T \in [X,Y], S \lessdot T\}$.
    Then $r(\h(Y)-\h(X)) \geq \rho(Y)-\rho(X)$.
\end{lemma}
\begin{proof}
   Let $X =X_0 \lessdot \cdots \lessdot X_k =Y$ be a maximal chain from $X$ to $Y$.
   Since $\mL$ satisfies the Jordan-Dedekind chain condition, we have that 
   $\h(Y)-\h(X)=k$. By our choice of $r$, we thus have that:
   \[
   \rho(Y) - \rho(X) = \rho(X_k) - \rho(X_{k-1})+ \rho(X_{k-1})-\cdots + \rho(X_1) -\rho(X_0) 
   \leq r k = r(\h(Y) - \h(X)).
   \]
\end{proof}

\begin{proposition}\label{prop:minors}
    Let $\mL$ be a lower semimodular or an upper semimodular lattice. Let $\M=(\mL,\rho)$ be an $\mL$-polymatroid. Let $[X,Y]$ be an interval of $\mL$ and let $r=\max\{ \rho(T)-\rho(S): S,T \in [X,Y], S \lessdot T\}$. 
    Then $([X,Y],\rho_{[X,Y]})$ is an $(\mL,r)$-polymatroid.
\end{proposition}
\begin{proof}
   It is clear that (R2) and (R3) hold. Furthermore, it is easy to see that $\rho_{[X,Y]}(U) = \rho(U) - \rho(X) \geq 0$ for all $U \in [X,Y]$.
   By Lemma \ref{lem:maxr}, for any $U\in [X,Y]$ we have that:
   \[\rho_{[X,Y]}(U) \leq r k = r(\h(U) - \h(X)),\]
   from which it follows that (R1) holds for the given choice of $r$.
\end{proof}
\begin{remark}
    Note that the assumption that $\mL$ is lower or upper semimodular is required only to show that $\rho_{[X,Y]}(U) \leq r \cdot \h(U)$ for every $U \in [X,Y]$, where $r=\max\{ \rho(T)-\rho(S): S,T \in [X,Y], S \lessdot T\}$. If $\mL$ is an arbitrary finite lattice equipped with a rank function $\rho$ satisfying axioms (R1)-(R3), then for any interval $[X,Y]$ of $\mL$, $\rho_{[X,Y]}$ is the rank function of an $([X,Y],r')$-polymatroid for some integer $r'$. 
\end{remark}
We therefore define the following {\em minors} of an 
$\mL$-polymatroid.

\begin{definition}
Let $\M=(\mL,\rho)$ be an $\mL$-polymatroid and let $[X,Y]$ be an interval of~$\mL$. 
Then $\M([X,Y])$ is the $\mL$-polymatroid defined by the pair 
$([X,Y],\rho_{[X,Y]})$. We say that $\M([X,Y])$ is a {\bf minor} of $\M$.  
Let $\varphi$ be an anti-automorphism of $\mL$.
\begin{enumerate}
    \item 
    We write $\M|_Y:=\M([\zero,Y])$, which is called the {\bf restriction} of $\M$ to $Y$.
   \item 
   We write $\M/X:=\M([X,E])$, which is called the {\bf contraction} of $\M$ {\bf by} $X$. 
  \item 
  For $T \in \mL$, we write $\M.T:=\M/\varphi(T)$, which is called the {\bf contraction} of $\M$ {\bf to}~$T$.
\end{enumerate}
\end{definition}

In the more general context of a weighted lattice $\mW=(\mL,f)$, we similarly write $\mW|_Y:=\mW([\zero,Y])$ and $\mW/X:=\mW([X,\one])$ for all $X,Y \in \mL$.

\begin{proposition}
    Let $\mL$ be a lower semimodular lattice. Let $\M=(\mL,\rho)$ be an $\mL$-polymatroid. Let $\mL^*$ be a lattice of finite length $n$ such that there exists a lattice anti-isomorphism $\varphi: \mL \longrightarrow \mL^*$. Let $r=\max\{ \rho(T)-\rho(S): S,T \in \mL, S \lessdot T\}$. For every $A\in \mL$, define 
    $\rho^*(\varphi(A)):=r \cdot \h(\varphi(A))-\rho(\one)+\rho(A)$.
Then $\M^*=(\mL^*,\rho^*)$ is an $(\mL^*,r)$-polymatroid called the {{\bf dual}} of~$\M$.
\end{proposition}
\begin{proof}
    By Lemma \ref{lem:maxr}, we have that $r(\h(B)-\h(A))\geq \rho(B)-\rho(A)$ for any interval $[A,B]$ of $\mL$. 
    It follows that for any $A\in \mL$, we have $\rho^*(\varphi(A)):=r \cdot \h(\varphi(A))-\rho(\one)+\rho(A) = r(\h(\one)-\h(A))-(\rho(\one)-\rho(A))\geq 0$. Moreover, since $\rho$ is increasing on $\mL$ we have that $\rho(\one) \geq \rho(A)$ and so $\rho^*(\varphi(A)) \leq r\cdot \h(\varphi(A)) $. Therefore, (R1) holds.
    Let $[A,B]$ be an interval of $\mL$. Then $[\varphi(B),\varphi(A)]$ is an interval of $\mL^*$ and 
    \begin{eqnarray*}
        \rho(\varphi(A))-\rho(\varphi(B)) & = & 
        r(\h(\varphi(A)-\h(\varphi(B)) +\rho(B)-\rho(A)\\
        &=& r\ell([\varphi(B),\varphi(A)]) + \rho(B)-\rho(A)\\
        &=& r\ell([A,B]) + \rho(B)-\rho(A)\\
        &=& r(\h(B)-\h(A)) +\rho(B)-\rho(A),
    \end{eqnarray*}
    which is non-negative by Lemma \ref{lem:maxr} and shows that (R2) holds.
    Finally, we show that (R3) holds. 
    Since
    $\mL$ is lower semimodular, $\mL^*$ is upper semimodular and hence
    for any $S,T \in \mL^*$ we have
    $\h(S \vee T)+\h(S \wedge T) \leq \h(S) +\h(T)$. Therefore, for any $A,B \in \mL$ we have: 
    \begin{eqnarray*}
        & &\rho^*(\varphi(A) \vee \varphi(B)) + \rho^*(\varphi(A) \wedge \varphi(B)) \\
        &=& r \cdot \h(\varphi(A) \vee \varphi(B))-\rho(\one)+\rho(A\wedge B) + r \cdot \h(\varphi(A) \wedge \varphi(B))-\rho(\one)+\rho(A\vee B)\\
        &\leq & r\cdot \h(\varphi(A)) -\rho(\one) + \rho(A) +r\cdot \h(\varphi(B)) -\rho(\one) + \rho(B)\\
        &=& \rho^*(\varphi(A)) + \rho^*(\varphi(B)).
    \end{eqnarray*}
\end{proof}

Different choices of anti-isomorphism $\varphi$ may lead to different dual matroids, but all duals of an $\mL$-polymatroid $\M$ are lattice-equivalent.

\begin{definition}
    Let $\mW=(\mL,f)$ be a weighted lattice and 
    let $X,Y \in \mL$. Let $\mu$ be the M\"obius function of $\mL$.
	The {\bf characteristic polynomial} of $\mW([X,Y])$ is the polynomial in $\ZZ[z]$ defined~by 
   $$ \pp(\mW([X,Y]);z) := \sum_{A\in [X,Y]} \mu(X,A) z^{f(Y)-f(A)}.$$ 
   In particular, we have
	$$ \pp(\mW;z) := \sum_{X\in \mL} \mu(\textbf{0},X) z^{f(\one)-f(X)}.$$
\end{definition}
It is straightforward to check that the characteristic polynomial is an invariant of the lattice-equivalence class of a weighted lattice. 
Indeed, if $\phi: \mL \longrightarrow \mL'$ is a lattice isomorphism, such that $\mW=(\mL,f)$ and $\mW'=(\mL',f')$ are lattice-equivalent via $f(X) = f'(\phi(X))$ for all $X \in \mL$, then

\begin{align*}
    \pp(\mW;z) = \sum_{X\in \mL} \mu(\textbf{0},X) z^{f(\one)-f(X)}
    =\sum_{X\in \mL} \mu(\textbf{0},\phi(X)) z^{f'(\phi(\one))-f'(\phi(X))}
    = \pp(\mW';z).
\end{align*}

If $\mW=(\mL,f)$ and $\mW'=(\mL',f')$ are scaling-lattice-equivalent via $f(X) = \lambda f'(\phi(X))$ for some $\lambda \in \Q$, then
$\pp(\mW';z^\lambda) = \pp(\mW;z)$.

By the definition of the M\"obius function, for a weighted lattice $\mW=(\mL,f)$, we have $\pp(\mW; 1) = 0$ and so, unless $\pp(\mW; z)$ is identically zero, $z- 1$ is a factor in $\Z[z]$.
There are some instances for which it is known that $\pp(\mW;z)$ is identically zero, for example, this is the case if $f(a)=0$ for some atom of $a \in \mL$. More generally, as a direct consequence of Lemma \ref{lem:crapo} (\cite[Theorem~1]{crapo1969mobius}), we have the following.

\begin{proposition}\label{prop:charloop}
    Let $\mW=(\mL,f)$ be a weighted lattice and let $A \in \mL$. 
    If $A$ is not a flat of $\mW$ then $\pp(\mW/A;z)$ is identically zero.
\end{proposition}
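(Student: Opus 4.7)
The strategy is to exploit the principal closure $\cl$ on $\mL$, which is respectful for $\mW$ in the sense that $f(T) = f(\cl(T))$ for every $T \in \mL$, and to regroup the sum defining $\pp(\mW/A;z)$ so that each coefficient becomes a M\"obius sum that vanishes whenever $A$ fails to be a flat.

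First I would observe that the flats of $\mW/A$ are precisely the flats of $\mW$ lying in $[A,\one]$, and that the principal closure on $\mW/A$ coincides with the restriction of $\cl$ to $[A,\one]$. In particular, $A$ is a flat of $\mW/A$ if and only if $A$ is a flat of $\mW$. Using respectfulness to replace $f(T)$ by $f(\cl(T))$ and grouping terms by the common value $\cl(T) = F$, I would rewrite
$$\pp(\mW/A;z) \;=\; \sum_{F} z^{f(\one) - f(F)}\, h(F), \qquad h(F) := \sum_{\substack{T \in [A,\one] \\ \cl(T) = F}} \mu(A,T),$$
where $F$ ranges over the flats of $\mW/A$. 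It then suffices to show $h(F) = 0$ for every such $F$.

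For this, note that when $F$ is a flat one has $\cl(T) \leq F$ iff $T \leq F$, and therefore
$$\sum_{\substack{G \text{ flat of } \mW/A \\ G \leq F}} h(G) \;=\; \sum_{T \in [A,F]} \mu(A,T) \;=\; \delta_{A,F},$$
by the standard M\"obius identity on the interval $[A,F]$. When $A$ is not a flat of $\mW$, the element $A$ does not belong to the lattice of flats of $\mW/A$, so the right-hand side is zero for every flat $F$. M\"obius inversion in the flat lattice of $\mW/A$ then forces $h(F) = 0$ for every flat $F$, and hence $\pp(\mW/A;z) \equiv 0$.

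The main step requiring care is the identification of the flats and the principal closure of the minor $\mW/A$ with those inherited from $\mW$ on $[A,\one]$; once this is in place, the rest is a routine M\"obius-inversion argument. This identity can be viewed as the ``dual'' of Lemma~\ref{lem:crapo}: whereas Crapo's statement fixes the upper argument of $\mu$ and varies the lower one, here we fix the lower argument and vary the upper; both are consequences of respectfulness combined with M\"obius inversion.
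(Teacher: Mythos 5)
Your proof is correct. The paper itself disposes of this proposition with a one-line citation to Crapo's result (Lemma~\ref{lem:crapo}); you instead reprove the needed M\"obius cancellation directly, and your closing remark about duality is accurate and worth taking seriously: Lemma~\ref{lem:crapo} as transcribed in the paper concerns $\sum_{z:\,\cl(z)=\cl(x)}\mu(z,y)$, in which the \emph{lower} argument of $\mu$ ranges over a closure class, whereas the coefficient appearing after you regroup $\pp(\mW/A;z)$ is $h(F)=\sum_{T:\,\cl(T)=F}\mu(A,T)$, in which the \emph{upper} argument ranges. The lemma as stated therefore does not literally apply, and what you prove is precisely the dual identity the paper is implicitly invoking. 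Your derivation is complete and self-contained: you use respectfulness of the principal closure to replace $f(T)$ by $f(\cl(T))$ and partition the interval $[A,\one]$ by closure, you observe that for a flat $F$ one has $\sum_{G\text{ flat},\,G\leq F}h(G)=\sum_{T\in[A,F]}\mu(A,T)=\delta_{A,F}$ (which vanishes because $A$, not being a flat, cannot equal $F$), and you finish by M\"obius inversion in the flat lattice of $\mW/A$, whose bottom element is $\cl(A)>A$. This buys transparency and independence from the precise form of Crapo's theorem at the modest cost of a few lines that the paper outsources.
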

 
\begin{lemma}\label{lem:chardec}
    Let $\mW=(\mL,f)$ be a weighted lattice. Then
    $\displaystyle \pp(\mW;z) = z^{f(\textbf{1})} -\sum_{B \in \mL:B>\zero} \pp(\mW/B;z)$.
\end{lemma}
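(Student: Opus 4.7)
The plan is to expand both sides using the definition of the characteristic polynomial, swap summation orders, and then invoke the defining M\"obius recursion \eqref{eq:mob}. The argument is essentially bookkeeping, so the main care needed is in tracking the minor-shifted weight function.

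First, I would unpack $\pp(\mW/B;z)$ for $B > \zero$. Since the minor $\mW/B$ is the weighted lattice $([B,\one], f_{[B,\one]})$ with $f_{[B,\one]}(T) = f(T) - f(B)$, and since the M\"obius function of the interval sublattice $[B,\one]$ agrees with the restriction of $\mu$, the definition gives
\[
\pp(\mW/B;z) \;=\; \sum_{A \in [B,\one]} \mu(B,A)\, z^{(f(\one)-f(B)) - (f(A)-f(B))} \;=\; \sum_{A \geq B} \mu(B,A)\, z^{f(\one)-f(A)},
\]
so the shift by $f(B)$ cancels cleanly.

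Next, I would isolate the $X=\zero$ term in $\pp(\mW;z) = \sum_{X \in \mL} \mu(\zero,X)\,z^{f(\one)-f(X)}$. Since $f(\zero)=0$ and $\mu(\zero,\zero)=1$, this term contributes $z^{f(\one)}$, so the identity to prove becomes
\[
\sum_{X > \zero} \mu(\zero,X)\, z^{f(\one)-f(X)} \;=\; -\sum_{B > \zero} \sum_{A \geq B} \mu(B,A)\, z^{f(\one)-f(A)}.
\]
On the right-hand side I would swap the order of summation: for each fixed $A$ with $A > \zero$, the inner variable $B$ runs over $\{B : \zero < B \leq A\}$. Grouping by powers of $z$ and moving everything to one side, the identity reduces to
\[
\sum_{A > \zero} z^{f(\one)-f(A)} \Biggl[\, \mu(\zero, A) + \sum_{\zero < B \leq A} \mu(B,A) \,\Biggr] \;=\; 0.
\]

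Finally, the bracketed expression is precisely $\sum_{\zero \leq B \leq A} \mu(B,A)$, which by the M\"obius recursion \eqref{eq:mob} vanishes for every $A \neq \zero$. Hence each coefficient is zero and the identity holds. The one subtle point — really the only thing to check carefully — is that the weight shift in the minor $\mW/B$ cancels the exponent change; once that is noted, the remainder is a standard swap-of-summation plus M\"obius vanishing argument.
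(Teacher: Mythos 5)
Your proof is correct and is essentially the paper's own argument, just with the M\"obius inversion step unpacked by hand. The paper observes (as you do) that $\pp(\mW/B;z)=\sum_{A\geq B}\mu(B,A)z^{f(\one)-f(A)}$ after the weight shift cancels, then applies the M\"obius Inversion Formula to conclude $z^{f(\one)-f(B)}=\sum_{A\geq B}\pp(\mW/A;z)$ for every $B$, and finally sets $B=\zero$; your swap-of-summation together with the recursion \eqref{eq:mob} is precisely the proof of that inversion step written out explicitly, so the two arguments coincide in substance. One small wording nit: you say ``grouping by powers of $z$'' and ``each coefficient is zero,'' but what you actually show (and what is needed) is that the sum vanishes term by term as $A$ ranges over $\mL\setminus\{\zero\}$; distinct $A$ can share the same exponent $f(\one)-f(A)$, so the grouping is by $A$, not by monomial, and the argument as written is correct once read that way.
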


\begin{proof}
    By the definition of the characteristic polynomial, for every $B \in \mL$ we have:
    \begin{align*}
    \pp(\mW/B;z) = & \: \sum_{A \in [B,\textbf{1}] } \mu(B,A) z^{f(\one)-f(A)}.
    \end{align*}
    By M\"obius inversion, for every $B \in \mL$ we have:
    $$z^{f(\one)-f(B)} = \sum_{A \in [B,\textbf{1}] }  \pp(\mW/A;z),$$
    and so the result follows by setting $B=\zero$.
\end{proof}

We define the {\em weight enumerator} of an $(\mL,r)$-polymatroid; see \cite{BRS2009} for the matroid case.

\begin{definition}
    Let $\varphi$ be an anti-automorphism of $\mL$.
	We define the {\bf weight enumerator} of the $(\mL,r)$-polymatroid 
    $\M$ to be the list $[A_\M(i;z) : 0 \leq i \leq n]$, where for each $i$ we define
	$$A_\M(i;z):=\sum_{\substack{X \in \mL: \\\h(X)=i}} \pp(\M.X;z)= 
	\sum_{\substack{X \in \mL: \\ \h(X)=i}} \pp(\M/\varphi(X);z) .$$
\end{definition}

\subsection{Rank-Metric Codes}
Rank-metric codes have a natural connection with $q$-polymatroids, as the {\em supports} of their codewords are subspaces. 
We start by briefly recalling some basic notions on rank-metric codes; see \cite{de78,gabidulin1985theory,gorla2021rank}.
For this purpose, we endow the space $\F_{q}^{n \times m}$ with the \textbf{rank distance}, defined by $\mathrm{d}(A,B):=\rk(A-B)$, for all $A,B\in\F_{q}^{n \times m}$. Moreover, we write $U\leq V$ if $U$ is a subspace of $V$.


\begin{definition}
 We say that $C \leq \F_q^{n \times m}$ is an {\bf $\F_q$-linear (rank-metric) code} or a {\bf matrix code} if $C$ is an $\F_q$-subspace of $\F_{q}^{n \times m}$. Its \textbf{minimum distance} is:
 $$\mathrm{d}(C):=\min\{\rk(M) : M \in C, \; M \neq 0\}.$$ We say that $C$ is an $\F_q$-$[n \times m, k,d]$ rank-metric code if it has $\F_q$-dimension $k$ and minimum distance $d$. 
The \textbf{dual code} of $C$ is defined to be $C^{\perp}=\{M\in\F_q^{n\times m}: \Tr(MN^\top)=0 \textnormal{ for all } N\in~C\}$.
For each $i \in [\min(n,m)]$, we define $W_i(C) := |\{ A \in C : \rk(A) = i \}|$. The list $[W_i(C) : i\in~[\min(n,m)]$ is called the {\bf weight distribution} of $C$.
\end{definition}

Let $X \in \F_q^{n \times m}$ and let $U\leq \F_q^n$. We say that $U$ is the \textbf{support}  of $X$ if $\col(X)=U$. Let $C$ be an $\F_q$-$[n\times m,k]$ rank-metric code. We say that $U$ is a {\bf support} of $C$ if there exists some $X \in C$ with support $U$.
We also define the notion of support for a code. 
The \textbf{support} of $C$ is defined to be the $\F_q$-subspace of $\F_q^n$ given by
$$\mathrm{supp}(C)=\sum_{M\in C}\colsp(M),$$
where the sum denotes the sum of vector subspaces. The code $C$ is said to be \textbf{non-degenerate} if $\mathrm{supp}(C)=\F_q^n$.

In the same way one can define the supports of a matrix or of a code in terms of the row-space, however this is not necessary for the scope of the paper.
See \cite{gorla2021rank} for a detailed analysis of the various definitions of rank-support proposed in the literature.

  \begin{definition}
	Let $\Gamma$ be a basis of $\F_{q^m}$ over $\F_q$.
	For each $x \in \F_{q^m}^n$, we write $\Gamma(x)$ to denote the $n \times m$ matrix over $\F_q$ whose $i$th row is the coordinate vector of
	the $i$th coefficient of $x$ with respect to the basis $\Gamma$.
	The \textbf{rank} of $x$, denoted by $\rk(x)$, is the rank of the matrix $\Gamma(x)$. 
\end{definition}
It is easy to see that the rank of $x$ is independent of the choice of the basis $\Gamma$.

\begin{definition}
An \textbf{$\F_{q^m}$-linear (rank-metric) code} or a \textbf{vector code} $C$ is an $\F_{q^m}$-subspace of $\F_{q^m}^n$.
Its \textbf{minimum rank distance} is
$$\mathrm{d}(C)=\min\{\rk(x) : x\in C, \; x\ne 0\}.$$
We say that $C$ is an $\F_{q^m}$-$[n,k,d]$ rank-metric code if it has ${\F_{q^m}}$-dimension $k$ and minimum rank distance $d$. If $d$ is not known we simply write that $C$ is an $\F_{q^m}$-$[n,k]$ rank-metric code. 
A \textbf{generator matrix} of $C$ is a matrix $\smash{G \in \F_{q^m}^{k \times n}}$ whose rows generate $C$ as an $\F_{q^m}$-linear space.
The code $C^\perp$ denotes the \textbf{dual code} of $C$ with respect to the standard dot product on $\F_{q^m}^n$.  
\begin{notation}
    By abuse of notation, we write $C^\perp$ to denote the dual code of $C$ where: 
   	\begin{enumerate}
   	    \item $C \leq \F_q^{n \times m}$ and $C^\perp=\{X \in \F_q^{n \times m}: \mathrm{Tr}(XY^T)=0 \:\forall \:Y \in C\}$;
	\item $C \leq \F_{q^m}^n$ and $C^\perp=\{x \in \F_{q^m}^n :x \cdot y := \sum_{i=1}^n x_i y_i=0 \:\forall \: y \in C \}$.
   	\end{enumerate}
   	\end{notation}
Let $U \leq \Fq^n$ and let $x \in C$. We say that $U$ is a {\bf support} of $x$ if $U$ is the column space of $\Gamma(x)$, where $\Gamma$ is a basis for the field extension $\F_{q^m}/\F_q$ and we write $\supp(x) = U$. In fact, the support does not depend on the choice of $\Gamma$. The \textbf{ support} of $C$ is defined to be
$$ \supp(C)=\sum_{x\in C}\supp(x).$$
We say that $C$ is \textbf{non-degenerate} if $\supp(C)=\F_q^n$.
\end{definition}

In \cite{alfarano2021linear} the following characterization of the non-degeneracy property of vector codes has been provided, which we recall in order to use it in sections \ref{sec:crit_theorem} and \ref{sec:lower_bound}.

\begin{proposition}\cite[Proposition 3.2]{alfarano2021linear}\label{prop:matrix_nondeg}
Let $C$ be an $\mathbb{F}_{q^m}$-$[n,k]$ rank-metric code. $C$ is non-degenerate if and only if the $\F_q$-span of the columns of any generator matrix of $C$ has $\Fq$-dimension~$n$.
\end{proposition}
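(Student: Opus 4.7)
The plan is to characterize $\dim_{\F_q}V$, where $V\le\Fqm^k$ denotes the $\F_q$-span of the columns of a generator matrix $G$ of $C$, and then show that this dimension equals $n$ precisely when $C$ is non-degenerate. First I would observe that $\dim_{\F_q}V$ is an invariant of $C$: any two generator matrices differ by multiplication on the left by some $S\in\GL_k(\Fqm)$, and $S$, being $\Fqm$-linear and invertible, is in particular an $\F_q$-linear automorphism of $\Fqm^k$, so it preserves $\F_q$-dimensions of subspaces. Hence it is enough to fix a single $G\in\Fqm^{k\times n}$ with columns $g_1,\dots,g_n\in\Fqm^k$.

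Next I would set up the $\F_q$-linear map
$\psi\colon\F_q^n\longrightarrow\Fqm^k$, $\psi(a)=Ga^\top=\sum_i a_i g_i$,
whose image is exactly $V$. A vector $a\in\F_q^n$, viewed inside $\Fqm^n$, lies in $C^\perp$ iff $xGa^\top=0$ for every $x\in\Fqm^k$, equivalently iff $Ga^\top=0$; so $\ker\psi=C^\perp\cap\F_q^n$. Rank--nullity then gives
$$\dim_{\F_q}V \;=\; n-\dim_{\F_q}\!\bigl(C^\perp\cap\F_q^n\bigr),$$
and the statement reduces to the claim that $C$ is non-degenerate if and only if $C^\perp\cap\F_q^n=\{0\}$.

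For this claim I would fix an $\F_q$-basis $\Gamma=\{\gamma_1,\dots,\gamma_m\}$ of $\Fqm$ and use the decomposition $\Fqm^n=\bigoplus_{j=1}^m\gamma_j\F_q^n$: each $c\in C$ has a unique expression $c=\sum_j\gamma_j c^{(j)}$ with $c^{(j)}\in\F_q^n$. Unpacking the definition of $\Gamma(c)$, whose $(i,j)$-entry is the $j$-th coordinate of $c_i$, one sees that its columns are precisely $c^{(1)},\dots,c^{(m)}$, so $\supp(c)=\langle c^{(1)},\dots,c^{(m)}\rangle_{\F_q}$ and $\supp(C)$ is the $\F_q$-span of all such $c^{(j)}$ as $c$ ranges over $C$. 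For $a\in\F_q^n$, the identity $a\cdot c=\sum_j\gamma_j(a\cdot c^{(j)})$ combined with the $\F_q$-linear independence of $\Gamma$ and the fact that $a\cdot c^{(j)}\in\F_q$ shows that $a\in C^\perp$ iff $a\cdot c^{(j)}=0$ for all $j$ and all $c\in C$, i.e.\ iff $a\in\supp(C)^\perp$. Therefore $C^\perp\cap\F_q^n=\supp(C)^\perp$, and this space vanishes precisely when $\supp(C)=\F_q^n$, which is the definition of non-degeneracy.

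The only real subtlety is matching the coordinate decomposition $\Fqm^n=\bigoplus_j\gamma_j\F_q^n$ with the definition of $\supp$ through the matrix $\Gamma(c)$; once that identification is in place, the proof is a direct linear-algebra computation combining rank--nullity with the elementary orthogonality decomposition above.
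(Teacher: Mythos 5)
Your proof is correct. Note that the paper itself does not give a proof of this statement: it is quoted directly from \cite{alfarano2021linear}, so there is no in-text argument to compare against. That said, the argument you give is a clean, self-contained justification. The key observations all hold: the $\F_q$-dimension of the column span is invariant under the choice of generator matrix because any two generator matrices differ by an $\F_{q^m}$-linear (hence $\F_q$-linear) automorphism $S\in\GL_k(\F_{q^m})$; the map $\psi:a\mapsto Ga^\top$ has image $V$ and kernel $C^\perp\cap\F_q^n$, since $a\in C^\perp$ iff $Ga^\top=0$; and the coordinate decomposition $c=\sum_j\gamma_j c^{(j)}$ shows $a\cdot c=\sum_j\gamma_j(a\cdot c^{(j)})$ with $a\cdot c^{(j)}\in\F_q$, so that $C^\perp\cap\F_q^n=\supp(C)^\perp$ (orthogonal complement taken inside $\F_q^n$). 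Combining rank--nullity with this identity in fact yields the slightly stronger statement $\dim_{\F_q}V=\dim_{\F_q}\supp(C)$, from which the claimed equivalence is immediate. This is essentially the same route taken in \cite{alfarano2021linear}, which also proceeds via the identification of the $\F_q$-kernel of a generator matrix with the orthogonal complement of the support.
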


We recall the notion of equivalence and the puncturing operation for both matrix and vector rank-metric codes. 
For a rank-metric code $C\leq\F_q^{n\times m}$ and matrices $A\in \F_q^{n\times n}$, $B\in\F_q^{m\times m}$ we define:
\begin{align*}
     ACB:=\{AMB : M \in C\} \text{ and } C^T:= \{ M^T : M \in C\}.
\end{align*} 
If $B=I$ we simply write $AC$ in the above and if $A=I$ we write $CB$.
If $n \neq m$, then a pair of rank-metric codes $C_1$ and $C_2$ are called \textbf{equivalent} if there exist $A\in\GL(n,\F_q)$ and $B\in\GL(m,\F_q)$ such that $C_2=AC_1B$. If $n = m$, then the transpose operation for matrices also preserves rank. For this reason it is often included in the definition of equivalence. That is, if $n=m$ then $C_2$ and $C_1$ are called equivalent if $C_2 = AC_1B$ or $C_2^T = AC_1B$ for some invertible matrices $A,B$. If $C_1,C_2$ are two $\F_{q^m}$-$[n,k]$ codes, then they are said to be equivalent if there is a matrix $A\in\GL(n,\F_q)$ such that $C_2=C_1A$. 

Let $M\in\F_q^{n\times m}$. For any $J\subseteq [n]$, satisfying $0 < |J| < n$, we denote by $M_J\in\F_q^{(n-|J|)\times m}$ the submatrix obtained from $M$ by deleting the rows indexed by $J$. 

\begin{definition}
    Let $A\in\GL_n(q)$ and $J\subseteq [n]$ be such that $0<|J|<n$. We define the \textbf{punctured} code of $C$ with respect to $A$ and $J$ to be
    $$\Pi(C,A,J):=\{(AM)_J : M\in C\}\leq \F_q^{(n-|J|)\times m}.$$
\end{definition}

    \begin{definition}
    For arbitrary $W \leq \Fq^n$, we define 
    $C(W):=\{ M \in C : \colsp(M) \leq W\} \leq C$
    to be the {\bf shortened subcode} of $C$ with respect to $W$.
    \end{definition}
    For a coordinate-free description, we may consider $C$ to be a set
    of bilinear maps $b: \Fq^n \times \Fq^m \longrightarrow \Fq$. 
    Then $C$ is realised as an $\Fq$-$[n\times m,k]$ matrix code as $\{(b(u,v): u \in \B_n, v \in \B_m): b \in C\}$ for some bases 
    $\B_n,\B_m$ of $\Fq^n$ and $\Fq^m$, respectively. 
    Given $\Fq$-vector spaces $U$ and $V$, denote the set of bilinear maps $b :U \times V \longrightarrow \Fq$ by $\B(U,V)$.
    For each $b \in \B(U,V), \; y \in U$, let $f_{b,y}$ be
    the $\Fq$-linear map $f_{b,y} :U \longrightarrow \Fq$ defined by
    $f_{b,y}(x) = b(x,y)$ for all $x \in U$. We write
    $\bar{f}_{b,y}$ to denote the unique element of $U$ defined
    by $f_{b,y}(x) = x\cdot \bar{f}_{b,y}$ for all $x \in U$.
    Then for any $b \in \B(U,V)$, the support of $b$ is given by: 
    \[
       \supp(b):=\{\bar{f}_{b,y} : y \in V \} \leq U.
    \]
    If $C \leq \B(U,V)$, then $\displaystyle \supp(C) := \sum_{b \in C} \supp(b)$.
    \begin{definition}
        Let $U$ and $V$ be $\Fq$-vector spaces and let $C \leq \B(U,V)$.
        For each $T\leq U$ and $b \in \B(U,V)$, we write $b_{T} \in \B(T,V)$ to denote the restriction 
        of $b$ to $T \times V$, that is, $b_{T}(x,y) =b(x,y)$ for all
        $(x,y) \in T \times V$.
      The {\bf punctured code} of $C$ with respect to $T$ is defined to be:
    \[
    C|_T:=\{ b_{T} : b \in C\}.
    \]
    The {\bf shortened subcode} of $C$ with respect to $T$ is defined to be:
    \[
      C(T):=\{b \in C : \supp(b) \leq T\}.
    \]
    \end{definition}
    If $A \in \Fq^{t \times n}$ has rank $t$ and $C$ is an $\Fq$-$[n\times m,k]$ rank-metric code, then $AC:=\{AX : X \in C\}\leq \F^{t \times n}$ is a matrix code puncturing of $C$.
    Moreover, if $\rowsp(A)=T$, then $AC$ is the matrix code associated with $\hat{C}|_T \leq \B(T,\Fq^m)$ with respect to the basis of $T$ given by the rows of $A$, where $\hat{C}$ is the unique space of bilinear forms in $\B(\Fq^n,\Fq^m)$ whose matrix representations are the elements of $C$.
    \begin{lemma}\label{lem:punker}
        Let $U,V$ be $\Fq$-vector spaces of finite dimension and let $C\leq \B(U,V)$. 
        Let $T \leq U$. Then $C|_T$ and $C/C(T^\perp)$ are isomorphic.
    \end{lemma}
    \begin{proof}
        Let $\phi_T:C \longrightarrow C|_T$ be the $\Fq$-epimorphism defined by $\phi|_T(b)=b_T$ for all $b \in C$.
        Then 
        \begin{eqnarray*}
            \ker \phi_T &=& \{ b \in C : b_T=0\}\\
            & = & \{b \in C : f_{b,y}(x) = 0 \:\forall\: x \in T, y \in V\}\\
            & = & \{b \in C : \bar{f}_{b,y} \in T^\perp \:\forall\: y \in V\}\\ 
            &=& \{b \in C : \supp(b) \leq T^\perp \}\\
            &=& C(T^\perp),
        \end{eqnarray*}
        from which the result follows.
    \end{proof}
    It follows immediately that if $\rho$ is the rank function of the $q$-polymatroid $\M[C]$ for some $C \in \B(U,V)$, then $\rho(T)=\dim(C)-\dim(C(T^\perp))=\dim(C|_T)$ for all $T \leq U$.

    There is also a notion of a {\em shortening} of $C$ (see \cite[Definition 3.2]{BRSIAM17}); it is an isomorphic puncturing of a shortened subcode. 
   Let $U,V$ be $\Fq$-vector spaces of finite dimension and let $C\leq \B(U,V)$.
   Let $S,T\leq U$ such that $S \oplus T = U$. Then
   $$C(S^\perp)(T^\perp)=\{b \in C : \supp(b) \leq T^\perp \cap S^\perp\} = \{b \in C : \supp(b)=U^\perp\} = \{0\},$$ 
   so by Lemma \ref{lem:punker}, $C(S^\perp)$ and $C(S^\perp)|_T$ are isomorphic. 
    \begin{definition}
        Let $U,V$ be $\Fq$-vector spaces of finite dimension and let $C\leq \B(U,V)$.
        Let $T\leq U$ and let $S \leq U$ such that $S \oplus T = U$.
        We define the {\bf shortened code} of $C$ with respect to $S$ and $T$ to be
        $C(S^\perp)|_T$.
    \end{definition}

\begin{definition}
  Let $C$ be an $\F_{q^m}$-$[n,k]$ rank-metric code and $A\in\GL(n,\Fq)$. Let $J\subseteq [n]$, satisfying $0 < |J| < n$. We define the \textbf{punctured code} of $C$ with respect to $A$ and $J$ by 
  $$ \Pi(C,A,J) :=\{(cA)_{J} : c \in C\}\leq \F_{q^m}^{n-|J|}.$$
\end{definition} 
\begin{remark}\label{rem:pucture+nondegeneracy}
The punctured code of a non-degenerate $\F_{q^m}$-$[n,k]$ code is always non-degenerate, by Proposition \ref{prop:matrix_nondeg} (2). More generally, it is easy to see that the punctured code of a non-degenerate $\F_q$-$[n\times m,k]$ rank-metric code is also non-degenerate, for if $\supp(C)=\Fq^n$ then
$\supp(AC) = \Fq^n$ for any $A \in \GL(n,\F_q)$, so that $\supp(\Pi(C,A,J))=\Fq^{n-|J|}$ for any $J \subseteq [n]$.
\end{remark}

We now recall the relations between the parameters of a rank-metric code.
Indeed, a code cannot have large dimension and minimum distance at the same time. The following result by Delsarte expresses a trade-off between these quantities.

\begin{theorem}[The rank-metric Singleton bound; see \cite{de78}]
\label{thm:slb}
Let $C$ be an $\F_q$-$[n\times m,k]$ rank-metric code with $\mathrm{d}(C) \ge d$. We have
\begin{equation*} \label{singletonlikebound}
    k \le \max(m,n)(\min(m,n)-d+1).
\end{equation*}
\end{theorem}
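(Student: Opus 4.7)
The plan is to prove the Singleton-like Bound by means of a puncturing argument, which fits naturally with the construction $\Pi(C,A,I)$ already introduced in the excerpt. The main observation is that the linear map that deletes a prescribed set of rows of an $n \times m$ matrix is rank-decreasing in a very controlled way: deleting $d-1$ rows can only reduce the rank by at most $d-1$.

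First, I would reduce to the case $n \leq m$. Since transposition is an $\F_q$-linear bijection $\F_q^{n \times m} \to \F_q^{m \times n}$ that preserves rank, the transposed code $C^\top := \{M^\top : M \in C\}$ is an $\F_q$-$[m \times n, k]$ code with $\mathrm{d}(C^\top) = \mathrm{d}(C) \geq d$. Hence, up to replacing $C$ by $C^\top$, we may assume $n \leq m$, so that $\min(m,n) = n$ and $\max(m,n) = m$; the target bound becomes $k \leq m(n-d+1)$.

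Next, I would apply the puncturing construction of the paper with $A = I_n$ and any subset $I \subseteq [n]$ with $|I| = d-1$ (for instance $I = \{n-d+2, \dots, n\}$). This yields an $\F_q$-linear map
\[
\phi : C \longrightarrow \F_q^{(n-d+1) \times m}, \qquad \phi(M) = M_I,
\]
where $M_I$ denotes the submatrix obtained from $M$ by deleting the rows indexed by $I$. If $M \in \ker(\phi)$, then every row of $M$ outside $I$ is zero, so $M$ has at most $|I| = d-1$ nonzero rows. In particular, $\rk(M) \leq d - 1 < d$, and the hypothesis $\mathrm{d}(C) \geq d$ forces $M = 0$. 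Therefore $\phi$ is injective, and
\[
k = \dim_{\F_q} C \;\leq\; \dim_{\F_q} \F_q^{(n-d+1) \times m} \;=\; m(n-d+1) \;=\; \max(m,n)\bigl(\min(m,n)-d+1\bigr).
\]

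There is essentially no obstacle here: the only ingredient beyond linear algebra is the elementary fact that a matrix with at most $s$ nonzero rows has rank at most $s$. The only point that requires a moment of care is the initial reduction by transposition, which is needed so that one truly deletes $d-1$ rows from the \emph{shorter} side and obtains the factor $\max(m,n)$ rather than $\min(m,n)$ in front of $(\min(m,n)-d+1)$.
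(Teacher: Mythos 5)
The paper states this bound without proof, simply citing Delsarte~\cite{de78}; Delsarte's original derivation goes through the association-scheme machinery for bilinear forms. Your argument is a correct, self-contained, and more elementary alternative: reduce to $n \le m$ by transposition (which preserves rank and hence minimum distance), then delete $d-1$ rows and observe that any codeword killed by this projection has at most $d-1$ nonzero rows, hence rank at most $d-1$, forcing it to be zero by the minimum-distance hypothesis; injectivity gives $k \le m(n-d+1)$. This is the standard ``shortening/puncturing'' proof of the Singleton-like bound (analogous to the classical Singleton bound for block codes) and it is sound. One small remark: you invoke the paper's punctured-code construction $\Pi(C,A,I)$, which is defined only for $0 < |I| < n$; when $d=1$ you have $|I|=0$, but then the bound $k \le mn$ is trivial, and when $C \neq \{0\}$ one automatically has $d \le \min(m,n) = n$ so $|I| = d-1 < n$. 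So the edge cases cause no real difficulty, but it is cleaner to simply define the row-deletion map $\phi$ directly rather than lean on Definition of $\Pi$.
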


One of the most studied families of rank-metric codes are those having the maximum possible dimension for a given minimum distance. 

\begin{definition}
Let $C$ be $\F_q$-$[n\times m,k,d]$ rank-metric code. 
$C$ is called a \textbf{maximum rank distance}  (\textbf{MRD}) code
if its parameters attain the bound 
of Theorem~\ref{thm:slb} with equality, i.e.
$$d= \min(n,m)-\frac{k}{\max(n,m)}+1.$$
$C$ is called a quasi-MRD (QMRD) code if $\max(n,m)$ does not divide $k$ and
$$d= \min(n,m)-\left\lceil \frac{k}{\max(n,m)} \right\rceil+1.$$
If $C$ is QMRD and $C^\perp$ is QMRD then $C$ is called a {\bf dually}-QMRD (DQMRD) code.
\end{definition}

MRD codes have several notable properties. Delsarte showed that MRD codes exist for all values of $q, n, m$, and all $d$; see \cite{de78}. In the following lemma we summarize other relevant properties of these codes; see \cite{de78,dela,gorla2021rank}. 

\begin{lemma}\label{MRD properties}
Let $C$ be an $\F_q$-$[n\times m,k,d]$ MRD code or a DQMRD code. 
Then the following properties hold.
\begin{enumerate}
    \item $C$ is non-degenerate. 
    Indeed, a degenerate code can be isometrically embedded into $\F_q^{(n-1)\times m}$, in which case the Singleton bound would be violated.
    \item The weight distribution of $C$ is known and determined by $q,n,m,k$.  
    \item If $C$ is MRD then $C^{\perp}$ is an MRD code with rank distance equal to $\min(m, n) -d+2$.
    \item 
    If $C$ is DQMRD then $C^{\perp}$ has rank distance $d^\perp=\min(m, n) -d+1$.
\end{enumerate}
\end{lemma}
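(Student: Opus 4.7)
The plan for part (1) is a direct Singleton bound argument. If $C$ is degenerate, then $\supp(C)$ lies in some hyperplane of $\F_q^n$, so after a change of basis every matrix of $C$ has zero final row. This yields an $\F_q$-linear rank-preserving embedding $C \hookrightarrow \F_q^{(n-1)\times m}$. Applying Theorem \ref{thm:slb} to the embedded code gives $k \leq \max(n-1,m)(\min(n-1,m)-d+1)$, which directly contradicts the equality $k = \max(n,m)(\min(n,m)-d+1)$ in the MRD case, and contradicts the strict lower bound $k > \max(n,m)(\min(n,m)-d)$ forced by the QMRD definition in the DQMRD case; the subcases $n \leq m$ and $n > m$ are handled identically (in the latter by transposing to swap $n$ and $m$).

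For part (2), the argument is Delsarte's classical MacWilliams-type computation. One shows that the number of codewords of each given rank is forced by the parameters $q,n,m,k$ by analysing the dimensions of intersections $C \cap V$, where $V$ ranges over the column-support subspaces of $\F_q^n$, and then inverting via M\"obius inversion on $\mL(\F_q^n)$ (using the formula for $\mu(U,V)$ recorded earlier). For an MRD code these intersections achieve the Singleton bound with equality on every subspace, which pins the weight distribution down uniquely. The DQMRD case then follows from the rank-metric MacWilliams identity together with the assumption that both $C$ and $C^\perp$ are QMRD, whose near-extremal parameters likewise rigidify the inversion.

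Parts (3) and (4) reduce to a dimension calculation against Theorem \ref{thm:slb}. Taking $n \leq m$ without loss of generality, in the MRD case $\dim C^\perp = nm-k = m(d-1)$, and Theorem \ref{thm:slb} applied to $C^\perp$ yields $d^\perp \leq n-d+2$. Equality is supplied by the rigidity of the weight distribution in part (2): the enumerator of $C^\perp$ computed via MacWilliams has no nonzero terms of rank less than $n-d+2$, so $C^\perp$ is MRD. For the DQMRD case the same dimension computation gives $m(d-1) < nm-k < md$, hence $\lceil (nm-k)/m \rceil = d$, and since $m \nmid k$ we conclude $d^\perp = n-d+1$ by reapplying the QMRD formula to $C^\perp$.

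The main obstacle is part (2): the weight-distribution formula is not a formal consequence of the Singleton equality but requires the genuine combinatorial input of M\"obius inversion over $\mL(\F_q^n)$, as in Delsarte's original derivation in \cite{de78}. Once this is in hand, parts (3) and (4) follow mechanically, and part (1) is a one-line application of Theorem \ref{thm:slb}.
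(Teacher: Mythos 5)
The paper does not actually prove this lemma: it is presented as a summary of known facts with citations to \cite{de78,dela,gorla2021rank}, with the only argumentative content being the one-line hint for part (1) that is embedded in the statement itself. Your proposal essentially reproduces that one-line hint for (1), sketches Delsarte's M\"obius-inversion argument for (2) while ultimately deferring to \cite{de78}, and gives a parameter calculation for (3)--(4), so you are on the same page as the authors in spirit.

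Two small caveats are worth flagging. First, your remark that the $n>m$ subcase of (1) can be ``handled identically by transposing'' is not right as stated: transposing replaces column supports by row supports, so it does not preserve the degeneracy hypothesis, and in any case the embedding reduces $n$, not $\min(n,m)$. For MRD this does not matter, since in every configuration of $n,m$ one gets $k = \max(n,m)(\min(n,m)-d+1) > \max(n-1,m)(\min(n-1,m)-d+1)$, so the Singleton bound on the embedded code fails; you should just check the cases $n-1\geq m$ and $n-1<m$ directly rather than invoking a transpose. Second, and more substantively, for DQMRD the strict bound $k > \max(n,m)(\min(n,m)-d)$ only yields a contradiction with the embedded Singleton bound when $n\leq m$ or $n=m+1$; if $n>m+1$ one has $(n-1)(m-d+1) > n(m-d)$, so no contradiction arises and the one-line argument does not close. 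This gap is inherited from the paper's own terse hint, so it is not really a defect of your write-up relative to the paper, but if you want a self-contained proof of (1) for DQMRD codes you will need an additional idea (or a reference to \cite{cotardo20}) rather than the bare Singleton computation.
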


\subsection{Representable {\em q}-Polymatroids}
It is known that an $\mathbb{F}_{q}$-$[n\times m,k]$ rank-metric code induces a $(q,m)$-polymatroid; see \cite{gorla2019rank, shiromoto19}. One way to describe this correspondence is as follows.
	\begin{definition}\label{def:codepoly}
	   Let $m$ be a positive integer and let $C$ be an $\mathbb{F}_{q}$-$[n\times m,k]$ rank-metric code.
	   For each subspace 
	   $U \leq E$, we define
	   $$C_U:=C(U^\perp)=\{A \in C : \colsp(A) \leq U^\perp\} \text{ and }
	   C_{=U}:=\{A \in C: \colsp(A) = U^\perp\}.$$ 
	   Let $\rho: \mL (E) \longrightarrow \N_{\geq 0}$ be defined by
	   	$\displaystyle \rho(U):=k-\dim(C_U).$
	   	Then $(E,\rho)$ is a $(q,m)$-polymatroid \cite[Theorem 5.3]{gorla2019rank} and we denote it by $\M[C]$.	
	\end{definition}

$(q,r)$-polymatroids arising from rank-metric codes are called \textbf{representable}. We point out that in \cite[Theorem 4.9]{GLJ} the authors give examples of $(q,r)$-polymatroids that are not representable.

Each vector rank-metric code determines a $q$-matroid (a $(q,1)$-polymatroid), as follows.
	
	\begin{definition}\label{def:vector_codes}
	   Let 
	   $C$ be an $\mathbb{F}_{q^m}$-$[n,k]$ rank-metric code.
	   For every 
	   $W \leq \Fq^n$, 
	   we define
	   $$C_W:=\{x \in C : \supp(x) \leq W^\perp\} \text{ and }
	   C_{=W}:=\{x \in C: \supp(x) = W^\perp\}.$$ 
	   Let $\rho: \mL (E) \longrightarrow \N_{\geq 0}$ be defined by
	   	$\displaystyle \rho(W):=k-\dim_{\Fqm}(C_W).$
	   Then $(E,\rho)$ is a $q$-matroid \cite[Theorem 24]{JP18} and we also denote it by $\M[C]$. 	
	\end{definition}
    In \cite{degen2024most}, it is shown that most $q$-matroids are not representable, i.e. they do not arise from $\F_{q^m}$-$[n,k]$-rank metric codes.
    
    Note that if Definition \ref{def:codepoly} is applied to a vector rank-metric code to obtain a $(q,m)$-polymatroid, then this $(q,m)$-polymatroid is a scaling (by a factor of $m$) of the $q$-matroid constructed from the same code using Definition \ref{def:vector_codes}.
  
	It is also common to construct a $q$-matroid starting from the generator matrix of an $\mathbb{F}_{q^m}$-$[n,k]$ rank-metric code $C$; see for instance \cite{JP18, gorla2019rank}. Let $G$ be a $k \times n$ generator matrix for $C$ and for every $U\in\mL(E)$, let $A^U$ be a matrix whose columns form a basis of $U$. Then the map 
\begin{equation*}\label{eq:rank1}
    \rho_G:\mL(E) \to \Z, \  U\mapsto \rk(G A^U)
\end{equation*}
is the rank function of a $q$-matroid. However, it is easy to see that $\rho_G$ and the rank function $\rho$ from Definition \ref{def:vector_codes} are the same; see for instance \cite[Corollary 23]{JP18}.

 We make some remarks on code equivalence. If $C$ is a matrix rank-metric code, then $\M[A C B]$ and $\M[C]$ are lattice-equivalent for any $A \in \GL(n, \F_q)$ and $B \in \GL(m,\F_q)$. This means, in particular, that the characteristic polynomials of $\M[A C]$ and $\M[C]$ are identical. If $C$ is a vector rank-metric code, then the multiplication by $A\in\GL(n,\F_q)$ is on the right, but the result stays the same, i.e. the characteristic polynomials of $\M[C A]$ and $\M[C]$ are identical;
 see \cite[Lemma 25]{byrneweighted}.    

We close this preliminary section with two further observations on the connections between the minors of representable $q$-polymatroids and their corresponding rank-metric codes. The fact that the $q$-polymatroids of the puncturings and shortenings of a matrix code $C$ are (up to equivalence) the restrictions and contractions, respectively, of $\M[C]$ was shown in \cite[Theorem 5.5]{GLJ}. 
 \begin{lemma}\label{lem:pun}
    Let $U,V$ be $\Fq$-vector spaces of finite dimension and let $C\leq \B(U,V)$. Let $T \leq U$.
	Then $\M[C]|_T=\M[C|_T]$.
\end{lemma}
\begin{proof}
    Let $\M[C]= (\mL(U),\rho)$. Then $\M[C]|_T=(\mL(T),\rho_T)$, where $\rho_T(S)=\rho(S)$ for all $S \leq T$.
	On the other hand, we have $C|_T = \{b_T : b \in C\}$ and
    $\M[C|_T] = (\mL(T),\tilde{\rho})$, where $\tilde{\rho}(S) = \dim(C|_T|_S) = \dim(C|_S) = \rho(S) = \rho_T(S)$ for all $S\leq T$.    
\end{proof}
\begin{lemma}\label{lem:sho}
Let $U,V$ be $\Fq$-vector spaces of finite dimension and let $C\leq \B(U,V)$. Let $S,T$ be subspaces of $U$ such that $S\oplus T=U$. 
Then $\M[C]/S \cong \M[C(S^\perp)|_T]$.
\end{lemma}
\begin{proof}
   For each $W \leq T$, we have $W \cap S = \{0\}$ and hence we have a lattice-isomorphism 
   $\phi: \mL(T) \longrightarrow \mL([S,U])$ defined by $\phi(W) = S+W$ for all $W \leq T$.
   By Lemma \ref{lem:punker}, $C(S^\perp)|_T \cong C(S^\perp)/C(S^\perp)(T^\perp)=C(S^\perp)/C((S+T)^\perp)=C(S^\perp)$. 
   Let $\M[C] = (\mL(U),\rho)$
   and consider the $q$-polymatroid $\M[C(S^\perp)|_T]=(\mL(T),\rho')$.
   For each $W\leq T$, we have
   \begin{align*}
       \rho'(W) 
                 =& \dim(C(S^\perp)|_T|_W)\\
                 =& \dim(C(S^\perp)|_W)\\
                 =& \dim(C(S^\perp)) - \dim(C(S^\perp)(W^\perp))\\
                 =& \dim(C(S^\perp)) - \dim(C((S+W)^\perp))\\
                 =& \dim(C) - \dim(C((S+W)^\perp))- (\dim(C)-\dim(C(S^\perp)))\\
                 =& \rho(S+W)-\rho(S)\\
                 =& \rho_{[S,U]}(\phi(W)).
   \end{align*}
\end{proof}
Therefore, if $C$ is an $\Fq$-$[n\times m,k]$ matrix code and $T\leq \Fq^n$ has dimension $t$, then $\M[C]|_T$ and $\M[A C]$ are equivalent for any $A \in \Fq^{t \times m}$ such that $\rowsp(A)=T$.
Moreover, $\M[C]/U$ and $\M[A C(U^\perp)]$ are lattice-equivalent if $U\oplus T=\Fq^n$.


\section{Properties of the Characteristic Polynomial}\label{sec:charpoly}

In this section we examine expressions of the characteristic polynomial of a weighted lattice in terms of the characteristic polynomials of its minors. 
Theorem \ref{th:chardecomp} is a generalisation to weighted lattices of a well-known recursive description of the characteristic polynomial of a matroid in terms of contractions and deletions. The interested reader is referred to \cite{kung95,whittle,bryl_oxley} for further reading.

We first give a preliminary result on the characteristic polynomial of a weighted lattice of height $2$.
\begin{lemma}\label{lem:diamond}
    Let $\mL$ have length equal to $2$ and let $e\in \mA$.
    Let $\mW=(\mL,f)$ be a weighted lattice.
    Then
    \[
    \pp(\mW;z) = \pp(\mW|_e;z)(\pp(\mW/e;z)+1) 
    - \sum_{\substack{ a \in \mA:\\a \neq e}} \pp(\mW/a;z).
    \]
\end{lemma}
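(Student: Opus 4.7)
The plan is to prove the identity by a direct computation: since $\mL$ has height $2$, every element is either $\zero$, an atom, or $\one$, so both sides of the equation can be expanded explicitly and compared term by term.

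First, I would record the M\"obius values on $\mL$. Since $\mL = \{\zero\} \cup \mA \cup \{\one\}$, we have $\mu(\zero,\zero)=1$, $\mu(\zero,a) = -1$ for every atom $a\in\mA$, and then applying the recursion \eqref{eq:mob} gives
\[
\mu(\zero,\one) = -\Bigl(1 + \sum_{a\in\mA}(-1)\Bigr) = |\mA|-1.
\]
Substituting into the definition yields
\[
\pp(\mW;z) = z^{f(\one)} - \sum_{a\in\mA} z^{f(\one)-f(a)} + (|\mA|-1).
\]

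Next, each of the weighted lattices $\mW|_e$, $\mW/e$, and $\mW/a$ (for $a\in\mA$) has height $1$, so it consists only of its bottom and its top. Directly from the definition of the characteristic polynomial,
\[
\pp(\mW|_e;z) = z^{f(e)}-1, \qquad \pp(\mW/e;z) = z^{f(\one)-f(e)}-1, \qquad \pp(\mW/a;z) = z^{f(\one)-f(a)}-1.
\]
Then I would compute
\[
\pp(\mW|_e;z)\bigl(\pp(\mW/e;z)+1\bigr) = (z^{f(e)}-1)\,z^{f(\one)-f(e)} = z^{f(\one)} - z^{f(\one)-f(e)},
\]
and
\[
\sum_{a\in\mA,\,a\ne e}\pp(\mW/a;z) = \sum_{a\in\mA,\,a\ne e} z^{f(\one)-f(a)} - (|\mA|-1).
\]
Subtracting gives exactly $z^{f(\one)} - \sum_{a\in\mA}z^{f(\one)-f(a)} + (|\mA|-1) = \pp(\mW;z)$, proving the identity.

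There is no real obstacle here; the only mild subtlety is confirming that in a height-$2$ lattice every atom is covered by $\one$ (so that $\mW/e$ and $\mW/a$ are indeed height-$1$ weighted lattices) and that no other elements exist. Once the structure of $\mL$ is pinned down, the proof is just bookkeeping with the explicit formulas above.
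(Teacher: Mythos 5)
Your proof is correct and follows essentially the same route as the paper's: both expand $\pp(\mW;z)$, $\pp(\mW|_e;z)$, and the contractions explicitly using the M\"obius values on a height-$2$ lattice and then verify the identity by direct bookkeeping. The only cosmetic difference is the direction of the algebra — you expand the right-hand side and simplify to $\pp(\mW;z)$, while the paper rearranges terms and factors the residual as $\pp(\mW/e;z)\pp(\mW|_e;z)$.
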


\begin{proof}
    Let $\mu$ be the M\"obius function of $\mL$.
    Since $\mu(\zero,a)=-1$ for each atom $a$ of $\mL$, $\mu(\zero,\zero)=1$, and $\mu(\zero,\one)=|\mA|-1$, we see that 
    $\displaystyle \pp(\mW;z)=z^{f(\one)} - \sum_{a\in \mA} z^{f(\one)-f(a)} +|\mA|-1$.
    Furthermore, for every atom $a$ of $\mL$ we have:
    $\pp(\mW|_a;z) = z^{f(a)}-1$ and $\pp(\mW/a;z) = z^{f(\one)-f(a)}-1$.
    Therefore,
    \begin{align*}
        \pp(\mW;z)-\pp(\mW|_e;z)+\sum_{\substack{a\in \mA:\\a \neq e}} \pp(\mW/a;z)
        &= z^{f(\one)} - z^{f(e)} -z^{f(\one)-f(e)}+1\\
        &=(z^{f(\one)-f(e)}-1)(z^{f(e)}-1)\\
        &=\pp(\mW/e;z)\pp(\mW|_e).
    \end{align*}
    The result now follows.
\end{proof}

We now show that Lemma \ref{lem:diamond} holds for weighted lattices of any height. We will then use this theorem to deduce several other results on the characteristic polynomial. We remind the reader that $\mL$ is assumed to have finite length.
 

\begin{theorem}\label{th:chardecomp}
    Let $\mL$ be a lower semimodular lattice.
    Let $\mW=(\mL,f)$ be a weighted lattice and
    let $H \in \mH$. Then
    \[
        \pp(\mW;z) = \pp(\mW|_H;z)(\pp(\mW/H;z)+1)- 
        \sum_{b \in \mA: b \nleq H } \pp(\mW/b;z).
    \]
\end{theorem}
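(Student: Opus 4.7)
The strategy is to expand the defining sum $\pp(\mW;z) = \sum_{A \in \mL} \mu(\zero, A)\, z^{f(\one)-f(A)}$ and split it according to whether $A \leq H$ or $A \nleq H$. Because $H$ is a coatom of $\mL$, the interval $[H,\one]$ is just $\{H,\one\}$, so $\pp(\mW/H;z) = z^{f(\one)-f(H)} - 1$. Factoring $z^{f(\one)-f(H)}$ out of the $A \leq H$ contribution then gives
\[
\sum_{A \leq H} \mu(\zero, A)\, z^{f(\one)-f(A)} \;=\; z^{f(\one)-f(H)}\,\pp(\mW|_H;z) \;=\; \pp(\mW|_H;z)\bigl(\pp(\mW/H;z)+1\bigr),
\]
which matches the first term on the right-hand side of the identity we want to prove.

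It then remains to show that $\sum_{A \nleq H} \mu(\zero, A)\, z^{f(\one)-f(A)} = -\sum_{b \in \mA,\, b \nleq H} \pp(\mW/b;z)$. Expanding the right-hand side using $\pp(\mW/b;z) = \sum_{A \geq b} \mu(b,A)\, z^{f(\one)-f(A)}$, interchanging the order of summation, and noting that no atom $b \nleq H$ can lie below any $A \leq H$, this reduces to the pointwise identity
\[
\mu(\zero, A) + \sum_{b \in \mA:\, b \leq A,\, b \nleq H} \mu(b, A) \;=\; 0 \qquad\text{for every } A \nleq H. \qquad (\star)
\]

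The main obstacle is proving $(\star)$, and I would carry this out inside the sublattice $[\zero, A]$, which inherits the modular and complemented structure of $\mL$. Since $H$ is a coatom and $A \nleq H$ we have $A \vee H = \one$, so modularity gives $h(A \wedge H) = h(A) + h(H) - h(\one) = h(A) - 1$; in other words $K := A \wedge H$ is a coatom of $[\zero, A]$. The dual form of Weisner's theorem — a short consequence of M\"obius inversion valid in any finite lattice — applied to $K$ inside $[\zero, A]$ yields
\[
\sum_{y \in [\zero, A]:\, y \wedge K = \zero} \mu(y, A) = 0.
\]
A second use of modularity forces $h(y) \leq 1$ whenever $y \wedge K = \zero$, because otherwise $h(y \vee K) = h(y) + h(K) - h(y \wedge K) = h(y) + h(A) - 1 > h(A)$ would contradict $y \vee K \leq A$. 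Hence the elements $y$ appearing in the sum are precisely $\zero$ together with the atoms $b \leq A$ with $b \nleq K$, equivalently $b \nleq H$, so the Weisner identity becomes exactly $(\star)$.
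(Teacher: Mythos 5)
Your proof is correct, but it takes a genuinely different route from the paper's. The paper argues by induction on the height of $\mL$: Lemma \ref{lem:diamond} supplies the base case, and the inductive step is built on the recursion of Lemma \ref{lem:chardec} together with a re-indexing of double sums whose crux is that, for $Y \nleq H$, the unique element of $\mH(Y)$ lying below $H$ is $Y \wedge H$ (a consequence of modularity). Your argument is direct and non-inductive: after factoring the $A \leq H$ contribution as $z^{f(\one)-f(H)}\pp(\mW|_H;z)$, you reduce the claim to the pointwise M\"obius identity $\mu(\zero,A) + \sum_{b \in \mA,\, b \leq A,\, b \nleq H}\mu(b,A) = 0$ for $A \nleq H$, and you prove it via the dual form of Weisner's theorem applied to the coatom $K = A\wedge H$ of $[\zero,A]$, with a second use of modularity to see that the only $y \leq A$ with $y \wedge K = \zero$ are $\zero$ and the atoms of $[\zero,A]$ not below $H$. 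Both proofs lean on modularity in essentially the same place (identifying $A\wedge H$ as a cover/coatom), so they apply to the same class of lattices; what your version buys is a one-shot argument that trades the induction and double-sum bookkeeping for a single invocation of a classical M\"obius-function identity, which is arguably cleaner, while the paper's inductive proof has the advantage of being self-contained and not citing Weisner.
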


\begin{proof}
   We prove the result by induction on the length of $\mL$, noting that the first step holds by 
   Lemma \ref{lem:diamond}. Now suppose that the result holds whenever $\mL$ has length at most $n-1$. Note that since $\mL$ is graded, we have $\h(H) <\h(\mL)$.
    Applying Lemma \ref{lem:chardec} and using the fact that $\pp(\mW/H;z) = z^{f(E)-f(H)}-1$, we have:
    \begin{align*}
        \pp(\mW;z) - \pp(\mW|_H;z)  = & z^{f(E)} - \sum_{\substack{B \in \mL :\\B > \textbf{0}}} \pp(\mW/B;z) -z^{f(H)} +\sum_{\substack{Y \in \mL:\\ \textbf{0}<Y \leq H}} \pp(\mW([Y,H]);z)\\
        =& z^{f(H)}\pp(\mW/H;z)+ \sum_{\substack{Y \in \mL: \\\textbf{0}<Y \leq H}} \pp(\mW([Y,H]);z)
        -\sum_{\substack{B \in \mL :\\ \textbf{0}<B \nleq H}} \pp(\mW/B;z) \\
        -&\sum_{\substack{B \in \mL :\\ \textbf{0}< B \leq H}} \pp(\mW/B;z). 
    \end{align*}
    For $B\leq H$, since $[\zero, B]$ is lower semimodular, we may apply the induction hypothesis to $\pp(\mW/B;z)$ and $H$ and obtain
    $$\pp(\mW/B;z) = \pp(\mW([B,H]);z)(\pp(\mW/H;z)+1)-\sum_{\substack{Y \in \mL :\\ B \lessdot Y \nleq H} } \pp(\mW/Y;z).$$
    Consider the right-most term in the above as we sum over the non-zero elements of $[\zero,H]$.
    If $Y\nleq H$, then since $\mL$ is lower semimodular
    we have that  
    $Y \wedge H \lessdot Y$, from which it follows that $\{B \in \mL : B \lessdot Y, B \leq H \} = \{ Y \wedge H\}$.
    Therefore, we obtain:
    \begin{align*}
        \sum_{\substack{B \in \mL :\\\textbf{0}<B \leq H}}\sum_{\substack{Y \in \mL: \\ B \lessdot Y \nleq H} } \pp(\mW/Y;z)
        &= \sum_{\substack{B \in [\zero,H]}}\sum_{\substack{Y \in \mL :\\ B \lessdot Y \nleq H} } \pp(\mW/Y;z)
        - \sum_{\substack{Y \in \mL :\\ \textbf{0} \lessdot Y \nleq H} } \pp(\mW/Y;z)\\
        &=\sum_{\substack{Y \in \mL :\\\textbf{0} <Y \nleq H}} \pp(\mW/Y;z)
         \sum_{\substack{B \in \mL:\\B \lessdot Y, B \leq H}} 1
        -\sum_{\substack{Y \in \mL : \\\textbf{0} \lessdot Y \nleq H }} \pp(\mW/Y;z)\\
        &=\sum_{\substack{Y \in \mL :\\\textbf{0}< Y \nleq H}} \pp(\mW/Y;z)
        -\sum_{\substack{Y \in \mL : \\\textbf{0} \lessdot Y \nleq H }} \pp(\mW/Y;z).
    \end{align*}
    Therefore, we have 
    \begin{align*}
        \sum_{\substack{B \in \mL :\\\textbf{0} <B \leq H}} \pp(\mW/B;z)=&
        \sum_{\substack{B \in \mL :\\\textbf{0}<B \leq H}} \pp(\mW([B,H]);z) + \sum_{\substack{B \in \mL :\\\textbf{0}< B \leq H}} \pp(\mW([B,H]);z)(\pp(\mW/H;z)) \\
        -& \sum_{\substack{Y \in \mL :\\ \textbf{0} <Y \nleq H}} \pp(\mW/Y;z)
        +\sum_{\substack{Y \in \mL : \\ \textbf{0} \lessdot Y \nleq H }} \pp(\mW/Y;z).
    \end{align*}
    We hence obtain the following identity, using Lemma \ref{lem:chardec}.
    \begin{align*}
        \pp(\mW;z) - \pp(\mW|_H;z)  
        =& \; \pp(\mW/H;z)
         \left(z^{f(H)}- \sum_{\substack{B \in \mL :\\ \textbf{0} <B \leq H}} \pp(\mW([B,H]);z)\right) 
        -\sum_{\substack{Y \in \mL : \\ \textbf{0} \lessdot Y \nleq H }} \pp(\mW/Y;z) \\
        =& \;\pp(\mW/H;z)\pp(\mW|_H;z) -\sum_{\substack{Y \in \mL : \\ \textbf{0} \lessdot Y \nleq H }} \pp(\mW/Y;z). 
    \end{align*}
\end{proof}

The following are immediate consequences of Theorem \ref{th:chardecomp} (with (2) following with the addition of Proposition \ref{prop:charloop}).

\begin{corollary}\label{cor:decomp123}
   Let $\mL$ be a lower semimodular lattice.
   Let $\mW=(\mL,f)$ be a weighted lattice
   and let $H \in \mH$. The following hold.
   \begin{enumerate}
       \item $\displaystyle \pp(\mW;z) = z^{f(\one)-f(H)}\pp(\mW|_H;z) -\sum_{\substack{y \in \mA : \\ y \nleq H }} \pp(\mW/y;z). $
       \item If $H$ is not a flat then $\displaystyle \pp(\mW;z) = \pp(\mW|_H;z) -\sum_{\substack{y \in \mA :\\ y \nleq H }} \pp(\mW/y;z). $
   \end{enumerate}
\end{corollary}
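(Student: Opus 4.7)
The plan is to deduce both statements directly from Theorem \ref{th:chardecomp} by simplifying the factor $\pp(\mW/H;z)+1$ under the hypothesis that $H$ is a coatom.

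First I would observe that since $H \in \mH$, the interval $[H,\one]$ in $\mL$ consists of exactly the two elements $H$ and $\one$. Computing from the definition of the characteristic polynomial,
\[
\pp(\mW/H;z) = \mu(H,H)z^{f(\one)-f(H)} + \mu(H,\one)z^{f(\one)-f(\one)} = z^{f(\one)-f(H)} - 1,
\]
so $\pp(\mW/H;z)+1 = z^{f(\one)-f(H)}$. Substituting this into the identity in Theorem \ref{th:chardecomp} yields part (1) immediately.

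For part (2), I would use the fact that if $H$ is not a flat of $\mW$ then there exists $A > H$ with $f(A) = f(H)$; since $H$ is a coatom, the only element strictly above $H$ is $\one$, forcing $f(\one) = f(H)$. Then $z^{f(\one)-f(H)} = 1$, and part (1) reduces to the desired identity
\[
\pp(\mW;z) = \pp(\mW|_H;z) - \sum_{\substack{y \in \mA :\, y \nleq H }} \pp(\mW/y;z).
\]

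Both steps are routine; there is no real obstacle here, as the work has already been done in Theorem \ref{th:chardecomp}. The only thing to verify carefully is the characterization of non-flatness for a coatom, which is immediate from the definition of flat together with the structure of the interval $[H,\one]$.
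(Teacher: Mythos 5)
Your proof is correct and matches the paper's approach, which simply states that these are immediate consequences of Theorem \ref{th:chardecomp}; the computation $\pp(\mW/H;z)=z^{f(\one)-f(H)}-1$ for a coatom $H$ is the same one used inside the proof of that theorem, and your handling of the non-flat case (monotonicity of $f$ together with $[H,\one]=\{H,\one\}$ forcing $f(\one)=f(H)$) is exactly what is needed.
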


In the case of an $\mL$-polymatroid $\M=(\mL,\rho)$, by the submodularity of $\rho$ we have the following result. Note that if $A \in \mL$ is a complement of $H \in \mH$, then by the lower semimodularity of $\mL$ we must have $\h(A) = 1$.

\begin{corollary}\label{cor:decomp2}
Let $H \in \mH$. Suppose that $H$ has a complement in $\mL$.
    \begin{enumerate}
    \item If a complement of $H$ is a loop of $\M$, then 
       $\displaystyle \pp(\M|_H;z) =\sum_{\substack{y \in \mA : \\y\nleq H }} \pp(\M/y;z). $
    \item Let $\mL$ be modular. If $H$ is a coloop of $\M$ and $e$ is a complement of $H$ in $\mL$, then 
    $$\displaystyle \pp(\M;z) = \pp(\M|_e;z)\pp(\M/e;z) -\sum_{\substack{y \in \mA : \\ y \nleq H , y \neq e}} \pp(\M/y;z). $$
    \end{enumerate}
\end{corollary}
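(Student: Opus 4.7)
The plan is to derive both parts from Corollary \ref{cor:decomp123} together with submodularity. Fix a complement $e$ of $H$ in $\mL(E)$, so that $e$ is an atom with $e \vee H = E$ and $e \wedge H = \zero$.

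For part (1), submodularity gives $\rho(E) \leq \rho(e) + \rho(H) = \rho(H)$, and monotonicity forces $\rho(H) = \rho(E)$, so $H$ is not a flat. Since $e$ is a loop, $\zero$ is likewise not a flat (as $e > \zero$ and $\rho(e) = \rho(\zero) = 0$), so Proposition \ref{prop:charloop} applied at $A = \zero$ yields $\pp(\M;z) \equiv 0$. Substituting both observations into the non-flat case of Corollary \ref{cor:decomp123}(2) produces the required identity.

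For part (2), $H$ being a coloop means $\rho(H) = \rho(E) - r$, and submodularity combined with boundedness forces $\rho(e) = r$; in particular $\pp(\M|_e;z) = z^r - 1$. The heart of the argument is showing $\pp(\M|_H;z) = \pp(\M/e;z)$, which I would establish by verifying that the map $\phi \colon [\zero, H] \to [e, E]$ given by $\phi(A) = A \vee e$ is a scaling-lattice equivalence between $\M|_H$ and $\M/e$. That $\phi$ is a lattice isomorphism with inverse $B \mapsto B \wedge H$ is a standard fact about modular complemented lattices. For the rank compatibility, submodularity applied to $(A, e)$ (using $A \wedge e = \zero$) gives $\rho(A \vee e) \leq \rho(A) + r$, and submodularity applied to $(H, A \vee e)$, together with the modular identity $(A \vee e) \wedge H = A \vee (e \wedge H) = A$ and the coloop relation $\rho(E) - \rho(H) = r$, gives the reverse inequality $\rho(A \vee e) \geq \rho(A) + r$. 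Hence $\rho(\phi(A)) - \rho(e) = \rho(A)$ for every $A \in [\zero, H]$, so the two characteristic polynomials agree. Finally, substituting into Corollary \ref{cor:decomp123}(1), singling out the $y = e$ summand, and collecting $(z^r - 1)\pp(\M/e;z) = \pp(\M|_e;z)\pp(\M/e;z)$ yields the desired identity.

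The main obstacle I expect is the two-sided rank computation underlying $\pp(\M|_H;z) = \pp(\M/e;z)$: the upper bound on $\rho(A \vee e)$ is immediate from submodularity, but the matching lower bound requires both the modularity of the subspace lattice (to evaluate $(A \vee e) \wedge H$) and the coloop hypothesis (to cancel $\rho(H)$ against $\rho(E)$). Everything else is a routine rearrangement of identities already available in the text.
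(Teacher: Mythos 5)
Your proof is correct and follows essentially the same route as the paper's: part (1) reduces to observing $\rho(E)=\rho(H)$ and $\pp(\M;z)\equiv 0$, then specializing the deletion--contraction identity; part (2) establishes that $\M|_H$ and $\M/e$ are lattice equivalent and collects terms. You supply one useful detail the paper leaves implicit: the paper asserts $\rho(A\vee e)-\rho(A)=r$ for $A\leq H$ without justification, whereas you give the full two-sided argument (submodularity for $\leq$, and submodularity applied to $(H,A\vee e)$ together with the modular identity $(A\vee e)\wedge H=A$ and the coloop relation for $\geq$), which is exactly the needed filling-in.
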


\begin{proof}
     If a complement of $H$ is a loop then $\rho(\one)-\rho(H)=0$, by the submodularity of $\rho$. Moreover, $\pp(\M;z)=0$ by Crapo's result (Proposition \ref{prop:charloop}). This yields (1).
     Suppose $H$ is a coloop of $\M$ and that $e$ is a complement of $H$ in $\mL$. 
     By the submodularity of $\rho$, we have that $\rho(e) = r$ and so $\pp(\M|_e;z)=z^r-1$. 
     Since $\mL$ is modular, we have that $H \wedge (e \vee A) = A$.
     Therefore, by the submodularity of $\rho$, 
     for every $A \in [\zero,H]$ we have that
     $\rho(A \vee e) - \rho(A) = \rho(A \vee e) - \rho(H \wedge (e \vee A)) \geq 
     \rho(H \vee (A \vee e)) - \rho(H) = \rho(\one)-\rho(H)= r=\rho(e).$ 
     On the other hand, again by the submodularity of $\rho$, we have that $\rho(A \vee e)-\rho(e) \leq \rho(A)$ for each $A \in [\zero,H]$. 
     Therefore, $\rho(A \vee e)-\rho(e) =\rho(A) $ for each $A \in [\zero,H]$.
    It follows that $\M/e$ and $\M|_H$ are lattice-equivalent $(\mL,r)$-polymatroids. This yields:
    \begin{align*}
        \pp(\M;z) &\;=\; (\pp(\M|_e;z)+1)\pp(\M/e;z) -\sum_{\substack{y \in \mA:\\ y \nleq H}} \pp(\M/y;z)\\
        &\;=\;\pp(\M|_e;z)\pp(\M/e;z) -\sum_{\substack{y \in \mA : \\ y \nleq H , y \neq e}} \pp(\M/y;z).
    \end{align*}
\end{proof}
\begin{remark}
    If we apply Corollary \ref{cor:decomp123} (1) to a polymatroid $\M$, then we obtain the statement
    \[
        \pp(\M;z) = z^{\rho(E)-\rho(H)}\pp(\M|_H;z)- \pp(\M/e;z),
    \]
    where $e$ is the unique complement of the coatom $H$ in the Boolean lattice. 
    In the case that $\M$ is a matroid, this returns the more familiar looking contraction-deletion identities
    that can be read in several texts; e.g. see \cite[Section 5]{kung95}.
    For example, if $\rho(E)-\rho(H)=0$, then $e$ is not an isthmus of $\M$ and we have
    $\pp(\M;z) = \pp(\M|_H;z)- \pp(\M/e;z)$, while if $\rho(E)-\rho(H)=1$, then $e$ is an isthmus and as in the proof
    of Corollary \ref{cor:decomp2} (2), we have $\pp(\M/e;z)=\pp(\M|_H;z)=\pp(M\backslash e;z)$, which gives $\pp(M;z) = (z-1)\pp(M\backslash e;z) = \pp(\M|_e;z)\pp(M\backslash e;z)$.
\end{remark}
\begin{remark}
    If we apply Corollary \ref{cor:decomp123} (1) to a $q$-matroid $\M$,
    we obtain the corrected statement of \cite[Theorem 5.14]{jany2022projectivization}. 
\end{remark}


\begin{example}\label{ex:qpolylattice}
In Figure \ref{fig:qpoly} (see also, Figure \ref{fig:qpolylattice}), we illustrate a $(2,3)$-polymatroid $\M$ defined on the subspace lattice~$\mL(\F_2^3)$.
The rank function $\rho$ of $\M$ is defined by a weighting on the edges of the Hasse diagram of $\mL(\F_2^3)$. Blue edges have a weighting of 3, black edges have a weighting of 2, red edges have a weighting of 1, and green edges have a weighting of 0. The rank of an element $A$ of $\mL(\F_2^3)$ is the sum of the weights of the edges of a maximal chain in $[\zero,A]$. 
We have:
\begin{gather*}
    \rho(\langle 110\rangle)=\rho(\langle 111\rangle)=2, \\
    \rho(\langle 100\rangle)=\rho(\langle 010\rangle)=\rho(\langle 011\rangle)=\rho(\langle 101\rangle)=\rho(\langle 001\rangle)=3, \\
    \rho(\langle 010,001\rangle)=\rho(\langle 101,010\rangle)=\rho(\langle 110,001\rangle)=4, \\
    \rho(\langle 100,010\rangle)=\rho(\langle 100,011\rangle)=\rho(\langle 100,001\rangle)=\rho(\langle 101,011\rangle)=\rho(\F_2^3)=5.
\end{gather*}

It is straightforward to check that the weighted lattice shown is indeed a $q$-polymatroid and that its characteristic polynomial is
$\pp(M;z)=z^5-2z^3-5z^2+6z=z(z-1)(z^3-z^2-z-6)$.
Note that $\mL(\F_2^3)$ has no coloops.
Let $H = \langle 100,010\rangle$. Then $\rho(\F_2^3)-\rho(H)=0$ and 
one can check that $\pp(M|_H;z)=z^5-z^3-2z^2+2$. Moreover, we have
$\pp(M/\langle 001 \rangle;z)=z^2-2z+1=(z-1)^2$,
$\pp(M/\langle 101 \rangle;z)=z^2-z=z(z-1)$,
$\pp(M/\langle 111 \rangle;z)=z^3-2z+1=(z-1)(z^2+z-1)$,
$\pp(M/\langle 011 \rangle;z)=z^2-z$.
It can be verified that

\begin{align*}
    \pp(M|_H;z) - (\pp(M/\langle 001 \rangle;z)+\pp(M/\langle 101 \rangle;z)+\pp(M/\langle 111 \rangle;z)+\pp(M/\langle 011 \rangle;z))&=\\
    z^5-z^3-2z^2+2 - (z^2-2z+1)-(z^2-z)-(z^3-2z+1)-(z^2-z)&=\\
    z^5-2z^3-z^2+6z &=
    \pp(M;z),
\end{align*}
as predicted by Corollary \ref{cor:decomp123} (1).
\begin{figure}[ht!]
    \centering
    \includegraphics[scale=1]{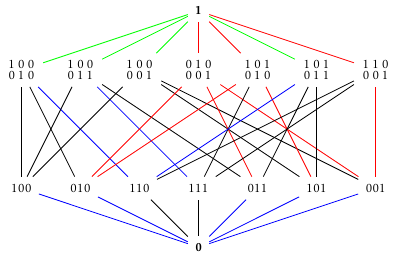}
    \caption{The $(2,3)$-polymatroid from Example \ref{ex:qpolylattice}. \;(\textcolor{blue}{3}, \textcolor{black}{2}, \textcolor{red}{1}, \textcolor{green}{0})}
    \label{fig:qpoly}
\end{figure}

\end{example}


\begin{example}\label{ex:semi}
We remark that the assumption that $\mL$ is lower semimodular is required in order for the statement of Theorem \ref{th:chardecomp} to hold. 
Consider the $\mL_i$-polymatroids, as shown in Figure~\ref{fig:semi}.
We have $\rho_1(a)=1,\rho_1(b)=\rho_1(c)=3, \rho_1(Y)=5,\rho_1(H)=4,\rho_1(\one)=5$, while $\M_2$ is dual to~$\M_1$. Neither $\mL_1$ nor $\mL_2$ is a modular lattice, however, $\mL_1$ is lower semimodular and $\mL_2$ is upper semimodular.
\begin{figure}[ht!]
\centering
\begin{tabular}{l r}
 \includegraphics[]{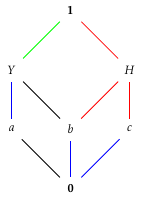}\:\:\:\: \:\:\:\:\:\:\:\: \:\:\:& \:\:\:\:\:\:\:\:\:\:\:\: \:\:\:\includegraphics[]{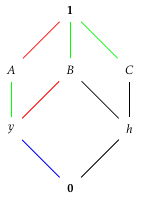}\\
 $\M_1=(\mL_1,\rho_1)$ & $\M_2=(\mL_2,\rho_2)$
\end{tabular}
\caption{Mutually dual lattice polymatroids from Example \ref{ex:semi}. \;(\textcolor{blue}{3}, \textcolor{black}{2}, \textcolor{red}{1}, \textcolor{green}{0})}
\label{fig:semi}
\end{figure}

We have $\pp(\M_1;z)=z^5-z^3-2z^2+z+1$, $\pp(\M_1|_H;z)=z^4-2z+1$, $\pp(\M_1|_Y;z)=z^5-z^3-z^2+1$, and $\pp(\M_1/a;z)=z^3-1$, $\pp(\M_1/b;z)=\pp(\M_1/c;z)=z^2-z$. From this it is easily verified that:
\[
z \pp(\M_1|_H;z) - \pp(\M_1/a;z) =\pp(\M_1|_Y;z) - \pp(\M_1/c;z)=\pp(\M_1;z),
\]
which is consistent with Corollary \ref{cor:decomp123} (1).
Now consider $\M_2$. We have
    $\pp(\M_2;z)=\pp(\M_2|_B;z)=z^4-z^2-z+1$, $\pp(\M_2|_A;z)=z^3-1$, and $\pp(\M_2|_C;z)=z^4-z^2$, 
    $\pp(\M_2/y;z)=0$, $\pp(\M_2/h;z)=z^2-1$.
    However, 
    \[
       z^{\rho_2(\one)-\rho_2(C)}\pp(\M_2|_C;z) -\pp(\M_2/y;z)= z^4-z^2 \neq z^4-z^2-z+1=\pp(\M_2;z).
    \]   
\end{example}


Critical problems for weighted lattices generally involve evaluations of the characteristic polynomial. In the next results, we consider evaluations of the characteristic polynomials of the minors of a weighted lattice. These are all corollaries of Theorem \ref{th:chardecomp}. They will be applied to obtain inequalities involving the critical exponent of a representable $q$-polymatroid in section~\ref{sec:crit_theorem}.

\begin{corollary}\label{cor:charrest}
   Let $\mL$ be a lower semimodular lattice.
   Let $\mW=(\mL,f)$ be a weighted lattice, 
   let $H \in \mH$ and let $g=f(\one)-f(H)$. 
   Let $\theta \in \R$ such that $\theta^g > 0$ and $\pp(\mW/b;\theta)\geq 0$ for every atom $b \leq H$.
   If $\pp(\mW;\theta)>0$ then $\pp(\mW|_H;\theta)>0$.
\end{corollary}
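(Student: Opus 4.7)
The plan is to derive the conclusion directly from Corollary \ref{cor:decomp123} (1), which provides the identity
\[
\pp(\mW;z) \;=\; z^{f(\one)-f(H)}\pp(\mW|_H;z) \;-\; \sum_{\substack{y \in \mA \,:\, y \nleq H }} \pp(\mW/y;z).
\]
Setting $z=\theta$ and solving for $\theta^{g}\pp(\mW|_H;\theta)$, I would rewrite this as
\[
\theta^{g}\,\pp(\mW|_H;\theta) \;=\; \pp(\mW;\theta) \;+\; \sum_{\substack{y \in \mA \,:\, y \nleq H }} \pp(\mW/y;\theta).
\]

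With this identity in hand the proof is essentially a sign check. Under the hypotheses (interpreting the condition on atoms as applying to those $y \nleq H$ that appear in the sum, as needed by the identity), each term $\pp(\mW/y;\theta)$ on the right-hand side is nonnegative, while $\pp(\mW;\theta)$ is strictly positive. Hence the right-hand side is strictly positive. Since $\theta^{g}>0$ by assumption, dividing both sides by $\theta^{g}$ yields $\pp(\mW|_H;\theta)>0$, as required.

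No further induction or M\"obius computation is needed, since all the combinatorial work is absorbed into Corollary \ref{cor:decomp123} (1). The only point requiring care is ensuring that the atoms appearing in the summation are exactly those singled out by the hypothesis, so that each term contributes nonnegatively; once this is verified, the argument is a one-line manipulation of a polynomial identity.
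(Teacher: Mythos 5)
Your proof is correct and follows the paper's own argument, which simply cites Corollary \ref{cor:decomp123}~(1) as giving the result directly. You are also right to flag the hypothesis: for the sign argument to go through, the nonnegativity assumption must apply to $\pp(\mW/y;\theta)$ for the atoms $y \nleq H$ appearing in the summation, not for atoms $b \leq H$ as the statement is literally worded; this is evidently a typographical slip in the paper, and your reading is the one consistent with the identity $\theta^{g}\,\pp(\mW|_H;\theta) = \pp(\mW;\theta) + \sum_{y \in \mA : y \nleq H} \pp(\mW/y;\theta)$.
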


\begin{proof}
   The statement follows as a direct consequence of Corollary \ref{cor:decomp123} (1).
\end{proof}

	\begin{corollary}\label{cor:charcontrest}
    Let $\mL$ be a modular, complemented lattice.
    Let $\mW=(\mL,f)$ be a weighted lattice.
    Suppose there exists $\theta \in \R_{>0} $ such that $\pp(\mW';\theta)\geq 0$ for every minor $\mW'$ of $\mW$.
    Let $T \in \mL$ such that $\pp(\mW/T;\theta)>0$. If $U$ is a complement of $T$, 
    then $\pp(\mW|_U;\theta) >0$. 
\end{corollary}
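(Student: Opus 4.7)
The plan is to induct on $h(T)$, treating the atom case $h(T)=1$ as the workhorse and reusing it in the inductive step. When $h(T)=1$, modularity of $\mL$ gives $h(U)=h(\one)-1$, so $U$ is a coatom of $\mL$. Applying Corollary \ref{cor:decomp123}(1) with $H=U$ and rearranging yields
\[
    \theta^{f(\one)-f(U)}\,\pp(\mW|_U;\theta) \;=\; \pp(\mW;\theta) \;+\; \sum_{b \in \mA:\, b \nleq U} \pp(\mW/b;\theta).
\]
Every term on the right is $\geq 0$ by the minor hypothesis, and since $T$ is an atom with $T \wedge U = \zero$ (so $T \nleq U$), $T$ itself contributes $\pp(\mW/T;\theta)>0$ to the sum. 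Thus the right-hand side is strictly positive, and as $\theta^{f(\one)-f(U)}>0$, so is $\pp(\mW|_U;\theta)$.

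For the inductive step with $h(T)\geq 2$, I would pick any atom $a$ with $a<T$ and reduce in two stages. First, in the modular interval $[a,\one]$, the element $a\vee U$ is a complement of $T$: by the modular identity (using $a\leq T$), $T \wedge (a\vee U) = a \vee (T\wedge U) = a$, and $T \vee (a\vee U) = T \vee U = \one$. Moreover $(\mW/a)/T = \mW/T$ as weighted lattices, and every minor of $\mW/a$ is a minor of $\mW$, so the nonnegativity hypothesis carries through. Since the height of $T$ in the ambient lattice of $\mW/a$ is $h(T)-1$, the induction hypothesis applied to $\mW/a$ with the complement pair $(T,\,a\vee U)$ gives $\pp(\mW([a,a\vee U]);\theta)>0$. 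Second, I switch to the weighted lattice $\mW|_{a\vee U}$ on $[\zero,a\vee U]$: there $a$ is an atom and, since $h(a\vee U)=h(U)+1$ by modularity, $U$ is a coatom, with $(a,U)$ a complementary pair in this sublattice. Because $(\mW|_{a\vee U})/a = \mW([a,a\vee U])$ has strictly positive characteristic polynomial at $\theta$ by the first stage, the atom case applied inside $\mW|_{a\vee U}$ produces $\pp(\mW|_U;\theta)>0$, closing the induction.

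The main obstacle is conceptual: $\mW/T$ and $\mW|_U$ are generally not lattice-equivalent as weighted lattices, even though the underlying intervals $[T,\one]$ and $[\zero,U]$ are isomorphic via the modular diamond map, so positivity cannot simply be transported from one to the other. The detour through $\mW/a$ and then $\mW|_{a\vee U}$ works because Corollary \ref{cor:decomp123}(1) provides a clean transfer formula at each end, while the modular identity keeps track of how the complementation propagates under successive contraction and restriction. Once this bookkeeping is in place, strict positivity falls out automatically from the minor-nonnegativity hypothesis.
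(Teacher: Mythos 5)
Your proof is correct and takes essentially the same route as the paper: both induct on $h(T)$, pick an atom $a$ below $T$, apply the inductive hypothesis to $\mW/a$ with the modular complement $a \vee U$ of $T$ in $[a,\one]$, and then conclude by applying the coatom decomposition (Corollary \ref{cor:decomp123}(1), equivalently Theorem \ref{th:chardecomp}) inside $\mW|_{a\vee U}$ with $U$ as coatom, using $\pp(\mW([a,a\vee U]);\theta)>0$ as the strictly positive summand. The only organizational difference is that you isolate $h(T)=1$ as a reusable base case and re-invoke it in the inductive step, while the paper anchors the induction at $T=\zero$ and writes out the coatom decomposition explicitly each time.
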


\begin{proof}
    Let $U$ be a complement of $T$. 
    We claim that $\pp(\mW|_U;\theta) >0$.
    We prove the result by induction on the height of $T$, noting that it holds true vacuously
    for $T=\zero$. 
    Let $a \leq T$, and let $\bar{U} = a \vee U$. Then $\bar{U} \wedge T = (a\vee U) \wedge T = a\vee (T\wedge U)=a$ since $\mL$ is modular. Moreover, 
    $\bar{U} \vee T = \one$, that is, $\bar{U}$ is a complement of $T$ in $[a,\one]$.
    Now $\mW/T$ and $(\mW/a)/T$ are lattice equivalent and so $\pp((\mW/a)/T;\theta) >0$.
    By the induction hypothesis, we have that 
    $0<\pp((\mW/a)|_{\bar{U}};\theta) =\pp(\mW([a,\bar{U}]);\theta) $.
    By the modularity of $\mL$, we have that $a \wedge U = \zero \lessdot a$ implies $U \lessdot a \vee U = \bar{U}$. Therefore, by Theorem \ref{th:chardecomp}, we have,
    \begin{equation}\label{eq:U}
       \theta^{f(\one)-f(U)}\pp(\mW|_U;\theta) =  \pp(\mW|_{\bar{U}};\theta)+
       \sum_{\substack{b \leq \bar{U}: \\b \nleq U}} \pp(\mW([b,\bar{U}]);\theta).
    \end{equation}
    Since $\pp(\mW';\theta)\geq 0$ for every minor $\mW'$ of $\mW$, we have that every term on the right-hand side of (\ref{eq:U}) is non-negative.
    Since $a$ is an atom of $\mL$ such that $a \leq \bar{U}$ and $a \nleq U$, we have that 
    $\pp(\mW([a,\bar{U}]);\theta)$, which is positive, is a summand of (\ref{eq:U}).
    Therefore $\pp(\mW|_U;\theta)>0$ and the result follows.
\end{proof}

\begin{corollary}\label{th:charpuncsho}
    Let $\mL$ be a modular, complemented lattice.
    Let $\mW=(\mL,f)$ be a weighted lattice.
    Suppose there exists $\theta \in \R_{>0} $ such that $\pp(\mW';\theta)\geq 0$ for every minor $\mW'$ of $\mW$.
    Let $U \in \mL$ such that 
    $\pp(\mW|_{U};\theta) >0$. 
    Then there exists a complement $V$ of $U$ in $\mL$ and an element 
    $T \leq V$ such that $\pp(\mW/T;\theta) >0$.
\end{corollary}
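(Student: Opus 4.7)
The plan is to induct on the codimension $d := h(\one) - h(U)$ of $U$ in $\mL$, quantifying over all weighted lattices satisfying the minors hypothesis at $\theta$. When $d=0$, so $U=\one$, the choice $V=T=\zero$ works vacuously: $\zero$ is a complement of $\one$ and $\pp(\mW/\zero;\theta)=\pp(\mW;\theta)=\pp(\mW|_U;\theta)>0$.

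Assume $d\geq 1$ and fix an atom $a\in\mA$ with $a\nleq U$; set $U':=U\vee a$, a cover of $U$ of codimension $d-1$. Applying Corollary~\ref{cor:decomp123}(1) to $\mW|_{U'}$ with $U$ as the coatom and rearranging yields
\begin{equation*}
\theta^{f(U')-f(U)}\pp(\mW|_U;\theta) \;=\; \pp(\mW|_{U'};\theta) \;+\; \sum_{\substack{y\in\mA:\\ y\leq U',\,y\nleq U}} \pp(\mW([y,U']);\theta).
\end{equation*}
Since $\theta>0$ the left side is strictly positive, while every summand on the right is non-negative by the minors hypothesis, so at least one of them is strictly positive. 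This splits the analysis into two cases.

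If $\pp(\mW|_{U'};\theta)>0$, then by induction applied to $(\mW,U')$ we obtain a complement $V'$ of $U'$ in $\mL$ and $T'\leq V'$ with $\pp(\mW/T';\theta)>0$. Set $V:=V'\vee a$ and $T:=T'$. Note $a\nleq V'$, else $a\leq V'\wedge U'=\zero$, so by modularity $h(V)=h(V')+1=h(\one)-h(U)$; since $V\vee U=V'\vee U'=\one$, the modular identity $h(V\wedge U)+h(V\vee U)=h(V)+h(U)$ forces $V\wedge U=\zero$, and clearly $T\leq V'\leq V$. Otherwise some $\pp(\mW([y,U']);\theta)=\pp((\mW/y)|_{U'};\theta)$ is positive for an atom $y\leq U'$ with $y\nleq U$. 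Minors of $\mW/y$ are minors of $\mW$, and the codimension of $U'$ in the interval $[y,\one]$ is again $d-1$, so induction yields a complement $V''$ of $U'$ in $[y,\one]$ and $T''\in[y,V'']$ with $\pp(\mW/T'';\theta)>0$. Then $V''\wedge U\leq V''\wedge U'=y$ together with $y\nleq U$ force $V''\wedge U=\zero$; since $h(V'')=1+(h(\one)-h(U'))=h(\one)-h(U)$, the modular dimension identity gives $V''\vee U=\one$, so $V:=V''$ and $T:=T''$ do the job.

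The main obstacle is the \emph{promotion} step in each case: the complement of $U'$ produced by induction (either in $\mL$ or in $[y,\one]$) must be upgraded to a genuine complement of $U$ in $\mL$. Both upgrades rely essentially on $\mL$ being modular and relatively complemented, together with the fact that the extra atom ($a$ in the first case, $y$ in the second) lies outside $U$.
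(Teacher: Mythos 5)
Your proof is correct, and it takes a genuinely (if mildly) different route from the paper's, though both are inductions on the codimension of $U$ that hinge on the contraction-restriction identity of Theorem~\ref{th:chardecomp} (equivalently Corollary~\ref{cor:decomp123}(1)). The paper works from the top: it fixes a coatom $H\geq U$, applies the induction hypothesis to $\mW|_H$ (where $U$ has codimension $d-1$) to produce $T_1\leq V_1$ with $\pp(\mW([T_1,H]);\theta)>0$, and only then invokes the decomposition identity, applied to $\pp(\mW/T_1;\theta)$ at the coatom $H$, to conclude that either $\pp(\mW/T_1;\theta)>0$ or $\pp(\mW/(T_1\vee b);\theta)>0$ for some atom $b\nleq H$. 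You instead work from the bottom: you enlarge $U$ to a cover $U'=U\vee a$, apply the decomposition identity to $\pp(\mW|_{U'};\theta)$ at the coatom $U$ of $[\zero,U']$ \emph{before} any induction, and then apply the induction hypothesis in whichever branch is forced — to $(\mW,U')$ in one case and to the minor $(\mW/y,U')$ in the other, both of codimension $d-1$. Thus the paper applies induction once and case-splits afterward; you case-split first and apply induction separately in each branch (to a minor, in one of them), which requires the inductive hypothesis to be quantified over all weighted lattices satisfying the nonnegativity assumption, as you correctly note. The final promotion step — upgrading a complement of $U'$ (in $\mL$ or in $[y,\one]$) to a genuine complement of $U$ in $\mL$ — costs the same modular height arithmetic in both approaches; you spell it out more explicitly than the paper does. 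Net effect: same key lemma, same invariant, different order of operations; neither route is obviously shorter, but yours exposes the role of modularity a bit more clearly.
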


\begin{proof}
    We show the result by induction on $n-\h(U)$.
    Clearly, if $U=\one$ then the result holds with $T=V=\zero$.
    Let $H \in \mH$ such that $U \leq H$. 
    Since $\pp((\mW|_H)|_{U};\theta)=\pp(\mW|_{U};\theta) >0$,
    by the induction hypothesis, there exists a complement $V_1\leq H$ of $U$ in $[\zero,H]$ and an element $T_1 \leq V_1$ such that
    $\pp(\mW([T_1,H]);\theta) > 0$. By Theorem \ref{th:chardecomp}, we have
    \[
      \pp(\mW/T_1;\theta) = \theta^{f(E)-f(H)}\pp(\mW([T_1,H]);\theta) 
      - \sum_{\substack{B \in \mL: \\T_1 \lessdot B \nleq H}} \pp(\mW/B;\theta).
    \]
    If either the set $\{B \in \mL : T_1 \lessdot B , B \nleq H\}$ is empty, or if $\pp(\mW/B;\theta)=0$ for each $B$ such that $T_1 \lessdot B,\; B \nleq H$,
    then $\pp(\mW/T;\theta)>0$ for $T=T_1 \leq V_1$ and by the modularity of $\mL$, $V_1$ is contained in an element $V$ that is a complement of $U$ in $\mL$.
    Suppose then that there exists
    a cover $B$ of $T_1$ such that $B \nleq H$ and $\pp(\mW/B;\theta) > 0$.
    Since $\mL$ is modular, any complement of $H$ has height $1$ and any $b \in \mL$ such that $b \nleq H$
    is a complement of $H$. Let $b \leq B$ such that $b \nleq H$.
    Since $T_1 \lessdot B$, we have $B=b \vee T_1.$
    We have that $U \vee V_1 \vee b = H\vee b = \one$.
    Also, it can be checked by the modularity of $\mL$ that $\h((V_1 \vee b) \wedge U)=0$, and so
    $(V_1 \vee b) \wedge U = \zero $. 
    Therefore, $V=V_1 \vee b$ and $T=B=T_1 \vee b$ are the required elements.
\end{proof}


\section{A Critical Theorem for {\em q}-Polymatroids}\label{sec:crit_theorem}
	
	We now consider properties of a $q$-polymatroid arising from an $\F_q$-linear rank-metric code.
	There are several papers outlining properties of rank-metric codes.
	The $q$-polymatroids associated with these structures have been studied in 
	\cite{GLJ,gorla2019rank,shiromoto19}. 
    Throughout this section, we fix $E=\F_q^n$.
   	
	\begin{lemma}[\cite{byrneweighted,gorla2019rank}]\label{lem:repqpoly}
		Let $C$ be an $\mathbb{F}_{q}$-$[n\times m,k]$ rank-metric code.
		The following hold.
		\begin{enumerate}
			\item[(1)] $\M[C^{\perp}] = (\M[C])^*$.
  			\item[(2)] $\pp(\M[C]/U;q)=|C_{=U}|$. 
			\item[(3)] $W_i(C) = A_{\M[C]}(i;q)$ for each $i \in [n]$. 
		\end{enumerate} 
	\end{lemma}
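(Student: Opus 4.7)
The plan is to establish the three parts in the order (2), (3), (1): part (2) will fall out of M\"obius inversion, part (3) will then follow by reorganising the weight distribution by support, and part (1) is a standard trace-form duality calculation.

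For (2), I would unpack the definition using $\rho(E)=k$ and $\rho(V)=k-\dim C_V$, which gives
\[
\pp(\M[C]/U;q) \;=\; \sum_{V\geq U}\mu(U,V)\, q^{\dim C_V} \;=\; \sum_{V\geq U}\mu(U,V)\, |C_V|.
\]
The key observation is the ``support partition''
\[
|C_V| \;=\; \sum_{T\geq V} |C_{=T}|,
\]
which holds because $A\in C_V$ iff $\supp(A)\leq V^\perp$, iff $\supp(A)=T^\perp$ for a unique $T\geq V$, and the $C_{=T}$ are disjoint. M\"obius inversion on $\mL(E)$, as stated in the excerpt, then inverts this identity to $|C_{=U}|=\sum_{V\geq U}\mu(U,V)\,|C_V|$, which matches the previous display and proves (2).

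For (3), I would split $W_i(C)$ by support and substitute (2) into each summand:
\[
W_i(C) \;=\; \sum_{\substack{X\in \mL(E) \\ h(X)=i}} |\{A\in C:\supp(A)=X\}| \;=\; \sum_{\substack{X\in \mL(E) \\ h(X)=i}} |C_{=X^\perp}| \;=\; \sum_{\substack{X\in \mL(E) \\ h(X)=i}} \pp(\M[C]/X^\perp;q),
\]
and the last sum is exactly the definition of $A_{\M[C]}(i;q)$.

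For (1), I would use the lattice anti-isomorphism $\varphi:\mL(E)\to\mL(E)$, $A\mapsto A^\perp$, and aim to show $\rho_{\M[C^\perp]}(A^\perp)=m\,h(A^\perp)-k+\rho(A)$, i.e.\ the value of $\rho^*$ at $\varphi(A)$ with $r=m$ and $\rho(E)=k$. Introduce the matrix space $X_A:=\{M\in\F_q^{n\times m}:\col(M)\leq A\}$, of $\F_q$-dimension $m\,h(A)$; a one-line trace-form computation with $\Tr(MN^T)$, testing $M=ae_j^T$ for $a\in A$, gives $X_A^\perp=X_{A^\perp}$. Since $C_A=C\cap X_{A^\perp}$ and $(C^\perp)_{A^\perp}=C^\perp\cap X_A$, the identity $(C^\perp\cap X_A)^\perp=C+X_{A^\perp}$ combined with the standard dimension formula for a sum of subspaces yields
\[
\dim(C^\perp)_{A^\perp} \;=\; m\,h(A)-k+\dim C_A,
\]
which rearranges to the desired equality. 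I expect the only technical point to be the identification $X_A^\perp=X_{A^\perp}$; the rest is routine bookkeeping.
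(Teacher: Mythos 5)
Your proof is correct. The paper does not actually prove Lemma~\ref{lem:repqpoly} itself—it is cited from~\cite{byrne2021weighted,gorla2019rank}—so there is no in-paper proof to compare against directly. However, your argument for part~(2) via M\"obius inversion on $|C_V|=\sum_{T\geq V}|C_{=T}|$ is exactly the $t=1$ specialisation of the paper's own proof of Theorem~\ref{th:critextension}, and your derivation of part~(3) from part~(2) is the intended routine bookkeeping. For part~(1), the key identity $X_A^\perp = X_{A^\perp}$ under the trace form and the dimension count $\dim (C^\perp)_{A^\perp} = m\,h(A)-k+\dim C_A$ are both correct and constitute the standard duality argument used in the cited references; one should just be explicit that the step $(C^\perp\cap X_A)^\perp = C + X_{A^\perp}$ uses $(C^\perp)^\perp=C$, i.e.\ non-degeneracy of the trace form.
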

	
\begin{remark}
Note that when $C$ is an $\F_{q^m}$-linear vector code the results of Lemma \ref{lem:repqpoly} hold with~$q^m$ in place of $q$. Indeed, in the Definition~\ref{def:vector_codes}, the rank function of $\M[C]$ is the rank function of the associated $(q,m)$-polymatroid as defined in Definition \ref{def:codepoly}, divided by $m$. Since~$C$ is $\F_{q^m}$-linear, $C_U$ is an $\F_{q^m}$-vector space for each subspace $U$ and so has $\F_q$-dimension which is a multiple of $m$. In particular, we have $\pp(\M[C]/U;q^m)=|C_{=U}|$ for an $\F_{q^m}$-$[n,k]$ code $C$ and subspace $U$. 
	\end{remark}

    Note that Lemma \ref{lem:repqpoly} (2) is an instance of the {Critical Theorem~\cite{CrapoRota}} for representable $q$-polymatroids. 
    We now present a full extension of this result. 
 
 \begin{theorem}[The Critical Theorem]\label{th:critextension}
   Let $t$ be a positive integer, let $C$ be an $\mathbb{F}_{q}$-$[n\times m,k]$ rank-metric code, let $\M=\M[C]$ and let $U \in \mL(E)$.
   Then 
   \[
   |\{(X_1,\ldots,X_t) \in C^t : \sum_{i=1}^t \colsp(X_i) = U \}|  = \pp(\M.U;q^{t}).
   \]
\end{theorem}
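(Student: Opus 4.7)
The plan is to run the classical Möbius-inversion argument behind the Crapo--Rota Critical Theorem on the subspace lattice $\mL(E)$, using the fact that the cumulative count of tuples whose supports fit inside a prescribed subspace is read off directly from the definition of $\M[C]$.

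For each $W \leq E$ I would define $g(W) := |\{(X_1,\ldots,X_t) \in C^t : \sum_{i=1}^t \supp(X_i) = W\}|$ and the cumulative count $f(W) := \sum_{V \leq W} g(V)$. The condition $\sum_i \supp(X_i) \leq W$ is equivalent to $\supp(X_i) \leq W$ for every $i$, i.e.\ $X_i \in C_{W^\perp}$, so the tuples factor independently and
$f(W) = |C_{W^\perp}|^t = q^{t(k - \rho(W^\perp))} = q^{t(\rho(E) - \rho(W^\perp))}$,
using Definition~\ref{def:codepoly} together with $\rho(E) = k - \dim C_E = k$.

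Möbius inversion on $\mL(E)$ then yields
$g(U) = \sum_{W \leq U} \mu(W, U)\, q^{t(\rho(E) - \rho(W^\perp))}$.
Substituting $A := W^\perp$ reindexes the sum over $A \in [U^\perp, E]$. The Gaussian Möbius formula recalled in Section~\ref{sec:preliminaries} makes $\mu(W, U)$ depend only on the height gap $h(U) - h(W) = h(A) - h(U^\perp)$, so it coincides with $\mu(U^\perp, A)$. Substituting gives
$g(U) = \sum_{A \in [U^\perp, E]} \mu(U^\perp, A)\, q^{t(\rho(E) - \rho(A))} = \pp(\M/U^\perp; q^t) = \pp(\M.U; q^t)$,
where the middle equality is the definition of the characteristic polynomial applied to the minor $\M/U^\perp$, whose rank function on $[U^\perp,E]$ is $\rho(A) - \rho(U^\perp)$, so that $\rho(E) - \rho(A)$ is the correct exponent.

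The one place requiring a little care is the identity $\mu(W, U) = \mu(U^\perp, W^\perp)$. Although $W \mapsto W^\perp$ is a lattice anti-isomorphism of $\mL(E)$, it is not canonically tied to the lattice structure (orthogonal complements are not unique in $\mL(E)$), so this equality cannot be deduced formally and must instead be read off from the explicit Gaussian formula in the excerpt. Everything else is a direct Möbius inversion combined with a single counting identity; specializing to $t=1$ recovers Lemma~\ref{lem:repqpoly}(2), which is a useful consistency check.
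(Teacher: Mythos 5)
Your proof is correct and is the same M\"obius-inversion argument the paper uses; the only difference is cosmetic, since the paper indexes $f$ and $g$ by $W^\perp$ from the outset and thereby avoids your final reindexing step. One small correction to your aside: the identity $\mu(W,U)=\mu(U^\perp,W^\perp)$ does follow formally from the fact that $\perp$ is a fixed order-reversing bijection of $\mL(E)$ (an anti-automorphism carries the interval $[W,U]$ isomorphically onto the order-dual of $[U^\perp,W^\perp]$, and the M\"obius function of a poset equals that of its dual with the arguments swapped, by the equivalent recursion in Eq.~\eqref{eq:mob}), so no appeal to the explicit Gaussian formula is needed.
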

\begin{proof}
For each subspace $W\leq \Fq^n$, define
\begin{align*}
    f(W)&:=|\{(X_1,\ldots,X_t) \in C^t: \sum_{i=1}^t \colsp(X_i) = W^\perp \}|,\\
    g(W)&:=|\{(X_1,\ldots,X_t) \in C^t: \sum_{i=1}^t \colsp(X_i) \leq W^\perp \}|.
\end{align*}
We have that $\displaystyle g(W)=\sum_{V \in [W,E]}f(V)$ for all $W \in \mL(E)$, and
\begin{align*}
    g(W) = |\{(X_1,\ldots,X_t) \in C^t:  \colsp(X_i) \leq W^\perp\; \forall \; i \in[t]\}| = |C_W|^t.
\end{align*}
Let $\mu$ be the M\"obius function of $\mL(E)$. Applying M\"obius inversion, we get:
$$ f(W)=\sum_{V \in [W,E]}\mu(W,V)g(V) 
= \sum_{V \in [W,E]}\mu(W,V)|C_V|^t 
= \sum_{V \in [W,E]}\mu(W,V)q^{t(k-\rho(V))} =\pp(\M/W;q^{t}).$$
Now set $U=W^\perp$ to get that $f(U^\perp) =\pp(\M.U;q^{t})$.
\end{proof}

We remark that in~\cite{jany2022projectivization}, a proof of the Critical Theorem for the special case of representable $q$-matroids was given using the concept of the projectivization matroid.
	
The critical problem was originally formulated by Crapo and Rota as the problem of finding the minimum number of hyperplanes that distinguish a set of points in a projective space. We now consider a $q$-analogue of this interpretation. We remark that the problem admits different formulations for other combinatorial structures; see \cite{kung95} for a detailed overview.

For any bilinear form $b:\F_q^n \times \F_q^m\longrightarrow \F_q$, we let $\lker b$
denote the left kernel of $b$.

\begin{definition}
Let $U \leq \F_q^n$. Let $B=(b_1,\ldots,b_t)$ be a list of bilinear forms $b_i \in \B(\F_q^n,\F_q^m)$. 
We say that $B$ {\bf distinguishes} the space $U$ if
$\bigcap\limits_{i=1}^t \lker b_i\leq U^\perp$. 
\end{definition}

Now let $C$ be an $\Fq$-$[n \times m,k]$ rank-metric code and let $X \in C$. 
Clearly, $\lker b = \colsp(X)^\perp$, where $b\in\B(\Fq^n,\Fq^m)$ is the bilinear form represented by $X$. 
We say that a linear map $b$ on $\Fq^n$ is {\bf induced} by $C$ if $b$ arises in this way from a codeword $X$ of $C$. More generally, we say that a $t$-tuple $B = (b_1,\ldots,b_t)$ of bilinear forms is induced by $C$ if each $b_i$ is induced by~$C$. 
The following is a $q$-analogue of \cite[Chapter 16, Theorem 1]{CrapoRota}.

\begin{corollary}\label{cor:crit}
     Let $C$ be an $\Fq$-$[n \times m,k]$ rank-metric code and let 
     $\M=\M[C]$. Then the number of $t$-tuples of bilinear forms $(b_1,\ldots,b_t)$ induced by $C$ that distinguish $\supp(C)$ is $\pp(\M.\supp(C);q^t)$.
     In particular, if $C$ is non-degenerate then this number is $\pp(\M;q^t)$.
\end{corollary}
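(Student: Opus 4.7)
The plan is to reduce the statement directly to Theorem \ref{th:critextension} by translating the ``distinguishing'' condition into an equation on supports. Given a list $\B = (b_1,\ldots,b_t)$ of bilinear forms induced by $C$, choose codewords $X_1,\ldots,X_t \in C$ so that $b_i$ is represented by $X_i$. The key observation (noted in the paragraph preceding the corollary) is that $\lker b_i = \supp(X_i)^\perp$, and since the map $U \mapsto U^\perp$ is an anti-automorphism of $\mL(E)$ that interchanges meet with join, we have
\[
\bigcap_{i=1}^t \lker b_i \;=\; \bigcap_{i=1}^t \supp(X_i)^\perp \;=\; \Bigl(\sum_{i=1}^t \supp(X_i)\Bigr)^{\!\perp}.
\]

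Next I would use the fact that $\supp(X_i) \leq \supp(C)$ for every codeword $X_i \in C$, so that $\sum_{i=1}^t \supp(X_i) \leq \supp(C)$ automatically. Taking orthogonal complements, the distinguishing condition $\bigcap_i \lker b_i \leq \supp(C)^\perp$ is equivalent to the reverse inequality $\sum_i \supp(X_i) \geq \supp(C)$, and combined with the automatic containment, this is the same as the equality $\sum_{i=1}^t \supp(X_i) = \supp(C)$. Hence the $t$-tuples of induced bilinear forms distinguishing $\supp(C)$ are in bijection with the $t$-tuples $(X_1,\ldots,X_t) \in C^t$ satisfying $\sum_i \supp(X_i) = \supp(C)$.

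Applying Theorem \ref{th:critextension} with $U = \supp(C)$, this number equals $\pp(\M.\supp(C);q^t)$, which proves the main assertion. For the ``in particular'' claim, if $C$ is non-degenerate then $\supp(C) = \F_q^n = E$, so $\supp(C)^\perp = \zero$ and $\M.\supp(C) = \M/\zero = \M$, giving $\pp(\M;q^t)$.

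The argument is essentially a bookkeeping exercise once the dictionary between $\lker b_i$ and $\supp(X_i)^\perp$ is in place, and so no real obstacle is expected; the only subtlety is ensuring that the bijection between induced bilinear forms and codewords is well-defined (each $b_i$ determines $X_i$ uniquely given a choice of bases, so there is no overcounting), which is already built into the definition of ``induced by $C$'' given just before the corollary.
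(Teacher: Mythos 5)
Your proof is correct and follows essentially the same route as the paper: both reduce to Theorem \ref{th:critextension} by combining the identity $\lker b_i = \supp(X_i)^\perp$ with the automatic containment $\sum_i\supp(X_i)\leq\supp(C)$ to show the distinguishing condition is equivalent to $\sum_i\supp(X_i)=\supp(C)$. You spell out the ``if and only if'' a bit more carefully than the paper, which only writes the forward implication, but the argument is the same.
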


\begin{proof}
   By Theorem \ref{th:critextension}, we have that 
   \[
   |\{(X_1,\ldots,X_t) \in C^t: \sum_{i=1}^t \colsp(X_i) = \supp(C) \}|
   = \pp(\M.\supp(C);q^t).
   \]
   Let $(b_1,\ldots,b_t)$ be a $t$-tuple of bilinear forms induced by $C$ that distingishes $\supp(C)$ and for each $i$ let $X_i \in C$ be the matrix representation of $b_i$.
   Clearly, $\sum_{j=1}^t \colsp(X_j) \leq \supp(C)$ and hence 
   \begin{align*}
   \bigcap_{i=1}^t \lker b_i\leq \supp(C)^\perp \implies \sum_{i=1}^t \colsp(X_i) = \supp(C). 
   \end{align*} 
\end{proof}

\begin{definition}
Let $C$ be a non-degenerate $\Fq$-$[n \times m,k]$ rank-metric code and let $\M=\M[C]$.
We denote by $\crit(\M)$ the least positive integer $t$ such that there exists a $t$-tuple of bilinear forms ${B}$ induced by $C$ that distinguishes $E$. 
We call this the {\bf critical exponent} of~$\M$.
\end{definition}

It follows that if $\M$ is a representable $(q,m)$-polymatroid, satisfying $\M=\M[C]$ for some $\Fq$-$[n \times m,k]$ rank-metric code $C$, then $\crit(\M)=\min\{t: \pp(\M;q^{t})>0\}$, if such a $t$ exists in~$[k]$.
This is the minimum dimension
$t$ of a subspace $D$ of $C$ such that $\supp(D) = E$. 
Note that $C$ is degenerate if and only if there exists $x\in \mL(E)$ such that $\supp(C) \leq x^\perp$, which holds if and only if $C_x = C$, i.e. if $x$ is a loop of $\M$. Therefore, for a representable $q$-polymatroid $\M$:
\[
   \crit(\M) = \left\{
              \begin{array}{cl}
                \infty                              & \text{ if } \M \text{ has a loop, }\\
                 \min\{t: \pp(\M;q^{t})>0\}  & \text{ otherwise. }
              \end{array}
              \right.
\]

\begin{example}
Let $C$ be the $\F_2$-$[5\times 3,6,1]$ rank-metric code generated by the matrices
$$ \begin{pmatrix}
1 & 0 & 0 \\
0 & 0 & 0 \\
0 & 0 & 0 \\
0 & 0 & 0 \\
0 & 0 & 0
\end{pmatrix}, \quad  \begin{pmatrix}
0 & 0 & 0 \\
0 & 1 & 0 \\
0 & 0 & 0 \\
0 & 0 & 0 \\
0 & 0 & 0
\end{pmatrix}, \quad  \begin{pmatrix}
0 & 0 & 0 \\
0 & 0 & 0 \\
0 & 0 & 1 \\
0 & 0 & 1 \\
0 & 0 & 0
\end{pmatrix},\quad  \begin{pmatrix}
0 & 1 & 0 \\
0 & 1 & 1 \\
0 & 0 & 0 \\
0 & 0 & 1 \\
1 & 0 & 1
\end{pmatrix}, \quad  \begin{pmatrix}
0 & 1 & 1 \\
1 & 0 & 1 \\
1 & 0 & 1 \\
1 & 1 & 1 \\
0 & 1 & 0
\end{pmatrix}, \quad  \begin{pmatrix}
0 & 1 & 1 \\
1 & 1 & 1 \\
1 & 1 & 1 \\
0 & 1 & 0 \\
1 & 0 & 0
\end{pmatrix}.$$
Let $\M=(\F_2^5, \rho)$ be the $(2,3)$-polymatroid $\M[C]$. We calculate its characteristic polynomial: 
\begin{align*}
    \pp(\M;z) &:= \sum_{X\leq \F_2^5}\mu(0,X)z^{6-\rho(X)} 
    =\sum_{j=1}^5(-1)^j2^{\binom{j}{2}}\sum_{\substack{{X\leq \F_2^5}:\\\dim(X)=j}}z^{6-\rho(X)}.
\end{align*} 
With the aid of the algebra package \textsc{magma} \cite{magma}, we obtain:
$$\pp(\M;z)= z^6-4z^4-25z^3+44z^2+40z-56.$$
We have that $\pp(\M;1)=\pp(\M;2)=0$. But $\pp(\M;2^2)= 2280>0$, hence $\crit(\M)=2$. 
Indeed, consider
$$X_1= \begin{pmatrix}
0 & 1 & 0 \\
0 & 1 & 1 \\
0 & 0 & 0 \\
0 & 0 & 1 \\
1 & 0 & 1
\end{pmatrix}  \quad  \textnormal{ and }   \quad  X_2=\begin{pmatrix}
0 & 1 & 1 \\
1 & 1 & 1 \\
1 & 1 & 1 \\
0 & 1 & 0 \\
1 & 0 & 0
\end{pmatrix}.$$
Then $\colsp(X_1)=\langle e_5, e_1+e_2, e_2+e_4+e_5 \rangle$, $\colsp(X_2)=\langle e_4, e_1+e_2+e_3, e_2+e_3+e_5 \rangle$ and it is easy to see that $\colsp(X_1)+\colsp(X_2)=\F_2^5$. 
\end{example}

\begin{remark}
    A version of the Critical Theorem for $q$-polymatroids has been independently shown by Imamura and Shiromoto in \cite{imamura2023}. However, they require that the chosen subspace $U\in \mL(E)$ satisfies $U\cap U^\perp =\zero$.
    This difference is due to a different definition of $\M.U$, which in their case has the following characteristic polynomial: 
    $$\mathbb{P}(\M.U;z)=\sum_{X\in [\zero, U]} \mu(\zero ,X) z^{\rho_{[\zero, U]}(U)-\rho_{[\zero, U]}(X)}.$$
    We also point out that in case $U=E$ the two versions coincide. In particular, this means that if $C$ is a non-degenerate code then $\crit(\M[C])$ can be computed using either definition.
\end{remark}


The following result is a $q$-analogue of \cite[Proposition 6.4.4]{bryl_oxley}. 
\begin{proposition}\label{prop:critineq}
    Let $C$ be an $\Fq$-$[n \times m,k,d]$ non-degenerate rank-metric code and let $\M=\M[C]$.
    Let $T_1, T_2 \in \mL$ such that $T_1 \oplus T_2 = E$. Then
    \[
       \crit(\M|_{T_1}) \leq \crit(\M) \leq \crit(\M|_{T_1}) + \crit(\M|_{T_2}).
    \]
\end{proposition}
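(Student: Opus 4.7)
The proof will be in two parts, one for each inequality.

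For the lower bound $\crit(\M|_T)\le\crit(\M)$, I will proceed by induction on $\codim_E T$, showing that $\pp(\M;q^t)>0$ forces $\pp(\M|_T;q^t)>0$. The key step: pick a coatom $H$ of $E$ with $T\le H$ and rearrange Corollary~\ref{cor:decomp123}(1) as
\[
q^{t(\rho(E)-\rho(H))}\,\pp(\M|_H;q^t)=\pp(\M;q^t)+\sum_{y\in\mA(E),\,y\nleq H}\pp(\M/y;q^t).
\]
Each term $\pp(\M/y;q^t)=\pp(\M.y^\perp;q^t)$ is a non-negative integer by Theorem~\ref{th:critextension} (it counts tuples of codewords of $C$ whose supports sum to $y^\perp$), so $\pp(\M;q^t)>0$ forces $\pp(\M|_H;q^t)>0$. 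Iterating along a chain of coatoms $E=H_0>H_1>\dots>H_c=T$, and noting that each sub-minor of $\M[C]$ remains representable (via combined shortening and puncturing of $C$) so the same non-negativity persists at every step, one obtains $\pp(\M|_T;q^{\crit(\M)})>0$ and hence $\crit(\M|_T)\le\crit(\M)$.

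For the upper bound, I will first extract the coding meaning of $\crit(\M|_T)$. Using $\rho(T)-\rho(A)=\dim_{\F_q}(C_A/C_T)$ for $A\le T$, one has
\[
\pp(\M|_T;q^t)=\sum_{A\le T}\mu(\zero,A)\,|C_A/C_T|^t,
\]
and M\"obius inversion inside $[\zero,T]$ identifies $|C_T|^t\,\pp(\M|_T;q^t)$ with the number of $t$-tuples $(X_1,\dots,X_t)\in C^t$ satisfying $\sum_i\supp(X_i)+T^\perp=E$ (a property well defined modulo $C_T^t$). Hence $\crit(\M|_T)$ is the minimum dimension of a subcode $D\le C$ with $\supp(D)+T^\perp=E$, and the symmetric statement holds for $T'$; note $T\oplus T'=E$ implies $T^\perp\oplus T'^\perp=E$.

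Given subcodes $D_T$ and $D_{T'}$ of $C$ of minimal dimensions $t_1$ and $t_2$ realising the two conditions, the candidate $D=D_T+D_{T'}$ has dimension at most $t_1+t_2$ and satisfies $\supp(D)+T^\perp=\supp(D)+T'^\perp=E$, but these alone do not force $\supp(D)=E$; coordinating the choices of $D_T,D_{T'}$ so that the combined subcode has full support is the main obstacle. I would address it via the factorisation
\[
\pp(\M;q^{t_1+t_2})=\sum_{V_1+V_2=E}\pp(\M.V_1;q^{t_1})\,\pp(\M.V_2;q^{t_2}),
\]
obtained by splitting a $(t_1{+}t_2)$-tuple in $C^{t_1+t_2}$ as $((X_i),(Y_j))$ and applying Theorem~\ref{th:critextension}, combined with $|C_T|^{t_1}\pp(\M|_T;q^{t_1})=\sum_{V+T^\perp=E}\pp(\M.V;q^{t_1})$ and its $T'$-analogue. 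The hypotheses then supply subspaces $V^{(1)}$ with $V^{(1)}+T^\perp=E$ and $\pp(\M.V^{(1)};q^{t_1})>0$, and $V^{(2)}$ with the dual properties; the crucial remaining step is to extract a pair $(V_1,V_2)$ with $V_1+V_2=E$ contributing a positive summand on the right. For this I would exploit the dimension constraints $\dim V^{(1)}\ge\dim T$ and $\dim V^{(2)}\ge\dim T'$ together with the complementarity $T^\perp\oplus T'^\perp=E$, which leave enough room in $\mL(E)$ to produce a compatible pair and thus $\pp(\M;q^{t_1+t_2})>0$.
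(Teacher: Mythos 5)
Your argument for $\crit(\M|_T)\le\crit(\M)$ is correct; it is precisely the alternative route the paper flags in the remark following the proposition (deduce it from Corollary~\ref{cor:charrest}), whereas the paper's primary proof is the one-line observation that a subcode $D$ with $\supp(D)=E$ trivially satisfies $\supp(D)+T^\perp=E$.

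For the upper bound you have located a genuine gap, and it is not a failure of your approach but of the statement itself. Your M\"obius-inversion computation is exactly right: $\crit(\M|_T)$ is the least $t$ for which some $t$-dimensional $D\le C$ satisfies $\supp(D)+T^\perp=E$. The paper's proof instead asserts that $\crit(\M|_T)=\ell$ yields a subcode $D$ with $\supp(D)=T$ and concludes $\supp(D+D')\supseteq T+T'=E$; but the optimal subcode need not have support contained in $T$ at all (this is where the $q$-analogue diverges from the Boolean case, in which $T^\perp$ is the set-complement of $T$ and $\supp(D)+T^\perp=E$ forces $\supp(D)\supseteq T$). Your proposed repair via splitting a $(t_1+t_2)$-tuple and searching for a compatible pair $(V_1,V_2)$ is only heuristic and cannot be made to work, because the inequality $\crit(\M)\le\crit(\M|_T)+\crit(\M|_{T'})$ is in fact false. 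For a concrete counterexample take $q=2$, $E=\F_2^4$, $T=\langle e_1,e_2\rangle$, $T'=\langle e_3,e_4\rangle$, and $C\le\F_2^{4\times 2}$ spanned by
\[
\begin{pmatrix}1&0\\0&1\\1&0\\0&1\end{pmatrix},\qquad
\begin{pmatrix}1&0\\0&0\\0&0\\0&0\end{pmatrix},\qquad
\begin{pmatrix}0&0\\1&0\\0&0\\0&0\end{pmatrix}.
\]
This $C$ is non-degenerate, and a short computation gives $\pp(\M|_T;2)=2>0$ and $\pp(\M|_{T'};2)=1>0$, so $\crit(\M|_T)=\crit(\M|_{T'})=1$; on the other hand every rank-$2$ codeword has $e_2+e_4$ in its column space, so no two codewords have supports spanning $\F_2^4$, giving $\crit(\M)=3>1+1$. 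So the step you could not close is exactly where the published proof breaks; the right-hand inequality needs a corrected hypothesis (for instance using the contractions $\M.T$, $\M.T'$ in place of the restrictions, with the convention that the bound is vacuous when either critical exponent is infinite).
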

\begin{proof}
     Let $\crit(\M) = s$ for some positive integer $s$.
     Then there exists an $s$-dimensional subspace $D \leq C$ whose support is $E$. Clearly, $\supp(D) = E \geq T_1$ and so
     $\crit(\M|_{T_1})\leq s$. 
    Suppose first that $T_1 = E_1=\langle e_1,\dots,e_t \rangle_{\Fq}$ and that $T_2=E_2 = \langle e_{t+1},\dots,e_n\rangle_{\Fq}$.
    Let $\crit(\M|_{E_1})=s_1$ and $\crit(\M|_{E_2})=s_2$ for positive integers $s_1,s_2$, respectively. Then by Lemma \ref{lem:pun}, $\crit(\M[C|_{E_i}])=s_i$ for $i \in \{1,2\}$ and so
    there exist subcodes $D_1,D_2$ of $C$ of dimensions $s_1,s_2$, respectively, such that
    $\supp(D_i|_{E_i}) = E_i$. 
    Therefore, 
    \[
    \supp(D_1) \geq \colsp\left(\left[ \begin{array}{c}
         I_{t} \\
         B_1 
    \end{array} \right]\right)
    \text{ and }
    \supp(D_2) \geq \colsp\left(\left[ \begin{array}{c}
         B_2 \\
         I_{n-t} 
    \end{array} \right]\right),
    \]
    for some $B_1 \in \Fq^{(n-t) \times t}$ and $B_2 \in \Fq^{t \times (n-t)}$.
    Then $\dim(D_1+D_2)\leq s_1+s_2$ and
     \[
      \supp(D_1+D_2)= \supp(D_1) + \supp(D_2) \geq \colsp\left(\left[ \begin{array}{c}
         I_{t} \\
         B_1 
    \end{array} \right]\right)+
    \colsp\left(\left[ \begin{array}{c}
         B_2 \\
         I_{n-t} 
    \end{array} \right]\right)
    =E.
     \]
     Therefore, $\crit(\M) \leq s_1 +s_2$. 
     Now suppose that $T_1$ and $T_2$ are arbitrary subspaces of $E$ of dimensions $t$ and $n-t$, respectively, such that $E=T_1\oplus T_2$.
     Let $A \in \GL(n,\Fq)$ be such that $T_i = \{ xA : x \in E_i\}$ for $i \in \{1,2\}$.
     Then $C|_{T_i} = (A C)|_{E_i}$ and so 
     $\crit(\M) = \crit(\M[A C]) \leq  \crit(\M[A C]|_{E_1}) + \crit(\M[A C]|_{E_2}) = \crit(\M|_{T_1}) + \crit(\M|_{T_2}).$
\end{proof}


    We remark that the left-hand inequality of Proposition~\ref{prop:critineq} is also a consequence of Corollary~\ref{cor:charrest}.
    We have the following statements, which may be deduced from Corollaries \ref{cor:charcontrest} and \ref{th:charpuncsho}. Corollary \ref{cor:extasano} extends \cite{asano}; see also \cite[Section 6.4]{bryl_oxley}.
    
    \begin{corollary}\label{cor:critminors}
       Let $C$ be an $\Fq$-$[n \times m,k]$ non-degenerate rank-metric code, let $\M=\M[C]$
       and let $U \in \mL(E)$. The following hold.
       \begin{enumerate}
           \item There exists $W \in \mL(E)$ that is contained in a complement of $U$, such that 
                 $\crit(\M/W) \leq \crit(\M|_U)$.
           \item For any complement $T\in \mL(E)$ of $U$ we have that $\crit(\M|_T) \leq \crit(\M/U)$.
       \end{enumerate}
    \end{corollary}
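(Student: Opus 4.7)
The plan is to deduce both parts by translating Corollaries \ref{cor:charcontrest} and \ref{th:charpuncsho} into the language of critical exponents. Since $\M=\M[C]$ is loopless (by non-degeneracy of $C$, using Proposition \ref{prop:matrix_nondeg}), one has $\crit(\M)=\min\{t:\pp(\M;q^t)>0\}$, and the analogous formula applies to each loopless minor of $\M$.

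For part (2), set $s:=\crit(\M/U)$, so that $\pp(\M/U;q^s)>0$. Corollary \ref{cor:charcontrest} applied with $\theta=q^s$ and with the element $U$ playing the role of $T$ then gives $\pp(\M|_T;q^s)>0$ for every complement $T$ of $U$, whence $\crit(\M|_T)\leq s$. For part (1), set $t:=\crit(\M|_U)$, so that $\pp(\M|_U;q^t)>0$. Corollary \ref{th:charpuncsho} applied with $\theta=q^t$ then produces a complement $V$ of $U$ in $\mL(E)$ and an element $W\leq V$ with $\pp(\M/W;q^t)>0$, hence $\crit(\M/W)\leq t$.

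The main obstacle is verifying the hypothesis common to Corollaries \ref{cor:charcontrest} and \ref{th:charpuncsho}, namely that $\pp(\mW';\theta)\geq 0$ for every minor $\mW'$ of $\M$ at the relevant value $\theta\in\{q^s,q^t\}$. For any contraction $\M/X$ this is immediate from the Critical Theorem \ref{th:critextension}, because $\pp(\M/X;q^t)=\pp(\M.X^\perp;q^t)$ is a literal cardinality. For an arbitrary interval minor $\M([X,Y])$ one writes it as $(\M|_Y)/X$ and reduces to showing nonnegativity of $\pp(\M|_Y;\theta)$; this in turn follows by exhibiting a rank-metric code that represents $\M|_Y$ (a shortened-code construction from $C$) and then invoking the Critical Theorem for that code. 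This representability of restrictions is the only substantive ingredient beyond the two cited corollaries, and it is the real technical point of the argument; everything else is a direct transcription.
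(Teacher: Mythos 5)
Your argument is the same as the paper's: Corollary \ref{th:charpuncsho} at $\theta=q^{\crit(\M|_U)}$ gives part (1) and Corollary \ref{cor:charcontrest} at $\theta=q^{\crit(\M/U)}$ gives part (2). You are right to scrutinise the hypothesis that $\pp(\mW';q^t)\geq 0$ for every minor $\mW'$ of $\M$: the paper disposes of it in one sentence by asserting that $\pp(\M([X,Y]);q^t)$ is a cardinality, yet Theorem \ref{th:critextension} as stated only covers the contractions $\M/W=\M.W^\perp$. Your proposed fix via representing $\M|_Y$ (hence $\M([X,Y])=(\M|_Y)/X$) by a shortened or punctured code and re-invoking the Critical Theorem is viable, but one can avoid representability of restrictions altogether by running M\"obius inversion directly on $[X,Y]$: for $A\in[X,Y]$ one has $q^{t(\rho(Y)-\rho(A))}=|C_A/C_Y|^t$, and inverting $g(W)=|C_W|^t$ over $[X,Y]$ yields $|C_Y|^t\,\pp(\M([X,Y]);q^t)=|\{(M_1,\dots,M_t)\in C_X^{\,t}: \sum_i\supp(M_i)+Y^\perp=X^\perp\}|$; the set on the right is a disjoint union of free $(C_Y)^t$-translation orbits (adding $N_i\in C_Y$ to $M_i$ does not change $\supp(M_i)+Y^\perp$), so the quotient is a nonnegative integer. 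One small slip in your write-up: looplessness of $\M[C]$ for non-degenerate $C$ comes from the paper's remark preceding the formula $\crit(\M)=\min\{t:\pp(\M;q^t)>0\}$ (that $C$ is degenerate iff $\M[C]$ has a loop), not from Proposition \ref{prop:matrix_nondeg}, which is about $\F_{q^m}$-linear vector codes.
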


    \begin{proof}
        Since $\pp(\M([X,Y]);q^t)$ is the cardinality of a set for every interval $[X,Y] \subseteq \mL(E)$, it is non-negative for every positive integer $t$.
        Suppose that $\crit(\M|_U)=t$ for some positive integer $t$.
        From Corollary \ref{th:charpuncsho}, there exists $W$ contained in a complement of $U$ such that $\pp(\M/W;q^t)>0$ and so $\crit(\M/W)\leq t = \crit(\M|_U)$. This proves (1).
        We show that (2) holds.
        Let $T$ be a complement of $U$ and suppose that $\crit(\M/U)=t$ for some positive integer~$t$. Then $\pp(\M/U;q^t) >0$ and so from Corollary \ref{cor:charcontrest}, we have that $\pp(\M|_T;q^t) >0$. Therefore,
        $\crit(\M|_T) \leq t = \crit(\M/U)$.
    \end{proof}
\begin{corollary}\label{cor:extasano}
 Let $C$ be an $\Fq$-$[n \times m,k]$ non-degenerate rank-metric code, let $\M=\M[C]$
and let $U \in \mL(E)$.
   Let $\ell$ be a positive integer. 
   The following are equivalent.
   \begin{enumerate}
   \item $U$ is minimal in the set $\mS_1:=\{ S \in \mL(E) : \crit(\M|_{S'}) \leq \ell\: \forall \:S' \:\text{s.t.} \;S\oplus S' = E\}$.
   \item $U$ is minimal in the set $\mS_2:=\{ S \in \mL(E) : \crit(\M/S) \leq \ell\}$.
   \end{enumerate}
   In particular, $\mS_2 \subseteq \mS_1$ and the minimal elements of $\mS_1$ coincide with the minimal elements of~$\mS_2$.
\end{corollary}

\begin{proof}
   Clearly, $\mS_2 \subseteq \mS_1$ by Corollary \ref{cor:critminors} (2). 
   Let $U$ be minimal in $\mS_1$. 
   By Corollary \ref{cor:critminors} (1), there exists 
   $S \leq U$ such that $\crit(\M/S) \leq \ell$. Again, by Corollary \ref{cor:critminors} (2), we have $ \crit(\M|_{S'}) \leq \ell$ for every complement
   $S'$ of $S$, i.e. $S \in \mS_1$. By the minimality of $U$, we have $U=S$. Therefore, (1) implies (2).
   
   Let $U$ be minimal in $\mS_2 \subseteq \mS_1$.  
   Let $T \leq U$ be minimal in $\mS_1$.
   By the preceding argument, $T$ is minimal in $\mS_2$, from which it follows that $U=T$. This shows that (2) implies (1).
\end{proof}


We close this section with some observations on the case for which $\M$ is a  $q$-matroid: the specialisation of Corollary \ref{cor:crit}, to the case of $\fqm$-linear codes gives the Critical Theorem for representable $q$-matroids; see also \cite[Theorem 6.20]{jany2022projectivization}. 

	\begin{corollary}
	   Let $C$ be an $\mathbb{F}_{q^m}$-$[n,k,d]$ rank-metric code and let $t$ be a positive integer. Let $U\in \mL(E)$. 
	   Then the number of $t$-tuples $(x_1,\ldots,x_t) \in C^t$ such that
	   $\supp(\langle x_1,\ldots,x_t\rangle) = U$ is $\pp(\M.U;q^{mt})$.
	\end{corollary}

There is a geometric description of the Critical Theorem for representable $q$-matroids that aligns more closely with the matroid case. 
We start by recalling the geometric structure of vector rank-metric codes and their connection with the theory of $q$-systems; see \cite{alfarano2021linear,randrianarisoa2020geometric} for a more detailed treatment.

\begin{definition}
An $[n,k,d]_{q^m/q}$ \textbf{system} $\mU$ is an $\F_q$-subspace of $\F_{q^m}^k$ of dimension $n$, such that
$ \langle \mU \rangle_{\F_{q^m}}=\F_{q^m}^k$ and
$$ d=n-\max\left\{\dim_{\F_q}(\mU\cap H) : H \textnormal{ is an $\F_{q^m}$-hyperplane of }\F_{q^m}^k\right\}.$$
\end{definition}

\begin{theorem}\cite{alfarano2021linear, randrianarisoa2020geometric, sheekey2019scatterd} \label{th:connection}
Let $C$ be a non-degenerate $\F_{q^m}$-$[n,k,d]$ rank-metric code and let $G$ be a generator matrix for $C$.
Let $\mU \leq \F_{q^m}^k$ be the $\F_q$-span of the columns of $G$.
The rank weight of an element $x G \in C$, with $x \in \F_{q^m}^k$ is
$\rk (x G) = n - \dim_{\Fq}(\U \cap \langle x\rangle ^{\perp})$.
\end{theorem}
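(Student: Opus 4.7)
The plan is to reduce the statement to a rank--nullity computation for a single $\F_q$-linear evaluation map on $\U$. First, I would fix the basic set-up: write $G=[g_1\mid\cdots\mid g_n]$ with $g_i\in\F_{q^m}^k$, so that $\U=\langle g_1,\ldots,g_n\rangle_{\F_q}$ has $\F_q$-dimension $n$ by Proposition~\ref{prop:matrix_nondeg} (this is where non-degeneracy of $C$ is used). For $x\in\F_{q^m}^k$, the codeword is $xG=(x\cdot g_1,\ldots,x\cdot g_n)\in\F_{q^m}^n$, where $\cdot$ denotes the standard dot product on $\F_{q^m}^k$.

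Next, I would invoke the standard (basis-independent) reinterpretation of the rank weight already recorded after the definition of $\rk(x)$: for any $y=(y_1,\ldots,y_n)\in\F_{q^m}^n$, the rank $\rk(y)=\rk(\Gamma(y))$ equals $\dim_{\F_q}\langle y_1,\ldots,y_n\rangle_{\F_q}$, since the rows of $\Gamma(y)$ are the coordinate vectors of the $y_i$ in the basis $\Gamma$ and row rank is preserved under a change of $\F_q$-basis on $\F_{q^m}$. Applied to $y=xG$, this gives
\[
\rk(xG)=\dim_{\F_q}\langle x\cdot g_1,\ldots,x\cdot g_n\rangle_{\F_q}.
\]

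The key step is then to introduce the $\F_q$-linear map
\[
\phi_x:\U\longrightarrow \F_{q^m},\qquad u\longmapsto x\cdot u.
\]
Since $\U$ is the $\F_q$-span of $g_1,\ldots,g_n$, the image of $\phi_x$ is exactly $\langle x\cdot g_1,\ldots,x\cdot g_n\rangle_{\F_q}$, while by definition of the dot product the kernel is $\ker\phi_x=\{u\in\U:x\cdot u=0\}=\U\cap\langle x\rangle^\perp$. Applying the $\F_q$-rank--nullity theorem to $\phi_x$ yields
\[
n=\dim_{\F_q}\U=\dim_{\F_q}(\U\cap\langle x\rangle^\perp)+\dim_{\F_q}\mathrm{im}(\phi_x)=\dim_{\F_q}(\U\cap\langle x\rangle^\perp)+\rk(xG),
\]
which rearranges to the claimed identity.

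There is no real obstacle in this argument; it is essentially a one-line rank--nullity observation. The only point that deserves care, and which I would state explicitly, is the identification of $\rk(xG)$ with the $\F_q$-dimension of the span of its entries (independence of $\Gamma$), as this is what bridges the ``coordinate'' definition of the rank metric with the ``geometric'' description in terms of the $q$-system $\U$. Everything else — linearity of $\phi_x$, the description of its image and kernel — is immediate from the definitions.
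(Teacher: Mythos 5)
The paper does not supply its own proof of Theorem~\ref{th:connection}; it is cited from the references \cite{alfarano2021linear, randrianarisoa2020geometric, sheekey2019scatterd} without argument. Your proof is correct and is essentially the standard argument found in those sources: reinterpret $\rk(xG)$ as the $\F_q$-dimension of the $\F_q$-span of the entries of $xG$ (valid because the coordinate map $\F_{q^m}\to\F_q^m$ given by $\Gamma$ is an $\F_q$-linear isomorphism, so the row rank of $\Gamma(xG)$ equals that dimension), then apply $\F_q$-rank--nullity to the evaluation map $\phi_x\colon \U\to\F_{q^m}$, $u\mapsto x\cdot u$, whose image is precisely that span and whose kernel is $\U\cap\langle x\rangle^\perp$, with $\dim_{\F_q}\U=n$ supplied by Proposition~\ref{prop:matrix_nondeg} via non-degeneracy. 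Every step is justified, and the identity also holds degenerately at $x=0$. No gaps.
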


The $\Fq$-span of the columns of a generator matrix $G$ of a non-degenerate $\F_{q^m}$-$[n,k]$ rank-metric code $C$ is called a $q$-\textbf{system associated to $C$}. There is a bijection between the equivalence classes of non-degenerate $\F_{q^m}$-$[n,k,d]$ rank-metric codes and equivalence classes of $[n,k,d]_{q^m/q}$ systems; the interested reader is referred to \cite{alfarano2021linear, randrianarisoa2020geometric}.

The following result follows easily from Theorem \ref{th:critextension} and Corollary \ref{cor:crit}.

\begin{proposition}\label{prop:geom_crit}
   Let $C$ be a non-degenerate $\F_{q^m}$-$[n,k]$ rank-metric code, let $\mU$ be a $[n,k]_{q^m/q}$ system associated with $C$ and let $\M=\M[C]$. Then
   $$ \crit(\M) = \min\{r\in \N : \exists \;  \mbox{ $\F_{q^m}$-hyperplanes $H_1,\ldots, H_r$ s.t. } 
        \mU \cap H_1\cap\ldots\cap H_r = 0\}.$$
\end{proposition}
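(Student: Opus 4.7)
The plan is to translate the condition ``$D \leq C$ has $\supp(D) = \F_q^n$'' for a subcode $D$ into the geometric condition ``$\mU \cap D'^\perp = 0$'' for the corresponding $\F_{q^m}$-subspace $D' \leq \F_{q^m}^k$, and then equate the two minima by writing $D'^\perp$ as an intersection of hyperplanes and conversely. To start, by Theorem~\ref{th:critextension} (applied with $U = E$) together with Corollary~\ref{cor:crit}, $\crit(\M)$ equals the minimum $t \in \N$ for which there exists an $\F_{q^m}$-subcode $D \leq C$ of $\F_{q^m}$-dimension $t$ with $\supp(D) = \F_q^n$.

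Fix a generator matrix $G \in \F_{q^m}^{k \times n}$ of $C$ with columns $u_1, \ldots, u_n$, so that $\mU = \langle u_1, \ldots, u_n \rangle_{\F_q}$. Every $\F_{q^m}$-subcode $D \leq C$ of dimension $t$ is uniquely of the form $D = D'G$ for some $D' \leq \F_{q^m}^k$ of dimension $t$; choosing a basis $x_1, \ldots, x_t$ of $D'$ and stacking them as rows into a matrix $X \in \F_{q^m}^{t \times k}$, the product $XG$ is a generator matrix of $D$ whose columns are $Xu_1, \ldots, Xu_n$. Hence the $\F_q$-span of the columns of $XG$ coincides with the image $X\mU \subseteq \F_{q^m}^t$.

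By Proposition~\ref{prop:matrix_nondeg} applied to $D$ viewed as a length-$n$ code, $\supp(D) = \F_q^n$ if and only if $\dim_{\F_q}(X\mU) = n$. Since $\dim_{\F_q}(\mU) = n$ by non-degeneracy, this is equivalent to $X|_{\mU}$ being injective, i.e., to $\ker(X) \cap \mU = 0$. But $\ker(X) = \{v \in \F_{q^m}^k : x_i \cdot v = 0 \text{ for all } i\} = D'^\perp$, and setting $H_i := \langle x_i \rangle^\perp$ we have $D'^\perp = H_1 \cap \cdots \cap H_t$. Thus the existence of a subcode of dimension $t$ with full support is equivalent to the existence of $t$ $\F_{q^m}$-hyperplanes $H_1, \ldots, H_t$ satisfying $\mU \cap H_1 \cap \cdots \cap H_t = 0$.

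Finally, denote the right-hand side of the proposition by $r^*$. If $\crit(\M) = t$, the previous paragraph produces $t$ hyperplanes witnessing $r^* \leq t$. Conversely, given hyperplanes $H_i = \langle v_i \rangle^\perp$ for $i = 1, \ldots, r$ with $\mU \cap \bigcap_{i=1}^{r} H_i = 0$, set $D' := \langle v_1, \ldots, v_r \rangle$; then $\dim D' \leq r$ and $D := D'G$ has $\supp(D) = \F_q^n$, so $\crit(\M) \leq \dim D \leq r$, giving $\crit(\M) \leq r^*$. The main obstacle is the careful identification (via Proposition~\ref{prop:matrix_nondeg}) of support-fullness of $D$ with $\mU \cap D'^\perp = 0$; once this bridge is built, the rest of the argument is bookkeeping about how $\F_{q^m}$-subspaces of $\F_{q^m}^k$ are cut out by hyperplanes.
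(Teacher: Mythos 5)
Your proof is correct and is essentially the argument the paper has in mind (the paper only says the result ``easily follows from Theorem \ref{th:critextension} and Corollary \ref{cor:crit}''). The key bridge you supply---that for a subcode $D=D'G$ with $D'=\langle x_1,\ldots,x_t\rangle_{\F_{q^m}}$ one has $\supp(D)=\F_q^n$ iff $\dim_{\F_q}(X\U)=n$ iff $\ker(X)\cap\U=0$, via Proposition~\ref{prop:matrix_nondeg}---is precisely the $t$-codeword generalisation of Theorem~\ref{th:connection} (which the paper places immediately before the proposition and which is the $t=1$ case, $\rk(xG)=n-\dim_{\F_q}(\U\cap\langle x\rangle^\perp)$), so your route matches the intended one. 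Two small remarks: for a vector rank-metric code the cleaner direct citation for ``$\crit(\M)$ is the minimum $\F_{q^m}$-dimension of a subcode with full support'' is the $q$-matroid Corollary of Theorem~\ref{th:critextension} (the corollary for $\F_{q^m}$-linear codes), since Theorem~\ref{th:critextension} itself is stated for matrix codes and counts $\F_q$-tuples; and your converse step should note that since $\U\cap\bigcap_i H_i=0$ and $\dim_{\F_q}\U=n$ forces $\codim_{\F_{q^m}}\bigl(\bigcap_i H_i\bigr)\geq\lceil n/m\rceil\geq 1$, the $v_i$ are not all zero, so $D'\neq 0$ and the witness subcode is genuinely nonzero. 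Neither of these is a gap, just a point of hygiene.
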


\begin{remark}
    Although the definition of $q$-system $\mU$ depends on the choice of generator matrix~$G$, the critical exponent is an invariant of the equivalence class of $\mU$.
\end{remark}

\section{Critical Exponents of Some Rank-Metric Codes}\label{sec:lower_bound}
In this section we compute the critical exponents of $q$-polymatroids arising from some families of $\F_q$-$[n\times m, k]$ rank-metric codes. In the case of a non-degenerate $\Fqm$-$[n,k]$ vector rank-metric code, we show that its corresponding $q$-matroid has critical exponent equal to $\lceil \frac{n}{m} \rceil$. However, determining the critical exponent of an arbitrary representable $q$-polymatroid is not so straightforward.
First we give a lower bound on the critical exponent of the $q$-polymatroid induced by a rank-metric code.

\begin{proposition}\label{prop:LB_critical_exponent}
Let $C$ be  a non-degenerate $\F_q$-$[n\times m,k]$ code and let $\M=\M[C]$. Then
\begin{equation}\label{eq:bound_critical}
    \left\lceil \frac{n}{m} \right\rceil \leq \crit(\M)\leq k.
\end{equation}
\end{proposition}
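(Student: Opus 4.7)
The plan is to prove the two inequalities separately, both being rather direct consequences of the definitions of $\crit(\M)$ and $\supp(C)$.

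For the upper bound $\crit(\M)\leq k$, I would simply observe that since $C$ is non-degenerate, $\supp(C)=E$, and $C$ itself is a subcode of $C$ of $\F_q$-dimension $k$ whose support equals $E$. By the characterisation
$$\crit(\M)=\min\{t\in[k]: \exists\, D\leq C \text{ with } \dim_{\F_q} D=t,\ \supp(D)=E\}$$
recorded just after the definition of the critical exponent, the inequality is immediate.

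For the lower bound, I would take any $t$-dimensional subcode $D\leq C$ with $\supp(D)=E$ and pick an $\F_q$-basis $X_1,\ldots,X_t$ of $D$. Then
$$\supp(D)=\sum_{i=1}^t \colsp(X_i),$$
and since each $X_i\in \F_q^{n\times m}$ has column space spanned by at most $m$ vectors in $\F_q^n$, we have $\dim_{\F_q}\colsp(X_i)\leq m$ for every $i$. Subadditivity of dimension then gives
$$n=\dim_{\F_q}\supp(D)=\dim_{\F_q}\!\Bigl(\sum_{i=1}^t\colsp(X_i)\Bigr)\leq \sum_{i=1}^t\dim_{\F_q}\colsp(X_i)\leq tm,$$
so $t\geq n/m$, and since $t$ is an integer, $t\geq\lceil n/m\rceil$. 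Taking the minimum over all such $D$ yields $\crit(\M)\geq\lceil n/m\rceil$.

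There is no real obstacle here; the argument is purely dimensional and uses only that the codewords of $C$ are $n\times m$ matrices (so their column-supports have $\F_q$-dimension at most $m$) together with non-degeneracy. The only subtle point worth flagging is that the minimum in the definition of $\crit(\M)$ is indeed attained, which is ensured by the non-degeneracy hypothesis (equivalently, $\M$ has no loop), so that $\crit(\M)$ is finite and the upper bound $\crit(\M)\leq k$ makes sense.
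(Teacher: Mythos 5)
Your proposal is correct and follows essentially the same line as the paper's own proof: the upper bound from non-degeneracy of $C$ itself, and the lower bound from the fact that each codeword has column space of $\F_q$-dimension at most $m$ (equivalently rank at most $\min(n,m)$), so $t$ codewords can cover at most $tm$ dimensions of $E$. The only cosmetic difference is that the paper reasons directly with a $t$-tuple of codewords achieving $\crit(\M)=t$, whereas you pass through a basis of a minimal subcode; these are interchangeable given the paper's remark that the critical exponent equals the minimum dimension of a subcode with full support.
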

\begin{proof}
The upper bound follows immediately from the fact that $\dim(C)=k$ and $\crit(\M)\leq \ell$ if $C$ has a non-degenerate subcode of dimension at most $\ell$. 
Now suppose that $\crit(\M)=t$ for some positive integer $t$. Then there exist $X_1,\ldots, X_t\in C$ such that 
$$ \sum_{i=1}^t\colsp(X_i) =\F_q^n.$$
Clearly, the maximum rank that a codeword in $C$ can have is at most $\min(m,n)$, so we get that 
$$ n= \dim_{\F_q}\left(\sum_{i=1}^t\colsp(X_i)\right) \leq mt,$$
and hence we conclude that $t\geq \left\lceil \frac{n}{m}\right\rceil$.
\end{proof}

We now show that the lower bound in Eq.~\eqref{eq:bound_critical} is sharp, i.e. there are codes whose critical exponents meets the bound with equality. 
To this end, if we can guarantee that a code contains enough codewords of rank $\min(n,m)$ with certain distinct supports, then we are done. 
We first consider $\F_{q^m}$-linear codes, making use of the following observation.

\begin{lemma}\cite[Proposition 3.11]{alfarano2021linear}\label{lem:max_rank_weight}
    Let $C$ be a non-degenerate $\F_{q^m}$-$[n,k]$ code, then $C$ contains a codeword of rank equal to $\min(m,n)$.
\end{lemma}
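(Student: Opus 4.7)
The plan is to pass to the $q$-system $\mU \leq \F_{q^m}^k$ associated with a generator matrix $G$ of $C$. By Theorem~\ref{th:connection}, $\rk(xG) = n - \dim_{\F_q}(\mU \cap \langle x \rangle^\perp)$ for every $x \in \F_{q^m}^k$, and using the natural identification $\F_{q^m}^k / \langle x \rangle^\perp \cong \F_{q^m}$ given by $y + \langle x \rangle^\perp \mapsto x \cdot y$, this rewrites as $\rk(xG) = \dim_{\F_q}(x \cdot \mU)$, where $x \cdot \mU := \{x \cdot u : u \in \mU\}$ is an $\F_q$-subspace of $\F_{q^m}$. The problem thus reduces to exhibiting some $x \in \F_{q^m}^k$ for which this $\F_q$-subspace attains its maximal possible dimension $\min(n, m)$. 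In both cases below I will bound the number of ``bad'' $x$ (for which the dimension is strictly smaller) and show that this bound is strictly less than $|\F_{q^m}^k| = q^{mk}$.

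If $n \leq m$, being bad means the $\F_q$-linear map $\mU \to \F_{q^m}$, $u \mapsto x \cdot u$, fails to be injective, i.e.\ $x \cdot u = 0$ for some nonzero $u \in \mU$. For each fixed $u \neq 0$, the annihilator $\{x : x \cdot u = 0\}$ is an $\F_{q^m}$-hyperplane of $\F_{q^m}^k$, hence has $q^{m(k-1)}$ elements. A union bound over $\mU \setminus \{0\}$ gives at most $(q^n - 1) q^{m(k-1)}$ bad $x$, which is strictly less than $q^{mk}$ precisely because $n \leq m$.

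If $n > m$, being bad means $x \cdot \mU$ is contained in some $\F_q$-hyperplane $W = \ker \tau$ of $\F_{q^m}$, where $\tau : \F_{q^m} \to \F_q$ is a nonzero $\F_q$-linear functional. Fixing such a $W$ and an $\F_q$-basis $u_1, \ldots, u_n$ of $\mU$, the bad $x$ for this $W$ form exactly the kernel of the $\F_q$-linear map $T_W : \F_{q^m}^k \to \F_q^n$, $T_W(x) = (\tau(x \cdot u_1), \ldots, \tau(x \cdot u_n))$. The key step, and the main obstacle, is to show that $T_W$ is surjective, so that $|\ker T_W| = q^{mk - n}$. This is exactly where non-degeneracy is used: if $T_W$ were not surjective, there would exist $v^* = (v_1^*, \ldots, v_n^*) \in \F_q^n \setminus \{0\}$ annihilating its image, i.e.\ $\tau(x \cdot u^*) = 0$ for every $x \in \F_{q^m}^k$, where $u^* := \sum_i v_i^* u_i \in \mU$ is nonzero since the $u_i$ are $\F_q$-linearly independent. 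But then $x \mapsto x \cdot u^*$ is a nonzero $\F_{q^m}$-linear functional on $\F_{q^m}^k$, so its image is all of $\F_{q^m}$, contradicting that it lies in the proper subspace $W$. Summing $|\ker T_W| = q^{mk - n}$ over the $(q^m - 1)/(q - 1)$ such hyperplanes $W$ gives at most $\frac{q^m - 1}{q - 1} q^{mk - n}$ bad $x$, which is strictly less than $q^{mk}$ since $\frac{q^m - 1}{q - 1} < q^m \leq q^n$.
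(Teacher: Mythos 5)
The paper does not prove this lemma; it is cited as \cite[Proposition~3.11]{alfarano2021linear}, so there is no in-paper argument to compare against. Your proof is correct and self-contained. The reduction $\rk(xG)=\dim_{\F_q}(x\cdot\mU)$ follows cleanly from Theorem~\ref{th:connection} together with rank--nullity applied to $\phi_x:\mU\to\F_{q^m}$, $u\mapsto x\cdot u$, using $\dim_{\F_q}\mU=n$ (which is where non-degeneracy enters via Proposition~\ref{prop:matrix_nondeg}). In the case $n\le m$ the bound $(q^n-1)q^{m(k-1)}<q^{mk}$ is immediate. In the case $n>m$, the surjectivity of $T_W$ is the right observation: any nonzero $v^*\in\F_q^n$ annihilating the image would force the nonzero $\F_{q^m}$-linear functional $x\mapsto x\cdot u^*$ (with $u^*=\sum v_i^*u_i\ne 0$ by $\F_q$-independence of the $u_i$) to land inside $\ker\tau\subsetneq\F_{q^m}$, which is impossible; then $|\ker T_W|=q^{mk-n}$ and the count $\frac{q^m-1}{q-1}\,q^{mk-n}<q^m\cdot q^{mk-n}\le q^{mk}$ finishes. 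One small remark: the $n\le m$ case also implicitly relies on non-degeneracy (to guarantee $\dim_{\F_q}\mU=n$, so that injectivity of $\phi_x$ yields rank exactly $n$), which is worth making explicit since you single out the $n>m$ case as ``where non-degeneracy is used.''
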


\begin{theorem}\label{thm:n/m_nondeg}
   Let $C$ be  a non-degenerate $\F_{q^m}$-$[n,k]$ code and let $\M=\M[C]$ be the $q$-matroid induced by $C$. Then
   $$ \crit(\M)= \left\lceil \frac{n}{m}\right\rceil.$$
\end{theorem}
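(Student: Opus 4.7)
The plan is to combine Proposition \ref{prop:LB_critical_exponent}, which already supplies $\crit(\M) \geq \lceil n/m \rceil$, with a matching upper bound obtained by constructing an explicit subcode of $C$ of dimension $t := \lceil n/m \rceil$ whose support is all of $\F_q^n$. Equivalently, I would exhibit codewords $c_1,\dots,c_t \in C$ with $\sum_{i=1}^t \supp(c_i) = \F_q^n$. The construction proceeds by induction on $n$, iterating "pick a max-rank codeword, then puncture away its support."

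For the base case $n \leq m$, we have $t = 1$, and Lemma \ref{lem:max_rank_weight} supplies $c_1 \in C$ of rank $\min(m,n)=n$, so $\supp(c_1) = \F_q^n$ as required. For the inductive step $n > m$, use Lemma \ref{lem:max_rank_weight} to pick $c_1 \in C$ of rank $m$, and set $U_1 := \supp(c_1)$, an $m$-dimensional subspace of $\F_q^n$. Choose $A \in \GL_n(q)$ such that $A^T U_1 = \langle e_{n-m+1},\dots,e_n\rangle$, put $I := \{n-m+1,\dots,n\}$, and form the punctured code $C' := \Pi(C,A,I) \leq \F_{q^m}^{n-m}$. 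By Remark \ref{rem:pucture+nondegeneracy}, $C'$ is a non-degenerate $\F_{q^m}$-$[n-m,k']$ code, so the inductive hypothesis gives $d_2,\dots,d_t \in C'$ with $\sum_{i=2}^t \supp(d_i) = \F_q^{n-m}$, since $\lceil (n-m)/m \rceil = t-1$. For each $i \geq 2$, lift $d_i$ to a preimage $c_i \in C$ with $(c_iA)_I = d_i$.

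To verify $\sum_{i=1}^t \supp(c_i) = \F_q^n$, I would use the identities $\Gamma(cA) = A^T\Gamma(c)$ and hence $\supp(cA) = A^T\supp(c)$, together with the fact that deleting the rows of $\Gamma(cA)$ indexed by $I$ realises the coordinate projection $\pi:\F_q^n \to \F_q^{n-m}$ onto the first $n-m$ coordinates. Under the choice of $A$, one has $A^T U_1 = \ker\pi$, while by construction
\[
\pi\Bigl(A^T \sum_{i=2}^t \supp(c_i)\Bigr) = \sum_{i=2}^t \supp(d_i) = \F_q^{n-m}.
\]
Thus $A^T \sum_{i=1}^t \supp(c_i)$ contains $\ker\pi$ and projects onto $\F_q^{n-m}$, so it equals $\F_q^n$; applying $(A^T)^{-1}$ yields $\sum_{i=1}^t \supp(c_i) = \F_q^n$, giving $\crit(\M) \leq t$.

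The main obstacle, if any, is the bookkeeping in the reduction step: choosing the linear change of coordinates $A$ so that puncturing exactly "kills" the chosen max-rank codeword's support $U_1$, and then translating the inductive conclusion about $C'$ back into a statement about supports of lifted codewords in $C$. The essential ingredients making the induction go through are the existence of a rank-$\min(n,m)$ codeword in every non-degenerate $\F_{q^m}$-linear code (Lemma \ref{lem:max_rank_weight}) and the preservation of non-degeneracy under puncturing (Remark \ref{rem:pucture+nondegeneracy}).
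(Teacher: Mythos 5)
Your proof is correct and follows essentially the same strategy as the paper: combine the lower bound from Proposition \ref{prop:LB_critical_exponent} with an upper bound obtained by inducting on $n$, using Lemma \ref{lem:max_rank_weight} to extract a rank-$m$ codeword and Remark \ref{rem:pucture+nondegeneracy} to puncture away its support while preserving non-degeneracy. The only differences are cosmetic — you puncture the last $m$ coordinates while the paper punctures the first $m$, and your verification via $\Gamma(cA)=A^T\Gamma(c)$ and the projection $\pi$ is written more explicitly than the paper's; your base case $n\leq m$ also cleanly covers the boundary $n=m$, which the paper handles somewhat implicitly.
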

\begin{proof}
First of all, observe that $ \crit(\M)\geq \left\lceil \frac{n}{m}\right\rceil$, from Proposition \ref{prop:LB_critical_exponent}.
Write $n=am+b$, with $a,b\in\N_0$ and $0\leq b< m$. We will prove the statement by induction on $a$.
If $a=0$, then $n<m$. In this case, since $C$ is non-degenerate, from Lemma~\ref{lem:max_rank_weight} we immediately conclude that there exists a codeword in $C$ of rank $n$, hence $\crit(\M)=1=\left\lceil \frac{n}{m}\right\rceil$.
Assume that an $\F_{q^m}$-$[n',k]$ non-degenerate rank-metric code with length $n'=a'm+b'$, and $a'<a$, corresponds to a $q$-matroid with critical exponent  $\left\lceil \frac{n'}{m}\right\rceil$.
Since $C$ is non-degenerate, from Lemma \ref{lem:max_rank_weight} there exists a codeword $c\in C$, with rank weight equal to $m$. 
Hence there exists $A\in\GL(n,\F_q)$ such that $x=(x_1,\ldots,x_m,0,\ldots,0)\in CA = \{ xA: x \in C\}$, where $\dim(\supp(cA))=m$. 
It follows that $\langle e_1,\ldots,e_m\rangle_{\Fq} = \supp(x)$, where $e_i$ denotes the $i$-th standard basis vector of $\Fq^n$, for each $i \in [n]$.

\noindent
Let $C_1=\Pi(C,A,[m])\leq \F_{q^m}^{n-m}$, i.e. $C_1$ is the $\fqm$-linear code of length $n-m$ obtained from $CA$ by deleting the first $m$ coordinates of every codeword of $CA$. 
As observed in Remark \ref{rem:pucture+nondegeneracy}, $C_1$ is a non-degenerate code of length $n^\prime=n-m=(a-1)m+b$ and so, by the induction hypothesis, the critical exponent of $\M[C_1]$ is $\left\lceil \frac{n-m}{m}\right\rceil=a$. Hence, there exist some $u^{(1)},\ldots,u^{(a)} \in C_1$ such that $\sum_{i=1}^a\supp(u^{(i)})=
\F_{q}^{n-m}$. Let $x^{(1)},\ldots,x^{(a)} \in CA$ be the codewords in $CA$ corresponding to $u^{(1)},\ldots,u^{(a)}$. In other words,  $u^{(i)}$ is obtained by deleting the first $m$ coordinates of $x^{(i)}$, for every $i=1,\ldots, a$. It is not hard to see that $\langle (v^{(1)},0,\ldots ,0)+e_{m+1},\ldots,(v^{(n-m)},0,\ldots ,0)+e_{n} \rangle_{\Fq} \leq \sum_{i=1}^a\supp(x^{(i)})$, for some $v^{(i)} \in \Fq^m$. 
It follows that $\Fq^{n} = \supp(x) + \sum_{i=1}^a\supp(x^{(i)})$ and so $\crit(\M[CA]) = a+1 = \lceil \frac{n}{m} \rceil$. Since the critical exponent is an invariant of code equivalence, the result follows.
\end{proof}

\begin{remark}
    An alternative proof of Theorem \ref{thm:n/m_nondeg} can be given using the equivalent description of a non-degenerate $\F_{q^m}$-$[n,k]$ rank-metric code as a $q$-system and \cite[Lemma 4.6]{polverino2023divisible}. 
\end{remark}

In general, a non-degenerate $\F_q$-$[n\times m,k]$ rank-metric code does not necessarily contain a codeword of rank $\min(n,m)$; see Example \ref{ex:3x3}, for instance. The next result shows that in some cases we can ensure the existence of such a codeword. 
We will use the fact that for any $\Fq$-$[n \times m,k]$ matrix code $C$, with $m \leq n$, if $k=t \cdot n $ and $\max\{\rk(X) : X \in C\}\leq t$, then $C = U \otimes \Fq^m$ for some subspace $U$ of dimension equal to $t$; see \cite[Theorem~3]{meshulam1985maximal}.

\begin{proposition}\label{lem:opt_anticode}
Let $\ell\geq 2$ and let $A\in\GL(\ell m,\F_q)$. Let $C$ be an  $\F_q$-$[\ell m \times m, m(\ell-1)(m-1)]$ non-degenerate rank-metric code. Let $I\subseteq[\ell m]$ be a set of indices of size $m$. If the punctured code $\Pi(C,A,I)$ has dimension equal to $m(\ell-1)(m-1)$, then $\Pi(C,A,I)$ contains a codeword of rank~$m$. In particular, $C$ contains a codeword of rank~$m$.
\end{proposition}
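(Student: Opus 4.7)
The plan is to apply the Meshulam-type bound cited just before the proposition to the punctured code $C':=\Pi(C,A,I)$ and then reach a contradiction with non-degeneracy.

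First, I would record the parameters. By Remark \ref{rem:pucture+nondegeneracy}, the punctured code $C'$ is a non-degenerate matrix code sitting in $\F_q^{(\ell-1)m\times m}$, and by hypothesis it has $\F_q$-dimension $m(\ell-1)(m-1)$. Setting $n':=(\ell-1)m$ and $t:=m-1$, one has $\dim_{\F_q}(C')=t\cdot n'$ and $n'\geq m$ since $\ell\geq 2$, so the hypotheses of the cited theorem of Meshulam are satisfied once we bound the maximum rank.

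Next, I would argue by contradiction. Suppose no codeword of $C'$ has rank $m$. Since every element of $C'$ is an $n'\times m$ matrix, this means $\max\{\rk(X):X\in C'\}\leq t=m-1$. By \cite[Theorem~3]{meshulam1985maximal}, we then conclude that $C'=U\otimes \F_q^m$ for some $\F_q$-subspace $U\leq \F_q^{n'}$ with $\dim_{\F_q}(U)=t=m-1$. But the column spaces of all matrices in $U\otimes \F_q^m$ are contained in $U$, so
\[
\supp(C')=\sum_{M\in C'}\colsp(M)\leq U,
\]
which has dimension $m-1<n'$. This contradicts the non-degeneracy of $C'$. Therefore $C'$ must contain a codeword of rank $m$.

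For the final assertion, pick $Y\in C'$ with $\rk(Y)=m$ and let $X\in C$ satisfy $Y=(AX)_I$. Since deleting rows cannot increase the rank, $\rk(AX)\geq \rk(Y)=m$, and since $A$ is invertible, $\rk(X)=\rk(AX)\geq m$; as $X\in \F_q^{\ell m\times m}$ has rank at most $m$, we conclude $\rk(X)=m$. I do not anticipate any real obstacle here: the whole argument is a direct plug-in to Meshulam's structure theorem for rank-bounded matrix spaces, and the only care needed is to verify the dimension equality $k'=t\cdot n'$ and the inequality $n'\geq m$ so that the theorem applies.
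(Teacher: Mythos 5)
Your argument tracks the paper's own proof exactly: assume by contradiction that every codeword of $C'=\Pi(C,A,I)$ has rank at most $m-1$, invoke Meshulam's theorem, and deduce that $\supp(C')$ is a proper subspace of $\F_q^{n'}$, contradicting Remark~\ref{rem:pucture+nondegeneracy}. By spelling out what the cited structure would have to be, however, your write-up makes a real gap (shared with the paper) visible. You conclude $C'=U\otimes\F_q^m$ with $U\leq\F_q^{n'}$ of dimension $t=m-1$; such a space has $\F_q$-dimension $(m-1)m$, whereas $\dim_{\F_q}(C')=(m-1)(\ell-1)m$, so these agree only when $\ell=2$. More fundamentally, the equality case of the Flanders bound ($\dim=tn'$ with $m\leq n'$, all ranks $\leq t$) is realised not only by spaces with restricted column supports but also by spaces with a common null space: fixing a $1$-dimensional $W\leq\F_q^m$ and taking $C'=\{M\in\F_q^{n'\times m}:MW=0\}$ gives $\dim C'=(m-1)n'$, maximum rank $m-1$, and $\supp(C')=\F_q^{n'}$. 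For $n'>m$ the column-restricted form is dimensionally impossible and only this common-kernel form survives, so non-degeneracy of $C'$ is not contradicted and the step ``$C'=U\otimes\F_q^m$'' is not forced by Meshulam's theorem.

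This is not a repairable technicality: the first assertion of the proposition fails for these parameters. Take $m=2$, $\ell=2$, $q$ arbitrary, and let $\{v,w\}$ be a basis of $\F_q^2$. The code $C\leq\F_q^{4\times 2}$ spanned by $e_1v^{\top}+e_3w^{\top}$ and $e_2v^{\top}+e_4w^{\top}$ is non-degenerate of dimension $2=m(\ell-1)(m-1)$, yet $\Pi(C,I_4,\{3,4\})=\{uv^{\top}:u\in\F_q^2\}$ also has dimension $2$ but maximum rank $1<m$. (An analogous construction works for any $\ell\geq 2$ and $m\geq 2$.) So neither your argument nor the paper's closes as written, and one would need either a corrected statement of the bounded-rank structure theorem with the correct side of compression identified, or additional hypotheses ruling out the common-kernel case.
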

\begin{proof}
    Let $I\subseteq[\ell m]$ be a set of size $m$. Let $\Pi(C,A,I)\leq \F_q^{m(\ell-1)\times m}$ be the code obtained by deleting the $m$ rows of each matrix in $AC$ indexed by $I$. Assume that $\Pi(C,A,I)$ is an $\F_q$-$[m(\ell-1)\times m, m(\ell-1)(m-1)]$ code.
    Towards a contradiction, suppose that every word of $\Pi(C,A,I)$ has rank at most $ m-1$.  
    By hypothesis, we have that
    $\dim(\Pi(C,A,I)) = m(\ell-1)(m-1).$
    Therefore, by \cite[Theorem~3]{meshulam1985maximal} $C$ is degenerate, which is a contradiction to Remark~\ref{rem:pucture+nondegeneracy}.
\end{proof}

Computing the critical exponent of $\M[C]$ for an arbitrary matrix code is difficult. However, with some extra assumptions on the code we can compute it.

We now consider the class of $\F_q$-$[n\times m,k,d]$ 1-\textbf{binomial moment determined} (BMD) rank-metric codes, which are precisely those codes for which
\[
    \min(n,m)-d < d^\perp
\] 
and which was shown to be the disjoint union of the DQMRD codes and the MRD codes \cite{cotardo20}.
We will use the following result from \cite{rav}.
\begin{lemma}\label{lem:cucuperp}
    Let $C$ be an $\F_q$-$[n\times m,k,d]$ code
    and let $U$ be a $u$-dimensional subspace of $\Fq^n$.
    Then 
    \[
       |C_U|= q^{k-mu}|C^{\perp}_{U^\perp}|.
    \]
    In particular,
    \[
    |C_U|= \left\{ \begin{array}{ll}
    1           & \text{ if } n-d     < u, \\
    q^{k-mu}& \text{ if } d^\perp > u.\\    
    \end{array}  \right.
    \]
\end{lemma}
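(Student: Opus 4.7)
The plan is to prove both formulas by combining a rank--nullity count with a trace-duality identification, after adapting coordinates. First, choose an $\F_q$-basis of $\F_q^n$ so that $U = \langle e_{n-u+1},\ldots,e_n \rangle$; then $U^\perp$ is spanned by the first $n-u$ standard basis vectors, and $\colsp(A) \leq U^\perp$ is equivalent to saying the last $u$ rows of $A$ vanish. Under this identification,
\[
C_U = \ker(\pi|_C),
\]
where $\pi : \F_q^{n\times m}\to \F_q^{u\times m}$ is the projection onto the last $u$ rows. Rank--nullity then gives $|\pi(C)|\cdot |C_U| = q^k$. Similarly, $C^\perp_{U^\perp}$ consists of matrices in $C^\perp$ whose first $n-u$ rows vanish, so projection onto the last $u$ rows embeds $C^\perp_{U^\perp}$ as a subspace $D$ of $\F_q^{u\times m}$ of the same cardinality.

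The second step is to show that $D = \pi(C)^\perp$ with respect to the trace inner product on $\F_q^{u\times m}$. Splitting $A = \bigl(\begin{smallmatrix}A_1\\ A_2\end{smallmatrix}\bigr)$ and $B = \bigl(\begin{smallmatrix}B_1\\ B_2\end{smallmatrix}\bigr)$ with blocks of the appropriate sizes, we have $\Tr(AB^T) = \Tr(A_1B_1^T) + \Tr(A_2B_2^T)$. For $B \in C^\perp_{U^\perp}$ we have $B_1 = 0$, so orthogonality to $C$ reduces to $\Tr(A_2 B_2^T)=0$ for all $A\in C$, giving $D \subseteq \pi(C)^\perp$. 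Conversely, for any $B_2 \in \pi(C)^\perp$, setting $B := \bigl(\begin{smallmatrix}0\\ B_2\end{smallmatrix}\bigr)$ produces a codeword of $C^\perp$ with first block zero, hence $B \in C^\perp_{U^\perp}$ and $B_2 \in D$. Therefore $|C^\perp_{U^\perp}| = |D| = q^{um}/|\pi(C)| = q^{um-k}|C_U|$, which rearranges to $|C_U| = q^{k-mu}|C^\perp_{U^\perp}|$.

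For the two special cases, I would invoke the minimum-distance hypotheses directly. If $n-d < u$, then every $A \in C_U$ satisfies $\rk(A) \leq \dim U^\perp = n-u < d$, forcing $A = 0$ and hence $|C_U|=1$. If $u < d^\perp$, then every $B \in C^\perp_{U^\perp}$ satisfies $\rk(B) \leq \dim U = u < d^\perp$, so $B=0$; thus $|C^\perp_{U^\perp}|=1$ and the first identity gives $|C_U| = q^{k-mu}$.

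The main obstacle is essentially bookkeeping: one must keep careful track of which block of rows encodes which support condition and verify the orthogonal-complement identification under the trace form. There is no genuine technical difficulty once the coordinates are fixed.
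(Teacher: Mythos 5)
The paper simply cites this lemma from \cite{rav} and does not reproduce a proof, so there is no internal argument to compare against; what follows is an assessment of your proof on its own terms.

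Your core mechanism -- rank--nullity on the row-projection $\pi$ to get $|\pi(C)|\cdot|C_U|=q^k$, and trace duality to identify the projection of $C^\perp_{U^\perp}$ with $\pi(C)^\perp$ inside $\F_q^{u\times m}$ -- is correct, and the block computation $\Tr(AB^T)=\Tr(A_1B_1^T)+\Tr(A_2B_2^T)$ is exactly what makes the duality step work. The two special cases also follow immediately from the rank bounds $\rk(A)\le\dim U^\perp=n-u$ and $\rk(B)\le\dim U=u$, as you argue.

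The one place that needs more care is the opening normalization. Changing basis on $\F_q^n$ by $Q\in\GL_n(q)$ sends a subspace $V$ to $QV$ but sends $V^\perp$ to $Q^{-T}V^\perp$, and likewise sends $C$ to $QC$ but $C^\perp$ to $Q^{-T}C^\perp$. So one cannot simply pick a basis putting $U$ in the last $u$ coordinates and then assert that $U^\perp$ becomes the first $n-u$ coordinates: that would require the new basis to have block-diagonal Gram matrix, which is impossible whenever $U$ is isotropic (then $U\cap U^\perp\neq 0$, whereas $\langle e_{n-u+1},\ldots,e_n\rangle\cap\langle e_1,\ldots,e_{n-u}\rangle=0$). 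The correct justification of the reduction is that both sides of the identity $|C_U|=q^{k-mu}|C^\perp_{U^\perp}|$ are invariant under the simultaneous substitution $(C,U)\mapsto(QC,\,Q^{-T}U)$, since $(QC)_{Q^{-T}U}=QC_U$ and $\big((QC)^\perp\big)_{(Q^{-T}U)^\perp}=Q^{-T}\!\left(C^\perp_{U^\perp}\right)$. Choosing $Q$ so that $Q^{-T}U=\langle e_{n-u+1},\ldots,e_n\rangle$ then gives the normalization you want, after which the remainder of your argument goes through as written. This is a minor repair, not a change of strategy -- the proof is essentially sound.
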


\begin{lemma}\label{thm:supp}
 Let $n>m$ and let $C$ be an $\F_q$-$[n\times m,k,d]$ $1$-BMD code such that
 $n+d \leq 2m$.
 Let $U$ be an $(n-m)$-dimensional subspace of $\Fq^n$. 
 Then $C_{=U} \neq \emptyset$ and hence $U^\perp$ is a support of a codeword of $C$.
\end{lemma}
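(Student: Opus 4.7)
The plan is to prove the lemma by a double-counting argument: set $W:=U^\perp$, which has dimension $m$, and show that $|C_U|$ strictly exceeds the number of codewords of $C_U$ whose support is a proper subspace of $W$; every remaining codeword then has support equal to $W$, giving $C_{=U}\neq\emptyset$.

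First I would use the hypothesis and Lemma~\ref{lem:cucuperp} to compute $|C_U|$. The $1$-BMD inequality $d+d^\perp > m$ combined with $n+d\le 2m$ gives
\[
d^\perp \;>\; m-d \;\geq\; m-(2m-n)\;=\;n-m,
\]
so $d^\perp>\dim U=n-m$, and part~(2) of Lemma~\ref{lem:cucuperp} yields the explicit formula $|C_U|=q^{k-m(n-m)}$.

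Next I would bound the number of codewords of $C_U$ whose support is a proper subspace of $W$. Every such codeword has support contained in some hyperplane $H\le W$, so this number is at most $\sum_{H}|C_{H^\perp}|$, where $H$ ranges over the $\qbin{m}{1}{q}=(q^m-1)/(q-1)$ hyperplanes of $W$. Each $H^\perp$ has dimension $n-m+1$, and as long as $d^\perp\ge n-m+2$, Lemma~\ref{lem:cucuperp} again applies to give $|C_{H^\perp}|=q^{k-m(n-m+1)}=q^{-m}|C_U|$. A direct calculation then yields
\[
|C_U|-\sum_H|C_{H^\perp}| \;=\; |C_U|\cdot\frac{q^{m+1}-2q^m+1}{q^m(q-1)} \;>\;0
\]
for every prime power $q\ge 2$, since $q^{m+1}-2q^m+1=q^m(q-2)+1\ge 1$. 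This produces the desired codeword, and covers all MRD codes satisfying the hypothesis and all DQMRD codes with $d<2m-n$.

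The only remaining configuration is the boundary case of a DQMRD code with $d=2m-n$, so $d^\perp=n-m+1$ exactly; here Lemma~\ref{lem:cucuperp} no longer yields $|C_{H^\perp}|$ directly, because $\dim H^\perp=d^\perp$ falls on the boundary of both explicit regimes. I would resolve this using the fact that for a $1$-BMD code $|C_V|$ depends only on $\dim V$, together with the duality relation $|C_{V^\perp}|=q^{k-m(n-\dim V)}|C^\perp_V|$ from Lemma~\ref{lem:cucuperp} and the known minimum-weight enumerator of the DQMRD code $C^\perp$. This gives an explicit value for $|C_{H^\perp}|$ in terms of $q,n,m,d$ and allows one to verify the strict inequality $|C_U|>\sum_H|C_{H^\perp}|$ a second time. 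I expect this boundary case to be the main obstacle, as it is the only regime in which the coarse inclusion-exclusion bound above is not immediate and requires the fine structure of DQMRD codes to close.
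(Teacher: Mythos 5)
Your argument follows the paper's proof almost step for step: use Lemma~\ref{lem:cucuperp} to obtain $|C_U| = q^{k-m(n-m)}$, bound the codewords of $C_U$ with proper support by $\sum_H |C_{H^\perp}|$ over the hyperplanes $H$ of $W = U^\perp$ (equivalently, over the covers $V$ of $U$ with $V^\perp = H$), and subtract. You are, however, more careful than the paper about when the formula $|C_{H^\perp}| = q^{k-m(n-m+1)}$ actually applies: Lemma~\ref{lem:cucuperp} delivers it only under the hypothesis $d^\perp > n-m+1$, which is automatic for MRD codes (there $d^\perp = m-d+2 \geq n-m+2$) and for DQMRD codes with $n+d<2m$, but need not hold when $C$ is DQMRD with $n+d = 2m$ exactly, since then $d^\perp = m-d+1 = n-m+1$. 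The paper's proof writes down $|C_V| = q^{k-m(n-m+1)}$ for $\dim V = n-m+1$ without verifying this hypothesis, so it shares precisely the gap you flag. Your proposed patch -- compute $|C_{H^\perp}|$ from the known weight distribution of the DQMRD code $C^\perp$ via the duality identity of Lemma~\ref{lem:cucuperp} and re-verify the inequality -- is the natural route to close it, but it is only a plan as written, so your argument, like the paper's, does not yet fully resolve the boundary regime.
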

\begin{proof}
   Since $C$ is $1$-BMD and $n>m$, we have $d^\perp > m-d \geq n-m =\dim(U)$ and
   so
   from Lemma \ref{lem:cucuperp}, we have that $|C_U|=q^{k-m(n-m)}$.
   Furthermore, if $V$ is any subspace of $\Fq^n$ of dimension $n-m+1$, then $|C_V|=q^{k-m(n-m+1)}$.
   Now, 
   \begin{align*}
      C_U= \;&C_{=U} \cup \{ X \in C : \colsp(X) \leq V^\perp, \text{ for some }V \text{ s.t. } U \leq V, \dim(V) = n-m+1\} \\
      = \;& C_{=U} \cup \bigcup_{\substack{V:U \leq V, \\ \dim(V) = n-m+1}} C_V, 
   \end{align*}
   and so 
 	\begin{align*}
 	q^{k-mu}=&|C_U|
 	            \leq |C_{=U}|+\sum_{\substack{V:U \leq V, \\ \dim(V) = n-m+1}}|C_V|
 	            \leq |C_{=U}| + \qbin{m}{1}{q} q^{k-m(n-m+1)}.
 	\end{align*}
 	Therefore,
 	\[
 	  |C_{=U}| \geq q^{k-m(n-m)}\left(1-\qbin{m}{1}{q} q^{-m}\right)>0.
 	\]
 	It follows that $C_{=U}$ is non-empty and hence $C$ has a codeword whose support is $U^\perp$.
\end{proof}

\begin{theorem}\label{prop:criticalMRD}
Let $C$ be an $\F_q$-$[n\times m,k,d]$ MRD code and let $\M=\M[C]$. Then
$$\displaystyle \crit(\M)=\left\lceil\frac{n}{m}\right \rceil,$$ if one of the following conditions hold:
\begin{enumerate}
    \item $n\leq m$,
    \item $m<n\leq 2m-d$,
    \item $k=n=m+1$.
\end{enumerate} 
\end{theorem}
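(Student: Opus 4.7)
The lower bound $\lceil n/m \rceil \leq \crit(\M)$ is already handed to us by Proposition~\ref{prop:LB_critical_exponent}, so my plan in each case is to construct an explicit set of $\lceil n/m \rceil$ codewords of $C$ whose supports jointly cover $\F_q^n$.

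In Case~(1), where $n \leq m$ and hence $\lceil n/m\rceil = 1$, I need only a single codeword of rank $n$. The cleanest option is to invoke the known weight distribution of MRD codes (Lemma~\ref{MRD properties}(2)), which gives $W_n(C) > 0$. For a self-contained argument I would instead apply Lemma~\ref{lem:cucuperp} with $U = \{0\}$: since MRD forces $d^\perp = n - d + 2 \geq 2$, every $1$-dimensional subspace $V$ of $\F_q^n$ satisfies $|C_V| = q^{k-m}$, and a union bound across the $\qbin{n}{1}{q}$ such subspaces yields
\[
|C_{=\{0\}}| \;\geq\; q^k - \qbin{n}{1}{q}\,q^{k-m} \;>\; 0,
\]
because $\qbin{n}{1}{q} < q^n \leq q^m$ when $n \leq m$.

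In Case~(2), where $m < n \leq 2m - d$ and thus $\lceil n/m\rceil = 2$, the hypothesis rewrites as $n+d\leq 2m$, which is exactly what Lemma~\ref{thm:supp} requires. That lemma guarantees that every $(n-m)$-dimensional subspace of $\F_q^n$ is of the form $\supp(X)^\perp$ for some $X\in C$. I then pick any two $m$-dimensional subspaces $W_1,W_2\leq \F_q^n$ with $W_1+W_2=\F_q^n$ (possible since $2m\geq n$; e.g., $W_1=\langle e_1,\ldots,e_m\rangle$ and $W_2=\langle e_{n-m+1},\ldots,e_n\rangle$) and apply the lemma to $U_i=W_i^\perp$ to produce codewords $X_1,X_2\in C$ with $\supp(X_i)=W_i$. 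Their supports sum to $\F_q^n$, so $\crit(\M)\leq 2$.

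In Case~(3), where $k=n=m+1$, Theorem~\ref{thm:slb} forces $d=m$, and $C$ becomes an $\F_q$-$[n\times(n-1),n]$ non-degenerate rank-metric code to which Proposition~\ref{prop:pucturing} directly applies, giving $\crit(\M)=2=\lceil n/(n-1)\rceil$ at once.

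The only mildly technical step is Case~(1), where one has to extract a quantitative (weight-count) consequence of the MRD hypothesis rather than a purely structural one; Cases~(2) and~(3) are essentially one-line reductions to results already developed in Section~\ref{sec:lower_bound}.
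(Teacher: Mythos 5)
Your proposal is correct and, for Cases~(2) and~(3), follows exactly the paper's own route: Case~(2) is the same application of Lemma~\ref{thm:supp} to two $m$-dimensional subspaces summing to $\F_q^n$, and Case~(3) is the same one-line reduction to Proposition~\ref{prop:pucturing} (you even implicitly correct a typo in the paper's proof, which writes $k=n=m-1$ where $k=n=m+1$ is meant). The only genuine divergence is in Case~(1). The paper's argument there is a touch more indirect than your shorthand ``Lemma~\ref{MRD properties}(2) gives $W_n(C)>0$'' suggests: it observes that $C$ has the \emph{same} weight distribution as some $\F_{q^m}$-linear MRD code of the same parameters, which is non-degenerate and hence, by Lemma~\ref{lem:max_rank_weight}, contains a rank-$n$ codeword. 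Your alternative via Lemma~\ref{lem:cucuperp} is a welcome, more self-contained substitute: with $u=1$ one has $d^\perp = n-d+2 \geq 2 > u$, so $|C_V| = q^{k-m}$ for every $1$-dimensional $V$, and the union bound $|C_{=\zero}| \geq q^k - \qbin{n}{1}{q}q^{k-m} > 0$ (using $\qbin{n}{1}{q} < q^n \leq q^m$) directly produces a full-rank codeword without appealing to the existence and weight distribution of $\F_{q^m}$-linear MRD codes. That buys a cleaner dependency graph; the paper's route buys brevity by reusing Lemma~\ref{lem:max_rank_weight}, which is already in place for Theorem~\ref{thm:n/m_nondeg}.
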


\begin{proof}
Let $n\leq m$. 
We already observed in Lemma \ref{MRD properties} that MRD codes have the same weight distribution. Moreover, they exist for every choice of parameters $q,n,m,d$. In particular, given an $\F_q$-$[n\times m,m(n-d+1),d]$ MRD code, there exists an $\F_{q^m}$-linear MRD code with the same weight distribution. By Lemma \ref{lem:max_rank_weight}, then $C$ has at least one codeword of rank-weight $n$, hence in this case $\crit(\M[C])=1=\left\lceil\frac{n}{m}\right \rceil.$ 

Now consider the case $2m-d \geq n>m$. By Lemma \ref{thm:supp}, $C_{=U^\perp}$ is non-empty for every $U$ of dimension $m$ and hence for each such $U$ there exists $X \in C$ such that $\colsp(X)=U$. 
The result now follows by taking any collection of $\lceil \frac{n}{m} \rceil$ $m$-dimensional subspaces whose vector space sum is $\Fq^n$.

Finally, when $k=n=m+1$, then $C$ is an MRD code with minimum distance $(n-1)$. Moreover, since every codeword of $C$ is an $n\times (n-1)$ matrix, all the nonzero codewords have rank exactly $(n-1)$. If the supports of all the codewords of $C$ are contained in the same hyperplane, then $C$ is degenerate and we get a contradiction. Hence, there are at least two codewords $M_1, M_2$ with rank $n-1$ whose supports are contained in two distinct hyperplanes in $\F_q^n$, then $\crit(\M)=2$,~since $\colsp(M_1)+\colsp(M_2)=\F_q^n. $  
\end{proof}

Note that in general, the critical exponent of the $q$-polymatroid associated with an $\F_q$-$[n\times m,k]$ rank-metric code need not be $\left\lceil \frac{n}{m}\right\rceil$, as next example shows.

\begin{example}\label{ex:3x3}
Let $C$ be the (non-degenerate) $\F_2$-$[3 \times 3,4,2]$ code generated by the following matrices
$$\begin{pmatrix}
1 & 0 & 0 \\
0 & 1 & 0 \\
0 & 0 & 0
\end{pmatrix}, \quad 
\begin{pmatrix}
0 & 0 & 0 \\
0 & 1 & 0 \\
0 & 0 & 1
\end{pmatrix}, \quad 
\begin{pmatrix}
0 & 1 & 1 \\
0 & 0 & 1 \\
0 & 0 & 0
\end{pmatrix}, \quad 
\begin{pmatrix}
0 & 0 & 0 \\
1 & 0 & 0 \\
1 & 1 & 0
\end{pmatrix}.$$
Every nonzero codeword in $C$ has the same rank $2$, hence, the critical exponent of the $q$-polymatroid $\M[C]$ induced by $C$ is at least $2 $ (and indeed is exactly 2), which strictly exceeds $\left\lceil\frac{n}{m}\right\rceil=1$.
\end{example}

In the discussion above, we pointed out that there are several rank-metric codes whose induced $q$-(poly)matroid has critical exponent that meets the lower bound~\eqref{eq:bound_critical} with equality. In particular, we can summarize all the results above in the following corollary.

\begin{corollary}\label{cor:summary}
   Let $C$ be an $\F_q$-$[n\times m,k,d]$ rank-metric code and let $\M=\M[C]$. If one of the following conditions is satisfied, then $\crit(\M)=\left\lceil\frac{n}{m}\right\rceil$. 
   \begin{enumerate}
       \item[(a)] $C$ is $\F_{q^m}$-linear and non-degenerate.
       \item[(b)] $n\leq m$ and $C$ is MRD.
       \item[(c)] $m<n\leq 2m-d$ and $C$ is $1$-BMD.
       \item[(d)] $m=n-1$, $k=n$ and $C$ is MRD.
   \end{enumerate}
\end{corollary}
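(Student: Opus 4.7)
The plan is to verify each of the four items by matching it against one of the prior results already proved in this section, and in one case to observe that the earlier proof carries over verbatim under a weaker hypothesis than the one stated.

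First I would dispatch (a), which is a direct restatement of Theorem \ref{thm:n/m_nondeg}, so there is nothing further to check. Item (b) follows from Proposition \ref{prop:pucturing}: when $m = n-1$ and $n \geq 2$, one has $\lceil n/m \rceil = \lceil n/(n-1) \rceil = 2$, and the proposition gives $\crit(\M) = 2$ for every non-degenerate $\F_q$-$[n \times (n-1), k]$ code (the additional assumption $k = n$ in (b) is harmless and is not used). Item (c) corresponds to the case $\lceil n/m \rceil = 1$, and Theorem \ref{prop:criticalMRD}(1) already establishes exactly this conclusion for $\F_q$-linear MRD codes with $n \leq m$, by invoking the equality of weight distributions across MRD codes with fixed parameters and applying Lemma \ref{lem:max_rank_weight} to an associated $\F_{q^m}$-linear MRD code.

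The remaining case (d) is the only one that requires explicit argument, and the point to extract is that Theorem \ref{prop:criticalMRD}(2) proves its upper bound on $\crit(\M)$ using only the conclusion of Lemma \ref{thm:supp}, which is itself stated under the $1$-BMD hypothesis (together with $m < n$ and $n + d \leq 2m$) rather than under the MRD hypothesis. Concretely, I would choose $\lceil n/m \rceil$ subspaces $U_1, \ldots, U_{\lceil n/m \rceil}$ of $\F_q^n$, each of dimension $m$, with $\sum_i U_i = \F_q^n$; for each $i$, Lemma \ref{thm:supp} applied to $U_i^{\perp}$ produces a codeword $X_i \in C$ with $\supp(X_i) = U_i$, so that $\sum_i \supp(X_i) = \F_q^n$ and hence $\crit(\M) \leq \lceil n/m \rceil$. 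The matching lower bound is Proposition \ref{prop:LB_critical_exponent}.

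The whole proof is therefore a bookkeeping exercise, and the only substantive observation is the one needed for (d): that the MRD hypothesis in Theorem \ref{prop:criticalMRD}(2) is stronger than what its proof actually uses, since everything passes through Lemma \ref{thm:supp}, which already holds for the $1$-BMD class. No separate argument or new lemma is required.
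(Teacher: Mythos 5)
Your proof is correct and tracks the paper's intent exactly: the paper presents this corollary as a summary of Theorem~\ref{thm:n/m_nondeg}, Proposition~\ref{prop:pucturing}, Theorem~\ref{prop:criticalMRD}, and Proposition~\ref{prop:LB_critical_exponent} without spelling out the details, and your bookkeeping fills them in faithfully, including the key observation that (d) follows because Lemma~\ref{thm:supp} is already proved for $1$-BMD codes (which are non-degenerate, being MRD or DQMRD) rather than just MRD codes. The one small point you leave implicit is that the lower bound from Proposition~\ref{prop:LB_critical_exponent} applies in case (d) precisely because $1$-BMD codes are non-degenerate, but this is immediate from Lemma~\ref{MRD properties}(1).
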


The conditions of Corollary \ref{cor:summary} are not necessary, as the following example illustrates.

\begin{example}\label{ex:4x3}
Let $C$ be the $\F_2$-$[4\times 2,3,1]$ rank-metric code, generated by the following matrices
\begin{align*}
    X_1:=\begin{pmatrix}
    1 & 0 \\
    0 & 0 \\
    0 & 1 \\
    0 & 1
    \end{pmatrix}, \   X_2:=\begin{pmatrix}
   0 & 1 \\
   1 & 0\\
   1 & 0\\
   0 & 1
    \end{pmatrix},  \   X_3:=\begin{pmatrix}
   1 & 0 \\
   1 & 0 \\
   1 & 1 \\
   0 & 1
    \end{pmatrix}.
\end{align*}
Note that $C$ is linear over $\F_2$, but not over $\F_{4}$, hence condition (a) is not satisfied. Clearly, also (b)--(d) are not satisfied. However, $\crit(\M[C])=2=n/m$, indeed $\colsp(X_1)+\colsp(X_2) = \F_2^4$.
\end{example}

It is known that if $n\leq m$, the $q$-polymatroid induced by an MRD code is the uniform $q$-matroid $\mathcal{U}_{n-d+1,n}(q)$ of rank $n-d+1$, where $d$ is the minimum rank distance of the code. However, this is not true for $n>m$; see \cite{gorla2019rank}.

\begin{corollary}\label{cor:uniformqmatroid}
Let $n \leq m$.
The uniform $q$-matroid $\U_{k,n}(q)$ is representable over $\F_{q^m}$ and has critical exponent $\crit(\U_{k,n}(q))=1$. 
\end{corollary}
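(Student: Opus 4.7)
The plan is to realize $\U_{k,n}$ as the $q$-matroid of an explicit MRD code and then read off the critical exponent directly from Theorem~\ref{prop:criticalMRD}. By Delsarte's existence theorem for MRD codes (see Lemma~\ref{MRD properties} and \cite{de78}), for every $n \leq m$ and every $1 \leq k \leq n$ there exists an $\F_{q^m}$-$[n,k,n-k+1]$ MRD code $C$. As recorded in the sentence immediately preceding the corollary (see also \cite{gorla2019rank}), whenever $n \leq m$ the $q$-matroid $\M[C]$ arising from such a code is precisely the uniform $q$-matroid of rank $n-d+1 = k$ on $\F_q^n$, that is, $\M[C] = \U_{k,n}$. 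This establishes the representability claim.

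For the critical exponent, since $C$ is MRD and satisfies $n \leq m$, case~(1) of Theorem~\ref{prop:criticalMRD} applies and gives $\crit(\M[C]) = \lceil n/m \rceil = 1$. Because $\crit$ is an invariant of lattice equivalence and $\M[C] = \U_{k,n}$, we conclude $\crit(\U_{k,n}) = 1$.

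There is no substantive obstacle here: both halves of the corollary are immediate consequences of results already assembled in Sections~\ref{sec:crit_theorem} and~\ref{sec:lower_bound}. The only point worth verifying (rather than proving afresh) is the identification $\M[C] = \U_{k,n}$ in the regime $n \leq m$; this follows from the fact that the weight distribution of an MRD code is determined by $q,n,m,k$ (Lemma~\ref{MRD properties}(2)), which forces $\dim C_U$ to depend only on $h(U)$ and hence $\rho(U) = k - \dim C_U = \min(h(U),k)$, matching the rank function of $\U_{k,n}$.
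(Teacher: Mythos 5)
Your proof takes the same route the paper intends: realize $\U_{k,n}$ as $\M[C]$ for an $\F_{q^m}$-$[n,k,n-k+1]$ MRD code with $n\leq m$ (existence by Delsarte, identification by the remark preceding the corollary and \cite{gorla2019rank}), then apply case (1) of Theorem~\ref{prop:criticalMRD} to get $\crit(\M[C])=\lceil n/m\rceil=1$. That part is correct and is essentially what the paper does implicitly.

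One caveat: the side-argument in your final paragraph is not valid as stated. Knowing the weight distribution of $C$ (i.e.\ the numbers $W_i(C)$) does \emph{not} in general determine $\dim C_U$ as a function of $h(U)$; two codes with identical weight distributions can have shortened subcodes $C_U$ of different dimensions for subspaces $U$ of the same dimension. What actually forces $\rho(U)=\min(h(U),k)$ for an MRD code with $n\leq m$ is the MRD property itself, not the weight distribution: e.g.\ one can argue via $\rho(U)=\rk(GA^U)$ for a generator matrix $G$, noting that every $k\times u$ submatrix of the form $GA^U$ has full rank $\min(u,k)$ because any nonzero codeword $xG$ has rank $\geq n-k+1 > n-k$, so $xGA^U=0$ would force $u\leq k-1$ linear conditions to annihilate $x$ only if $x$ had rank $\leq n-u<d$; alternatively, one invokes that shortening an MRD code by a $u$-dimensional space yields an MRD code of dimension $\max(0,k-u)$. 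Since you present this only as a verification sketch and your main line of reasoning correctly cites the known identification, the proof as a whole stands, but the weight-distribution justification should be replaced.
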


We point out the similarity between the critical exponent of the matroid induced by an MDS code and the one of the $q$-matroid induced by an $\F_{q^m}$-linear MRD code. 
In the matroid case, a uniform matroid $U_{k,n}$ that is representable over $\F_q$ has critical exponent $\crit(U_{k,n})\leq 2$.
In particular,
$$\crit(U_{k,n})=\begin{cases}
2 & \textnormal{ if } n=q+1 \textnormal{ and } k=2, \\
1 & \textnormal{ otherwise. }
\end{cases}$$

Simplex rank-metric codes have been recently defined as the natural counterpart of simplex Hamming-metric codes from a geometric point of view; see \cite{alfarano2021linear, randrianarisoa2020geometric}. In these works it has been shown that simplex rank-metric codes are the only non-degenerate one-weight
codes in the rank-metric, just like simplex codes in the Hamming metric, up to repetition. They are formally defined as follows.

\begin{definition}\label{def:simplex}
   Let $k\geq 2$ and $C$ be an $\F_{q^m}$-$[mk,k]$ non-degenerate code. Then $C$ is a one-weight code with minimum distance $m$ and it is called a \bf{simplex rank-metric} code. 
\end{definition}

As an immediate consequence of Proposition \ref{prop:LB_critical_exponent} we have the following result. 

\begin{corollary}\label{cor:critsimplex}
   Let $k\geq 2$, let $C$ be the $\F_{q^m}$-$[mk,k,m]$ simplex rank-metric code and let $\M=\M[C]$ be the $q$-matroid associated to $C$. Then $\crit(\M)=k$.
\end{corollary}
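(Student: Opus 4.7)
The plan is a straightforward sandwich argument using the two bounds on the critical exponent established earlier in the section.

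First, I would observe that the simplex code $C$ is non-degenerate by Definition \ref{def:simplex}, so Proposition \ref{prop:LB_critical_exponent} applies. Since $C$ has length $n = mk$, the lower bound yields
\[
\crit(\M) \;\geq\; \left\lceil \frac{mk}{m} \right\rceil \;=\; k.
\]

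For the reverse inequality, I would use that $C$ itself has $\F_{q^m}$-dimension $k$ and is non-degenerate, so $\supp(C) = \F_q^{mk}$. Any $\F_{q^m}$-basis $x_1, \ldots, x_k$ of $C$ therefore satisfies $\supp(\langle x_1, \ldots, x_k\rangle) = \F_q^{mk}$, which is precisely the condition that a $k$-tuple of codewords distinguishes $E$. Hence $\crit(\M) \leq k$.

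Combining the two inequalities immediately gives $\crit(\M) = k$. There is no real obstacle here: the entire content of the corollary is that the lower bound of Proposition \ref{prop:LB_critical_exponent} is attained by the trivial upper bound coming from the $\F_{q^m}$-dimension, which is possible precisely because for a simplex code the parameters $n$ and $mk$ coincide. One could equivalently deduce the result by a direct appeal to Theorem \ref{thm:n/m_nondeg}, applied with $n = mk$.
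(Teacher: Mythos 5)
Your proposal is correct and follows essentially the same approach as the paper, which states the corollary as an immediate consequence of Proposition~\ref{prop:LB_critical_exponent}: the lower bound $\left\lceil mk/m\right\rceil = k$ meets the upper bound coming from the dimension of $C$. Your observation that one could equally invoke Theorem~\ref{thm:n/m_nondeg} with $n=mk$ is also valid and consonant with the paper's reasoning.
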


Note that $U_{2,q+1}$ is representable over $\F_q$ as an $\F_q$-$[q+1,2]$ Hamming-metric code, satisfying the classical Singleton bound. This code is also a simplex code and in analogy with Corollary~\ref{cor:critsimplex}, its critical exponent is equal to the dimension of the code.

\section{Generalisations of the Critical Theorem}\label{sec:gen_britz}

In \cite{britz2005extensions}, the Critical Theorem was extended in order to describe, to the widest possible extent, the matroidal properties of a linear code.
If we consider the Critical Theorem (either for representable matroids or $q$-polymatroids), 
what is being counted is the number of $r$-tuples of codewords with a certain property, namely that the support of the code spanned by these codewords is equal to a given element $A$ of the underlying support lattice $\mL$, which is the Boolean lattice in the case of matroids and the subspace lattice in the case of $q$-polymatroids. 
The Critical Theorem shows that this quantity is determined by an evaluation of the characteristic polynomial of the associated ($q$)-(poly)matroid, and in particular is a function of the ranks of the elements in the interval $[A,\one]$. 
In revisiting this topic, in \cite{britz2005extensions} Britz considered the problem of counting more general objects arising from a linear code, and showed that if such {\em structures} exhibit a certain level of invariance, then the number of such structures with support equal to $A$ is also determined by the ranks of the elements in the interval  $[A,\one]$. A similar result can be stated for $q$-polymatroids.

Let $\mC$ be a multiset of elements of a set $S$. 
A structure of order $1$ over $\mC$ is a finite multiset or a finite tuple of elements of $\mC$. A structure of order $2$ over $\mC$ is a finite multiset or finite tuple of structures of order at most $1$ over $\mC$. We hence recursively construct a structure of order $\ell$ over $\mC$ as either a finite tuple or a finite multiset of structures, each of order at most $\ell-1$ over~$\mC$.
The ground set of $\mC$ is the set $\G(\mC):=S$. 
More generally, the ground-set of a structure~$\gamma$ over~$\mC$ is the set $\G(\gamma):=\cup_{\alpha \in \gamma} \G(\alpha)$.
A collection of structures over $\C$ may be defined in terms of a predicate $\X$, in which case we denote this set of structures
by $\X(\C)$.
Several examples of such structures are listed in \cite[Tables 1 and 2]{britz2005extensions}, for the case that $\mC$ is a multiset of elements of a union of linear codes, possibly over different finite fields. 

\begin{notation}
We adopt the following notation throughout this section.
\begin{itemize}
    \item $\mu$ denotes the M\"obius function on $\mL(E)$. 
    \item $k:=\rho(E)$.
    \item $C^{(i)}$ denotes an arbitrary but fixed $\Fq$-$[n \times m_i,k_i]$ rank-metric code, for each $i \in [s]$.
    \item For each $U \in \mL(E)$, $U^\perp$ denotes the image of $U$ under an arbitrary but fixed
    anti-automorphism of $\mL(E)$. 
    \item $\M[C^{(i)}]=(\mL(E),\rho_i)$, i.e., $\rho_i(U):=\dim_{\F_q}(C^{(i)})-\dim_{\F_q}(C^{(i)}_U)$, for each $i \in [s]$.
\end{itemize}
We assume that $\M[C^{(1)}], \ldots, \M[C^{(s)}]$ are all scaling-equivalent to the $(q,m)$-polymatroid $\M=(\mL(E),\rho)$. That is, we assume that 
for each $i \in [s]$, there exists a
positive $\ell_i\in \Q$ such that $\rho(U) = \ell_i\rho_i(U)$ for each $U \in \mL(E)$.
This means in particular that $k = \ell_i k_i$ for each $i \in [s]$.
\end{notation}

We will be concerned with structures over $\mC$ for which 
$\mC=(C^{(1)},\dots,C^{(s)})$.

\begin{definition}
We say that the predicate $\X$ is {\bf invariant} if 
the cardinality of $\X((C^{(1)},\dots,C^{(s)}))$ depends only on $((m_1,\ell_1),\ldots,(m_s,\ell_s))$ and $k$. If $\X$ is invariant, we denote this cardinality by\\ $\Theta(\X;((m_i,\ell_i): i \in [s]);k)$.
If $(m_i,\ell_i) = (N,L)$ for some positive integers $N,L$ for each $i \in [s]$, 
we will use the notation $\Theta(\X;(N,L);k)$.
\end{definition}

For any structure $\gamma$ over the multiset $\mC=(C^{(1)},\ldots,C^{(s)})$ we define the \textbf{support} of $\gamma$ to be 
$\supp(\gamma):=\sum_{X \in \G(\gamma)} \colsp(X)$. In other words, the support of $\gamma$ is the sum of the column spaces of the elements of the ground set of $\gamma$.

\begin{lemma}\label{lem:shosub}
    Let $C$ be an $\Fq$-$[n \times m,k]$ code and let $\M=\M[C]$.
    Let $U\in \mL(E)$.
    Then $M[C_U]=(\mL(E),\rho_U)$, where $\rho_U(V):=\rho(U+V)-\rho(U)$ for all $V \in \mL(E)$.
\end{lemma}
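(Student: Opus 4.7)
The plan is to unpack both sides of the claimed equality directly from the definition of the $q$-polymatroid associated with a matrix code. Writing $M[C_U] = (\mL(E), \rho')$, I must show that $\rho'(V) = \rho(U+V) - \rho(U)$ for every $V \in \mL(E)$.

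First I would compute $(C_U)_V$ explicitly. By Definition \ref{def:codepoly} applied to the code $C_U$, we have
\[
(C_U)_V = \{X \in C_U : \supp(X) \leq V^\perp\} = \{X \in C : \supp(X) \leq U^\perp \text{ and } \supp(X) \leq V^\perp\}.
\]
Using the fact that $U \mapsto U^\perp$ is a lattice anti-automorphism of $\mL(E)$ (so $U^\perp \wedge V^\perp = (U \vee V)^\perp = (U+V)^\perp$), this set simplifies to $C_{U+V}$. The key identity is therefore $(C_U)_V = C_{U+V}$.

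Next I would apply the definition of $\rho'$. Since $\dim_{\F_q}(C_U) = k - \rho(U)$ and $\dim_{\F_q}(C_{U+V}) = k - \rho(U+V)$ by the definition of $\rho = \rho_C$, I get
\[
\rho'(V) = \dim_{\F_q}(C_U) - \dim_{\F_q}((C_U)_V) = (k - \rho(U)) - (k - \rho(U+V)) = \rho(U+V) - \rho(U),
\]
which is exactly $\rho_U(V)$. That $(\mL(E), \rho_U)$ is a $(q,m)$-polymatroid is automatic because it equals $M[C_U]$, which is a $(q,m)$-polymatroid by \cite[Theorem 5.3]{gorla2019rank}.

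There is no real obstacle here: the entire argument hinges on the identity $(C_U)_V = C_{U+V}$, which in turn reduces to the anti-automorphism property $U^\perp \wedge V^\perp = (U + V)^\perp$ of the orthogonal complement on $\mL(E)$. Everything else is a one-line manipulation of dimensions.
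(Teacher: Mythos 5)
Your proof is correct and follows essentially the same route as the paper: both hinge on the identity $(C_U)_V = C_{U+V}$, deduced from $U^\perp \cap V^\perp = (U+V)^\perp$, followed by a direct dimension count. The paper's proof is identical in substance, just slightly more terse.
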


\begin{proof}
    We have $\rho_U(V) = \dim(C_U)-\dim((C_U)_V)$ for all $V\in \mL(E)$ by definition. Since 
    \[
    (C_U)_V = \{X \in C : \colsp(X) \leq U^\perp \cap V^\perp \} = \{X \in C : \colsp(X) \leq (U+V)^\perp \} = C_{U+V},
    \]
    we get
    $\rho_U(V)=\dim(C_U)-\dim(C_{U+V}) = k-\rho(U)-(k-\rho(U+V))= \rho(U+V)-\rho(U).$
\end{proof}

\begin{lemma}\label{lem:inv}
    Let $\X$ be an invariant predicate and let $U \in \mL(E)$. Then
    \[
  |\{ \gamma \in \X(C^{(1)}_U,\ldots,C^{(s)}_U)\}|
  =\Theta(\X;((m_i,\ell_i):i \in [s]);k-\rho(U)).
   \]
\end{lemma}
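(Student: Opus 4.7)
The plan is to apply the invariance hypothesis on $\X$ directly to the tuple $(C^{(1)}_U,\ldots,C^{(s)}_U)$, once the correct parameters have been identified. The task therefore reduces to showing that the tuple of restricted codes exhibits the same $(m_i,\ell_i)$ data as the original, but with the top-rank value reduced from $k$ to $k-\rho(U)$.

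First, I would invoke Lemma \ref{lem:shosub} to describe each restricted $q$-polymatroid. This yields $\M[C^{(i)}_U]=(\mL(E),\rho_{i,U})$ with $\rho_{i,U}(V)=\rho_i(U+V)-\rho_i(U)$ for every $V\in\mL(E)$, and in particular the top rank equals $\rho_{i,U}(E)=k_i-\rho_i(U)$. Since $C^{(i)}_U\leq C^{(i)}\leq\F_q^{n\times m_i}$, the column-dimension $m_i$ is unaffected by the restriction.

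Next, I would check that the scaling data is preserved. The assumption $\rho=\ell_i\rho_i$ gives
\[
\ell_i\rho_{i,U}(V)=\ell_i\rho_i(U+V)-\ell_i\rho_i(U)=\rho(U+V)-\rho(U),
\]
and this right-hand side does not depend on $i$. Writing $\rho_U(V):=\rho(U+V)-\rho(U)$, we conclude that $\M[C^{(1)}_U],\ldots,\M[C^{(s)}_U]$ are all scaling-equivalent to the common weighted lattice $(\mL(E),\rho_U)$ via the same scalars $\ell_i$ used originally, and that this common polymatroid has top rank $\rho_U(E)=k-\rho(U)$.

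At this point the hypothesis of invariance applies verbatim to the tuple $(C^{(1)}_U,\ldots,C^{(s)}_U)$ with unchanged pairs $(m_i,\ell_i)$ and top rank $k-\rho(U)$, yielding
\[
|\X(C^{(1)}_U,\ldots,C^{(s)}_U)|=\Theta(\X;((m_i,\ell_i):i\in[s]);k-\rho(U)).
\]
The only real content lies in verifying the scaling identity above; once that linear computation is done the conclusion is forced by the definition of invariance, so I would not expect any deeper obstacle.
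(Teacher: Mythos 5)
Your proof is correct and follows essentially the same route as the paper: apply Lemma \ref{lem:shosub} to identify $\M[C^{(i)}_U]$ with rank function $\rho_{i,U}$, verify that the same scalars $\ell_i$ give scaling-equivalence to the common $(\mL(E),\rho_U)$ with top rank $k-\rho(U)$, and then invoke invariance of $\X$. The explicit remark that the column-dimensions $m_i$ are unchanged is a small additional clarification left implicit in the paper.
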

\begin{proof}
From Lemma \ref{lem:shosub}, for each $i \in [s]$, $\M[C^{(i)}_U]=(\mL(E),\rho_{i,U})$ where
$\rho_{i,U}(V)=\rho_i(U+V)-\rho_i(U)$.
Let $\M_U=(\mL(E),\rho_U)$, be the $q$-polymatroid for which
$\rho_U(V):=\rho(U+V)-\rho(U)$ for all $V \in \mL(E)$.
Then for each $i \in [s]$, $\ell_i \rho_{i,U}(V) =\rho_U(V)$ for all $V \in \mL(E)$ and hence $\M[C^{(i)}_U]$ is scaling-equivalent to $\M_U$ and $\ell_i \rho_{i,U}(E)=\rho_U(E)=k-\rho(U)$.
Since $\X$ is invariant, the result now follows. 
\end{proof}
The following is an extension of the Critical Theorem. We will apply it to counting the number of structures over $\mC$ whose support is equal to a fixed element of $\mL(E)$. 

\begin{theorem}\label{th:gencrit}  
   Let $\X$ be an invariant predicate and let $\mC=(C^{(1)}, \ldots , C^{(s)})$. Then for every $U \leq E$, we have:
   \[
   | \{\gamma \in \X(\mC): \supp(\gamma)=U^\perp \}| = \sum_{V \in [U,E]}\mu(U,V)\Theta(\X;((m_i,\ell_i):i \in [s]);k-\rho(V)).
   \]
\end{theorem}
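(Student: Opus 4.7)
The plan is to mimic the proof of Theorem~\ref{th:critextension} (the basic Critical Theorem) via Möbius inversion on $\mL(E)$, using Lemma~\ref{lem:inv} as the key input that counts ``supports contained in a given subspace''.

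First I would define, for $U \in \mL(E)$,
\[
f(U) := |\{\gamma \in \X(\mC) : \supp(\gamma) = U^\perp\}|, \qquad g(U) := |\{\gamma \in \X(\mC) : \supp(\gamma) \leq U^\perp\}|.
\]
Since $V \mapsto V^\perp$ is an anti-automorphism of $\mL(E)$, the condition $\supp(\gamma) \leq U^\perp$ is equivalent to saying that $\supp(\gamma) = V^\perp$ for some $V \in [U,E]$, so partitioning according to the exact value of $V := \supp(\gamma)^\perp$ gives
\[
g(U) = \sum_{V \in [U,E]} f(V).
\]

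Next I would identify $g(U)$ with a count that Lemma~\ref{lem:inv} controls. Because $\supp(\gamma) = \sum_{X \in \G(\gamma)} \supp(X)$, the condition $\supp(\gamma) \leq U^\perp$ is equivalent to $\supp(X) \leq U^\perp$ for every $X \in \G(\gamma)$, i.e.\ every element of $\G(\gamma)$ coming from $C^{(i)}$ lies in $C^{(i)}_U$. Since a structure is recursively built from tuples/multisets of elements of the codes, this translates without loss into $\gamma \in \X(C^{(1)}_U, \dots, C^{(s)}_U)$. Invoking Lemma~\ref{lem:inv} then yields
\[
g(U) = \Theta(\X;((m_i,\ell_i):i \in [s]);k-\rho(U)).
\]

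Finally, applying the Möbius Inversion Formula to the relation $g(U) = \sum_{V \in [U,E]} f(V)$ gives
\[
f(U) = \sum_{V \in [U,E]} \mu(U,V)\, g(V) = \sum_{V \in [U,E]} \mu(U,V)\, \Theta(\X;((m_i,\ell_i):i \in [s]);k-\rho(V)),
\]
which is the claimed identity.

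The main point to check carefully, and in my view the only nontrivial step, is the reduction of $g(U)$ to $|\X(C^{(1)}_U,\ldots,C^{(s)}_U)|$: one must verify that the invariance hypothesis on $\X$, combined with the recursive definition of a structure over a multiset, ensures that ``$\supp(\gamma) \leq U^\perp$'' is the same predicate as ``$\gamma$ is a structure over the truncated codes $C^{(i)}_U$.'' Once this is granted, the computation is a textbook Möbius inversion, exactly parallel to the proof of Theorem~\ref{th:critextension}.
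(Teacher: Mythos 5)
Your proposal is correct and follows essentially the same route as the paper: define the ``exact support'' and ``contained support'' counts, identify the latter with $|\X(C^{(1)}_U,\ldots,C^{(s)}_U)|$ (controlled by Lemma~\ref{lem:inv}), and finish with M\"obius inversion on $\mL(E)$. You also rightly flag the reduction of $g(U)$ to a count of structures over the truncated codes as the only step needing real care, which is precisely what the paper's proof spends most of its space justifying.
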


\begin{proof}
   Let $W\in \mL(E)$.
   By definition, for any $\gamma \in \X(\mC)$ 
   we have that $\supp(\gamma) = \sum_{X \in \G(\gamma)} \colsp(X)$. Therefore, 
   we have that $\supp(\gamma)\leq W^\perp$ if and only if $\colsp(X) \leq W^\perp$
   for each $X \in \G(\gamma)$. In particular, we have that a structure over
   $\mC$ has support contained in $W^\perp$ if and only if it is a structure
   over $(C^{(1)}_W , \ldots , C^{(s)}_W)$. For each $i$, we have that
   $M[C^{(i)}_W] = (\mL(E),\rho_{i,W})$ where $\ell_i\rho_{i,W}(Z)=\rho_W(Z)$ for all $Z \in \mL(E)$.
   Also, $\dim(C^{(i)}_W) = k_i-\rho_i(W)=\ell_i^{-1}(k-\rho(W))$.
   By the invariance of $\X$, from Lemma \ref{lem:inv} we have that
   $|\{ \gamma \in \X(C^{(1)}_W, \ldots, C^{(s)}_W)\}| 
   = \theta(\X;((m_i,\ell_i):i \in [s]);k-\rho(W))$.
   It follows that for all $U \in \mL( E)$, we have: 
   \[
   |\{\gamma \in \X(\mC): \supp(\gamma)\leq U^\perp \}| = \sum_{V \in [U,E]} \Theta(\X;((m_i,\ell_i):i \in [s]);k-\rho(V)).
   \]
   The result now follows by applying M\"obius inversion.
\end{proof}

We obtain other generalisations of the Critical Theorem by appropriate specialisations of Theorem \ref{th:gencrit}. We mention a few here. 

\begin{enumerate}
\item[\textbf{1.}]
We retrieve Theorem \ref{th:critextension} by setting $\X(\mC) = \{ (X_1,\ldots,X_s) : X_i \in C\}$, which is a collection of structures of order $1$ over $C$ for some $\Fq$-$[n \times m,k]$ rank-metric code $C$. Then
\begin{align*}
    \Theta(\X;(m,1);k-\rho(U))&=|\{(X_1,\ldots,X_s): X_i \in C, \colsp(X_i)) \leq U^\perp,i \in [s]\}|
    \\ &=|C_U|^s = q^{s(k-\rho(U))}.
\end{align*}
\item[\textbf{2.}]
If we set $\X(\mC) = \{ (X_1,\ldots,X_s): X_i \in C^{(i)},\; i \in [s]\}$, then we retrieve a $q$-analogue of \cite[Theorem 4.3]{kung95}. 
We have 
$$\Theta(\X;((m_i,\ell_i): i \in [s]);k-\rho(U))=
|\{(X_i: i \in [s]):\colsp(X_i) \leq U^\perp\}|=\prod_{i=1}^s|C^{(i)}_U|.$$
This yields that:
\begin{align*}
   |\{\gamma \in \X(\mC):\supp(\gamma) = U^\perp\}| 
&= \sum_{V\in [U,E]} \mu(U,V) \prod_{i=1}^s|C^{(i)}_V| \\
&=\sum_{V\in [U,E]} \mu(U,V)\prod_{i=1}^s q^{\ell_i^{-1}(k-\rho(V))}.
\end{align*}

\item[\textbf{3.}]
Let $\X(\mC) = \{(D_1,\dots,D_s) : D_i \leq C^{(i)},\dim(D_i)=d_i\; i \in [s] \}$. That is, $\X(\mC)$ is the structure of $s$-tuples of $\Fq$-subcodes of $C^{(i)}$ of dimension $d_i$. Then
$$\Theta(\X;((m_i,\ell_i): i \in [s]);k-\rho(U))=
|\{\gamma \in \X(\mC):\supp(\gamma) \leq U^\perp\}|=\prod_{i=1}^s\qbin{k_i-\rho_i(U)}{d_i}{q},$$
and so
$$|\{\gamma \in \X(\mC):\supp(\gamma) = U^\perp\}|=\sum_{V\in [U,E]} \mu(U,V)\prod_{i=1}^s\qbin{\ell_i^{-1}(k-\rho(V))}{d_i}{q}. $$
This gives a $q$-analogue of \cite[Corollary 7]{britz2005extensions}.
\end{enumerate}

We illustrate how to compute the number of pairs $(X_1, X_2)$ of matrices, such that $X_1\in C^{(1)}$ and $X_2\in C^{(2)}$, with $\colsp(X_1)+\colsp(X_2)$ equal to a fixed space. 
\begin{example}\label{ex:1}
    Let $q=2$ and $C^{(1)}$ be the $\F_2$-$[3\times 3,3]$ with basis
    $$\begin{pmatrix}
        1 & 0 & 0 \\ 
        1 & 0 & 0 \\
        1 & 1 & 0 
    \end{pmatrix}, \qquad \begin{pmatrix}
        0 & 1 & 0 \\
        0 & 1 & 1 \\
        1 & 1 & 0
    \end{pmatrix}, \qquad \begin{pmatrix}
        0 & 0 & 1 \\
        1 & 1 & 0 \\
        0 & 1 & 0 
    \end{pmatrix}.$$
      Let $C^{(2)}$ be the $\F_2$-$[3\times 3,3]$ with basis
    $$\begin{pmatrix}
        1 & 0 & 1 \\ 
        0 & 0 & 1 \\
        0 & 1 & 0 
    \end{pmatrix}, \qquad \begin{pmatrix}
        0 & 1 & 1 \\
        0 & 0 & 1 \\
        0 & 0 & 1
    \end{pmatrix}, \qquad \begin{pmatrix}
        0 & 0 & 0 \\
        0 & 1 & 0 \\
        1 & 1 & 1 
    \end{pmatrix}.$$
    Let $\rho_i$ be the rank function of the $q$-polymatroid arising from $C^{(i)}$, for $i=1,2$ and $k_i$ be its rank.
    Let $U = \langle (1,1,1)\rangle\leq \F_2^3$. Then $U^\perp = \langle (1,0,1), (0,1,1) \rangle$
    and we have the following:
    \begin{align*}
        |\{(X_1,X_2) \; : \; X_i\in C^{(i)}, \; \colsp(X_1)+ \colsp(X_2)= U^\perp\}| &=\sum_{U\leq V\leq \F_2^3}\mu(U,V)\prod_{i=1}^2q^{k_i-\rho_i(V)}\\
        &=2q - \sum_{U< V<\F_2^3}q^{\binom{1}{2}}\prod_{i=1}^2q^{k_i-\rho_i(V)} \\
        &=2q-3 = 1.
    \end{align*}
    Indeed, it is not difficult to see that the only matrix in $C^{(1)}$ whose column space is contained in $U^\perp$ is the zero matrix, while there is a matrix $N=\begin{pmatrix}
        1 & 1 & 0\\
        0 & 1 & 0 \\
        1 & 0 & 0
    \end{pmatrix}\in C^{(2)}$, whose support is equal to $U^\perp$. Hence, the only pair is given by $(0,N)$.
\end{example}
The following is an example of a structure of order $2$.
\begin{example}
    Let $C^{(1)}$ and $C^{(2)}$ be the codes from Example \ref{ex:1}. Let 
    $$\X(\C)=\{((M_1,M_2), (N_1,N_2)) \; : \; M_i\in C^{(1)}, \; N_i\in C^{(2)}, \; i=1,2\}.$$
    That is, $\X(\C)$ is the structure of pairs of \textit{pairs} of codewords of $C^{(1)}$ and $C^{(2)}$.  Let $U = \langle (1,1,1)\rangle\leq \F_2^3$. Then $U^\perp = \langle (1,0,1), (0,1,1) \rangle$
    and we have the following:
    \begin{align*}
 |\{\gamma \in \X(\mC):\supp(\gamma) = U^\perp\}| 
&= \sum_{V\in [U,E]} \mu(U,V) |C^{(1)}_V|^2|C^{(2)}_V|^2 \\   &=\sum_{V\in [U,E]} \mu(U,V) q^{2(k_1-\rho_1(V))}q^{2(k_2-\rho_2(V))} = q^2 -3 +q = 3. \end{align*}
\end{example}

\bigskip

\section*{Acknowledgments}
The authors are thankful to the anonymous referees for their careful reading and constructive comments. In particular, their comments led to a correction in the proof of Proposition 4.9. 
G.~N.~A. was supported by the Swiss National Foundation through grant no. 210966.


\bigskip
\bibliographystyle{alphaurl}
\bibliography{references.bib}

\bigskip
\bigskip

\newpage
\appendix
\section{}\label{appendix}

In Figure \ref{fig:qpolylattice} we illustrate the minors described in Example \ref{ex:qpolylattice}, along with their characteristic polynomials.
\begin{figure}[ht!]
\centering
\begin{tabular}{|c|c|}
\hline
& \\
   \includegraphics{q-polylattice.pdf} & \includegraphics{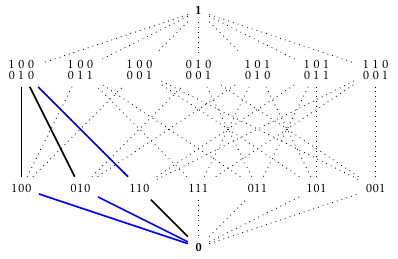}\\
   ${\mathbb P}(M;z)=z^5-2z^3-5z^2+6z$ &
   ${\mathbb P}(M|_H;z)=z^5-z^3-2z^2+2$ \\
   & \\
   \hline
   & \\
   \includegraphics{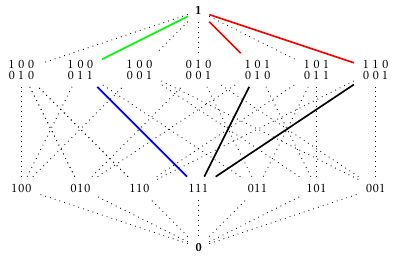}  &  \includegraphics{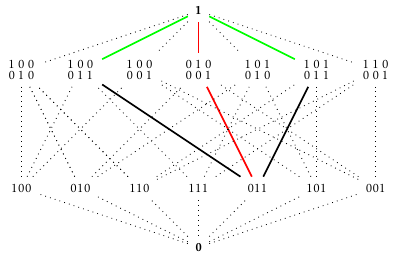}\\
   ${\mathbb P}(M/\langle 111 \rangle;z)=z^3-2z+1$&
   ${\mathbb P}(M/\langle 011 \rangle;z)=z^2-z$\\
   & \\
   \hline
   & \\
   \includegraphics{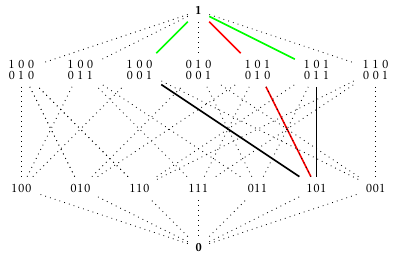}  &
   \includegraphics{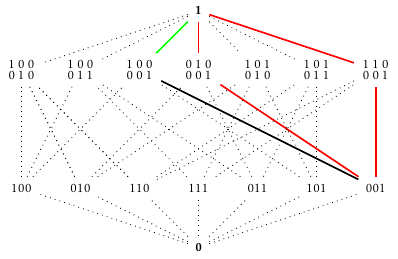}\\
   ${\mathbb P}(M/\langle 101 \rangle;z)=z^2-z$ &
   ${\mathbb P}(M/\langle 001 \rangle;z)=z^2-2z+1$\\
   & \\
   \hline
\end{tabular}
\caption{Minors of a $(2,3)$-polymatroid and their characteristic polynomials.}
 \label{fig:qpolylattice}
\end{figure}

\newpage

In Table \ref{tab:comparison_bounds} we provide some examples (computed using the \textsc{magma} software package \cite{magma}) to compare the lower bound that we provided in Proposition \ref{prop:LB_critical_exponent} with the actual critical exponent (CE) of some $q$-polymatroids induced by rank-metric codes, computed with the aid of the \textsc{magma} software package as evaluation of the characteristic polynomial. 

\bigskip 

\small{
\begin{table}[ht!]
\setlength\extrarowheight{5pt}
\begin{tabular}{|c|c|c|c|}
\hline
\textbf{$[n\times m,k,d]_q$} & \textbf{Code} & \textbf{$\left\lceil \frac{n}{m}\right\rceil$} & \textbf{CE} \\ \hline

 $\F_2$-$[6\times 3,5,2]$  & \begin{tabular}[c]{@{}c@{}}
 \tiny{$\Biggl\langle\begin{pmatrix}1 & 0 & 0 \\ 0 & 1 & 0 \\ 0 & 0 & 0 \\ 0 & 0 & 0 \\0 & 0 & 0 \\0 & 0 & 0  \end{pmatrix},
 \begin{pmatrix}0 & 0 & 0 \\ 0 & 1 & 0 \\ 0 & 0 & 1 \\ 0 & 0 & 0 \\ 0 & 0 & 0 \\ 0 & 0 & 0 \\  \end{pmatrix},
 \begin{pmatrix} 0 & 1 & 1 \\ 0 & 0 & 1 \\ 0 & 0 & 0 \\ 0 & 0 & 0 \\ 0 & 0 & 0 \\ 0 & 0 & 0 \\  \end{pmatrix},$
 $ \begin{pmatrix} 0 & 0 & 0 \\ 1 & 0 & 0 \\ 1 & 1 & 0 \\ 0 & 0 & 0 \\ 0 & 0 & 0 \\ 0 & 0 & 0 \end{pmatrix}, 
 \begin{pmatrix} 0 & 0 & 0 \\ 0 & 0 & 0 \\ 0 & 0 & 0 \\ 1 & 0 & 0 \\ 0 & 1 & 0 \\ 0 & 0 & 1  \end{pmatrix}\Biggr\rangle$} 
\end{tabular}  & $2$   & $3$   \\ \hline
$\F_2$-$[5\times 5,6,3]$   & \begin{tabular}[c]{@{}c@{}} \tiny{$\Biggl\langle\begin{pmatrix} 0 &1& 1 & 0 & 1 \\0 & 1 & 0 & 1 & 1 \\ 0 & 1 & 0 & 0 & 0 \\ 0& 0 & 1 & 1 & 0  \\ 0 & 0 & 0 & 0 & 0  \end{pmatrix},
\begin{pmatrix}0 & 0 & 0 & 0 & 0\\0 & 0 & 0 & 0 & 0\\ 0 & 0 & 0 & 0 & 1 \\ 0 & 0 & 0 & 1 & 0\\ 0 & 0 & 1 & 0 & 0\\  \end{pmatrix},
\begin{pmatrix}0 & 0 & 0 & 0 & 0\\ 0 & 0 & 0 & 0 & 1\\ 0 & 1 & 0 & 0 & 0 \\ 0 & 1 & 0 & 0 & 1\\ 0 & 0 & 1 & 1 & 1\\  \end{pmatrix},$\vspace{0.2cm}}\\
\tiny{$\begin{pmatrix} 0 & 0 & 0 & 1 & 1\\0 & 0 & 1& 0 & 0\\ 0 & 1 & 0 & 1 & 0 \\ 0 & 1 & 0 & 0 & 1\\ 0 & 0 & 1 & 0 & 0\\  \end{pmatrix},
\begin{pmatrix} 0 & 0 & 0 & 0 & 0\\ 0 & 1 & 1 & 1 & 0\\ 0 & 0 & 0 & 0 & 0 \\ 0 & 1 & 0 & 0 & 1\\ 0 & 1 & 0 & 1 & 0\\  \end{pmatrix},
\begin{pmatrix} 0 & 0 & 1 & 0 & 1\\ 0 & 0 & 1 & 0 & 0\\ 0 & 0 & 0 & 1 & 1 \\ 0 & 0 & 1 & 1 & 0\\ 0 & 0 & 1 & 1 & 0\\  \end{pmatrix}\Biggr\rangle$}
\end{tabular}  & $1$    & $2$   \\ \hline 

$\F_2$-$[4\times 4,5,3]$ &  \begin{tabular}[c]{@{}c@{}}\tiny{$\Biggl\langle\begin{pmatrix} 0 & 0 & 0  & 1 \\ 0 & 1 & 1 & 1 \\ 1 & 1 & 1 & 1 \\ 0 & 1 & 1 & 0 \end{pmatrix},
\begin{pmatrix}
1 & 1 & 0 & 0 \\ 1 & 0 & 0 & 1 \\ 0 & 0 & 1 & 0 \\ 0 & 0 & 1 & 0
\end{pmatrix},
\begin{pmatrix}
0 & 1 & 1 & 0 \\ 1 & 1 & 1 & 1 \\ 0 & 1 & 1 & 0 \\ 1 & 0 & 0 & 0
\end{pmatrix},$\vspace{0.2cm}}\\
\tiny{$\begin{pmatrix}
1 & 0 & 0 & 0 \\ 1 & 0 & 0 & 1 \\ 1 & 1 & 1 & 0 \\ 0 & 1 & 1 & 1 
\end{pmatrix},
\begin{pmatrix}
0 & 1 & 0 & 1 \\
1 & 0  &0 & 1 \\ 0 & 1 & 1 & 1 \\ 1 & 1 & 0 & 0
\end{pmatrix}\Biggr\rangle$}
\end{tabular}& $1$ & $2$\\\hline

$\F_2$-$[5\times 4,15,1]$ &\begin{tabular}[c]{@{}c@{}}\tiny{$\Biggl\langle\begin{pmatrix} 0 & 0 & 0 & 0  \\ 0 & 0 & 0  & 1 \\ 0 & 1 & 1 & 1 \\ 1 & 1 & 1 & 1 \\ 0 & 1 & 1 & 0 \end{pmatrix},
\begin{pmatrix}
0 & 0 & 0 & 0  \\ 1 & 1 & 0 & 0 \\ 1 & 0 & 0 & 1 \\ 0 & 0 & 1 & 0 \\ 0 & 0 & 1 & 0
\end{pmatrix},
\begin{pmatrix}
0 & 0 & 0 & 0  \\ 0 & 1 & 1 & 0 \\ 1 & 1 & 1 & 1 \\ 0 & 1 & 1 & 0 \\ 1 & 0 & 0 & 0
\end{pmatrix},$\vspace{0.2cm}}\\
\tiny{$
\begin{pmatrix}
0 & 0 & 0 & 0  \\ 1 & 0 & 0 & 0 \\ 1 & 0 & 0 & 1 \\ 1 & 1 & 1 & 0 \\ 0 & 1 & 1 & 1 
\end{pmatrix},
\begin{pmatrix}
0 & 0 & 0 & 0  \\ 0 & 1 & 0 & 1 \\
1 & 0  &0 & 1 \\ 0 & 1 & 1 & 1 \\ 1 & 1 & 0 & 0
\end{pmatrix}\Biggr\rangle^\perp$}
\end{tabular} & $2$ & $2$
\\
\hline
 $\F_2$-$[6\times 3,4,2]$  & \begin{tabular}[c]{@{}c@{}}
 \tiny{$\Biggl\langle\begin{pmatrix}1 & 0 & 0 \\ 0 & 0 & 0 \\ 0 & 1 & 1 \\ 0 & 0 & 0 \\0 & 0 & 0 \\0 & 0 & 0  \end{pmatrix},
 \begin{pmatrix}0 & 0 & 0 \\ 1 & 0 & 0 \\ 1 & 0 & 1 \\ 0 & 0 & 0 \\ 0 & 0 & 0 \\ 0 & 0 & 0 \\  \end{pmatrix},
 \begin{pmatrix} 0 & 0 & 0 \\ 0 & 0 & 1 \\ 1 & 1 & 1 \\ 0 & 0 & 0 \\ 0 & 0 & 0 \\ 0 & 0 & 0 \\  \end{pmatrix},$
 $ 
 \begin{pmatrix} 0 & 0 & 0 \\ 0 & 0 & 0 \\ 0 & 0 & 0 \\ 1 & 0 & 0 \\ 0 & 1 & 0 \\ 0 & 0 & 1  \end{pmatrix}\Biggr\rangle$}
\end{tabular}  & $2$   & $3$   \\ 
\hline\hline
\end{tabular}
\vspace*{5mm}
\caption{Critical exponents of some representable $q$-polymatroids}
 \label{tab:comparison_bounds}
\end{table}
}

\end{document}